\documentclass[reqno,english,11pt]{amsart}
\usepackage{amsmath,amsfonts,amssymb,graphicx,amsthm,url}
\usepackage[noadjust]{cite}
\usepackage{stmaryrd}
\usepackage{mathrsfs,booktabs,tabularx}
\usepackage{xifthen,xcolor,tikz,setspace}
\usetikzlibrary{decorations.pathmorphing,patterns,shapes,calc,decorations}
\usetikzlibrary{decorations.pathreplacing}
\usepackage{mathtools,upgreek,paralist}
\usepackage[final]{showkeys} 

\definecolor{refkey}{gray}{.75}
\definecolor{labelkey}{gray}{.5}
\usepackage[algo2e,boxed,vlined,algoruled]{algorithm2e}
\usepackage{comment}
\usepackage[shortlabels]{enumitem}

\usepackage[letterpaper]{geometry}
\usepackage[colorinlistoftodos]{todonotes}
\presetkeys{todonotes}{inline, color=green}{}

\usepackage{accents}
\usepackage[algo2e,boxed,vlined,algoruled]{algorithm2e}

\usepackage[small]{caption}
\usepackage[colorlinks=true]{hyperref}
\colorlet{DarkGreen}{green!50!black}
\colorlet{DarkGray}{gray!60!black}

\numberwithin{equation}{section}

\renewcommand{\epsilon}{\varepsilon}

\usepackage{color}
 \definecolor{refkey}{gray}{.5}
 \definecolor{labelkey}{gray}{.5}
\definecolor{light}{gray}{.9}

\usepackage{soul}
\usepackage{float} 

\numberwithin{equation}{section}

\newtheorem{theorem}{Theorem}[section]
\newtheorem{corollary}[theorem]{Corollary}
\newtheorem{lemma}[theorem]{Lemma}
\newtheorem{proposition}[theorem]{Proposition}

\theoremstyle{definition}
\newtheorem{definition}[theorem]{Definition}
\newtheorem{remark}[theorem]{Remark}

\DeclareMathOperator{\diameter}{diam}
\DeclareMathOperator{\card}{Card}
\DeclareMathOperator{\LE}{LE}
\DeclareMathOperator{\SLE}{SLE}
\DeclareMathOperator{\lv}{lv}
\DeclareMathOperator{\disc}{Dis}

\newcommand{\C}{\mathbb{C}}

\newcommand{\Z}{\mathbb{Z}}
\newcommand{\N}{\mathbb{N}}

\newcommand{\dis}{^{\#\delta}}

\title{Universality for the 2D Random Walk Loop Soup\\
	{}}
\author{Yihao Pang}
\address{Université Paris Cité and Sorbonne Université, CNRS, Laboratoire de Probabilités, Statistique et Modélisation, F-75013 Paris, France}
\email{ypang@lpsm.paris}

\date{\today}
\begin{document}
    \begin{abstract}
	   We show that the scaling limit of the random walk loop soup on suitable planar graphs is the Brownian loop soup, under a topology on multisets of unrooted, unparameterized, and macroscopic loops. The result holds assuming only convergence of simple random walk to Brownian motion, a Russo-Seymour-Welsh type crossing estimate, and the bounded density of the graphs. The proof relies on Wilson's algorithm and Schramm's finiteness theorem. Precisely, we approximate the random walk loop soup by the set of loops erased in a greedy variant of Wilson's algorithm, thereby establishing convergence. The resulting limit is identified using the result of Lawler and Trujillo Ferreras \cite{lawler2004randomwalkloopsoup}. 
    \end{abstract}
    \maketitle

\section{Introduction}
The Brownian loop soup (BLS), introduced by Lawler and Werner~\cite{lawler2003brownianloopsoup}, is a fundamental object in random conformal geometry. A BLS in a domain $D$ is a random collection of Brownian-type loops in $D$, which can be viewed as a Poisson point process in the space of unrooted loops, with intensity called the Brownian loop measure. 
It satisfies both conformal invariance and a restriction property and is closely connected to other conformally invariant random processes such as Schramm-Loewner evolution (SLE), the Conformal loop ensemble (CLE), and the Gaussian free field. In particular, the cluster of a Brownian loop soup was used to construct the CLE~\cite{werner2003slesboundariesclustersbrownian,sheffield2011conformalloopensemblesmarkovian}.

A discrete analogue of the Brownian loop soup was first introduced by Lawler and Trujillo Ferreras~\cite{lawler2004randomwalkloopsoup}. They defined the random walk loop soup (RWLS) on the square lattice and proved its convergence to BLS. The proof relies on a strong KMT approximation, which enables a coupling between macroscopic random walk loops and macroscopic Brownian loops.
Qian~\cite{qian2026couplingbrownianloopsoups} strengthened both this coupling and its extension to higher dimensions~\cite{MR3877544} to all loops at polynomial scales, but still within the lattice setting.
Le Jan~\cite{jan2008markovloopsrenormalization} introduced the Markovian loop soup, a continuous-time generalization of RWLS associated with a Markov chain. At least in the discrete setting, the occupation field of the Markovian loop soup is equal to half of the square of the discrete Gaussian free field~\cite{jan2008markovloopsrenormalization,jan2010markovpathsloopsfields}. 

The principle of universality suggests that these results should hold for a much broader class of planar graphs.
In this paper, we consider a more general underlying graph (than $\Z^2$) which satisfies three assumptions:
(i) Invariance Principle, (ii) the bounded density of the graphs and (iii) a Russo-Seymour-Welsh type crossing estimation. These assumptions are introduced and discussed by Berestycki, Laslier and Ray~\cite{berestycki2018dimersimaginarygeometry,berestycki2024dimersriemannsurfacesii,berestycki2024dimersriemannsurfacesi}, see Section \ref{graph} for details. 
Our main result in the present paper is as follows:
\begin{theorem}\label{scaling limit}
	Let $D$ be a simply connected and bounded domain. Let $\{\Gamma ^{\#\delta}\}_{\delta>0}$ be a sequence of graphs, all of which satisfy the assumptions of Section \ref{graph}. Let $\{\mathcal{L}_D^{\#\delta}\}_{\delta>0} $ be the RWLS on $\{\Gamma^{\#\delta}\}_{\delta>0}$ restricted to $D$. Let $\mathcal{L}_D$ be the BLS restricted to $D$. Then in the sense of the loop soup topology defined in Section \ref{topology}, as $\delta \to 0$, the random walk loop soup converges in law to Brownian loop soup. 
	\[ \mathcal{L}_D^{\# \delta} \xrightarrow[\delta \to 0]{(d)} \mathcal{L}_D. \]
\end{theorem}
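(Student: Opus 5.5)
The plan follows the strategy announced in the abstract: realize the random walk loop soup through Wilson's algorithm, prove tightness and convergence of the macroscopic loops from universality inputs, and identify the limit with the square-lattice case of \cite{lawler2004randomwalkloopsoup}.

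\textbf{Step 1 (Wilson's algorithm as a loop soup).} We use the classical fact (Le Jan, Lawler) that running Wilson's algorithm in $D^{\#\delta}$, wired at $\partial D$, and collecting the loops erased during each loop-erasure step gives a collection of rooted loops. After uniform rerooting and forgetting the parametrization, this collection has the law of the RWLS $\mathcal{L}_D^{\#\delta}$, up to the microscopic loops that our topology ignores. We use a \emph{greedy} ordering of the starting points. For a fixed mesoscale $\eta>0$, we first launch walks from a finite $\eta$-net of points $z_1,\dots,z_N$ in $D$. We then fill in the remaining vertices, whose branches create loops of diameter $\lesssim \eta$ with high probability. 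Only the first $N$ branches then carry loops of diameter $\ge \epsilon$, up to an error that vanishes as $\eta\to0$.

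\textbf{Step 2 (convergence of the discrete pieces).} For fixed $\eta$, the Invariance Principle gives joint convergence of the walks from $z_1,\dots,z_N$, each stopped on hitting $\partial D$ or the previously built tree, to Brownian motions stopped on hitting the corresponding continuum objects. The continuum objects are the scaling limits of the loop-erased branches. To pass chronological loop-erasure to the limit, we must show that the erased loops, viewed as unrooted macroscopic loops, depend continuously on the path. Specifically, we need that the discrete erased loops of diameter $\ge\epsilon$ converge to the loops cut out of the Brownian path by the limiting loop-erasure. Here we invoke the RSW crossing estimate: it gives quasi-multiplicativity and non-triviality of near-pinching events, so with high probability no discrete loop "almost closes" without closing macroscopically. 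This is Schramm's finiteness theorem: uniformly in $\delta$, for each $\epsilon$ the number of $\epsilon$-loops is tight, and the configuration has no $\epsilon$-scale fjords. We need a version for general graphs whose proof uses only the RSW assumption and bounded density.

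The bounded-density assumption enters in two places. It ensures that the $\eta$-net procedure loses only loops of small diameter. It also controls the number of small loops near each point, which bounds the error from the fill-in phase.

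\textbf{Step 3 (identification and conclusion).} From Steps 1 and 2 we obtain tightness of $\mathcal{L}_D^{\#\delta}$ in the loop soup topology of Section~\ref{topology}. We also obtain that every subsequential limit has a law determined by a continuum functional of $N$ Brownian motions, in the limit $\eta\to0$. This functional does not depend on the graph sequence. It therefore coincides with the limit for $\Gamma^{\#\delta}=\delta\Z^2$, which satisfies the assumptions of Section~\ref{graph}. By \cite{lawler2004randomwalkloopsoup} that limit is the BLS $\mathcal{L}_D$, and the theorem follows.

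\textbf{Main obstacle.} The hardest step is the continuity of loop-erasure in Step 2. Loop-erasure is not continuous in the path, and the walk's stopping at the random tree must also be controlled. I would first try to prove uniform-in-$\delta$ estimates ruling out "near-loops": pairs of times $s<t$ with $|X_s-X_t|<r$ but a path excursion of diameter $\ge\epsilon$ between them, with probability $o(1)$ as $r\to0$. These would come from RSW-based six-arm or annulus-crossing bounds, as in Schramm's argument.
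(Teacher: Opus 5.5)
Your outline shares the paper's scaffolding: the Wilson-algorithm coupling with the soup, a finiteness theorem reducing to finitely many branches, and identification through $\delta\Z^2$ and \cite{lawler2004randomwalkloopsoup}. However, Step~2, where the convergence is actually proved, has a genuine gap, and the estimate you propose to close it is false as written. A walk killed on $\partial D$ has times $s<t$ with $X_s=X_t$ and $\diameter X[s,t]\ge\epsilon$ with probability bounded below uniformly in $\delta$. These are precisely the macroscopic loops you want to capture, so no near-loop bound that is $o(1)$ as $r\to0$ can hold. More fundamentally, chronological loop-erasure is not continuous in the path, and it cannot be applied to a Brownian path, which has loops at every scale. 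So there are no ``loops cut out of the Brownian path by the limiting loop-erasure'' to converge to. Nor is a subsequential joint limit of (walk, loop-erasure) known to be graph-independent, which your Step~3 requires.

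The missing idea is the \emph{greedy} Wilson algorithm. Each branch is built by running the walk only until it leaves a ball of radius $r(\epsilon)$ around its current position. The first step instead uses radius $\epsilon$, and an ERROR is declared when a step ends in $\overline{B(v_0,r(\epsilon))}$; this is made unlikely by taking $r(\epsilon)$ tiny (Lemma~\ref{error}), and it keeps the relevant domains simply connected. Each step then needs only two limits. The first is the last-visit point of the new walk segment on the current simple curve, which is an a.s.\ continuous functional because Brownian motion crosses a simple curve at a dense set of points. The second is the loop-erasure of the remaining piece: a chordal LERW in a Carath\'eodory-converging simply connected domain, which converges to chordal $\SLE_2$ by Theorem~\ref{chordal}. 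Off the ERROR event, every erased loop is within $2\epsilon$ of the explicit concatenation \eqref{greedy loop} of walk pieces $X_n$ and loop-erased pieces $A_n$. This concatenation is a continuous functional of the jointly converging triples of Theorem~\ref{scaling limit of XAY}. The exponential tail of the number of steps (Lemma~\ref{exp tail}) then gives Corollary~\ref{k branches}. The continuum greedy algorithm, built from Brownian pieces and chordal $\SLE_2$ pieces, is a graph-independent limit, and this is what makes your Step~3 legitimate.

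Two smaller points also need repair. First, the loops erased in Wilson's algorithm are not distributed as the soup ``up to microscopic loops''. By Proposition~\ref{loop addition} and Corollary~\ref{loop addtion2}, the loop erased at a vertex $v$ is the concatenation of all soup loops attached to $v$. You must therefore show that with high probability no vertex carries two soup loops of diameter at least $4\epsilon$. The paper does this in Lemma~\ref{proba of coming back}: from distance $\epsilon$ the walk hits a prescribed vertex with probability $o_\delta(1)$, and on $\{N\le K\}$ only the at most $K$ last-visit points of the greedy steps can serve as such roots.

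Second, a single $\eta$-net followed by an arbitrary fill-in does not make the remaining branches small. The Beurling estimate bounds the probability that each of the order $\delta^{-2}$ later walks escapes to distance $\epsilon$ only by $c(\eta/\epsilon)^{c'}$, so the union bound diverges as $\delta\to0$. This is what Schramm's finiteness theorem (Theorem~\ref{theorem schramm}) addresses, not tightness of the number of $\epsilon$-loops or absence of fjords. It works through three ingredients:
\begin{itemize}
\item the multi-scale Good Ordering;
\item a failure probability $(1/2)^{j^2/C_0}$ per walk at scale $6^{-j}$, obtained by iterating the RSW-based Beurling estimate;
\item bounded density, to control the vertices at scales too fine for the crossing estimate.
\end{itemize}
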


The proof is based on two tools. First, we use the relationship between the RWLS and the loops erased in Wilson's algorithm. Wilson's algorithm~\cite{10.1145/237814.237880} is a method to generate a spanning tree of a given graph by iteratively performing loop-erased random walks (LERW). 
It was shown in \cite{Lawler_Limic_2010} that the loops erased in Wilson's algorithm can be obtained by concatenating the loops in RWLS;  see Proposition~\ref{loop addition} and Corollary~\ref{loop addtion2} for more details.
We define a ``greedy'' Wilson's algorithm, first introduced by Berestycki, Laslier, and Ray~\cite{berestycki2024dimersriemannsurfacesii}, which separates each random walk path in Wilson's algorithm into a concatenation of tiny paths and approximates some tiny paths by their loop-erasure. This ensures that we can ``ignore'' most microscopic loops and approximate the macroscopic loops erased in the greedy Wilson's algorithm. To measure the distance between the loops erased during the sampling of finite branches in this algorithm and the macroscopic loops in RWLS that attach to these branches, we use the following observation: as the mesh size decreases, the macroscopic loops erased in this algorithm have distinct roots and hence approximate the macroscopic loops in RWLS; see Section \ref{Concatenation probability}. 
The loops erased during the sampling of the remaining branches remain microscopic by Schramm's finiteness theorem~\cite{schramm1999scalinglimitslooperasedrandom,berestycki2024dimersriemannsurfacesii}; see also Section \ref{section schramm's finiteness lemma}.

In particular, in this paper, the invariance principle assumption is assumed for the convergence of segments in the greedy Wilson's algorithm, while the remaining assumptions are for Schramm's finiteness theorem.

An immediate corollary is that for any geometrical property of unparameterized loops, we can derive the convergence of the loop measure of the loops satisfying these properties. Here is a list of workable properties:
\begin{itemize}
	\item The loops that have diameter at least $d$ with $d>0$.
	\item The loops that touch a finite connected closed set $B_1, \cdots , B_n$ of positive size.
	\item The loops that are non-contractible and stay in the complement of finitely many connected open sets $O_1, \cdots, O_n$.
\end{itemize}

\begin{corollary}\label{cor}
	Under the same hypothesis as Theorem~\ref{scaling limit}, let $\mu^{\#\delta}_D$ (resp. $\mu_D$) be the random walk loop measure (resp. the Brownian loop measure) restricted to $D$. Let $A$ be an open set in the space of unparameterised loops such that $A\subset \{l: \diameter (l) \ge \epsilon\}$ for some $\epsilon >0$ and $\mu_D(\partial A)=0$. Then
	\[ \mu^{\#\delta}_D (A)\xrightarrow[\delta \to 0]{} \mu_D (A). \]
\end{corollary}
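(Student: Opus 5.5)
The idea is to read the loop measure off the loop soup through Poisson statistics, and then use Theorem~\ref{scaling limit}. For $\lambda>0$, the RWLS with intensity $\lambda\mu^{\#\delta}_D$ is a Poisson point process. So the number $N^{\#\delta}(A)$ of loops of $\mathcal{L}^{\#\delta}_D$ lying in $A$ is Poisson with mean $\lambda\mu^{\#\delta}_D(A)$, which gives
\[ \mathbb{P}\bigl(N^{\#\delta}(A)=0\bigr)=\exp\bigl(-\lambda\mu^{\#\delta}_D(A)\bigr), \]
and similarly for the BLS. It therefore suffices to show $\mathbb{P}(N^{\#\delta}(A)=0)\to\mathbb{P}(N(A)=0)$, provided $\mu_D(A)<\infty$. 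Finiteness holds because $A\subset\{\diameter\ge\epsilon\}$ and the Brownian loop measure of loops in a bounded domain with diameter at least $\epsilon$ is finite.

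First I will check that, in the loop soup topology of Section~\ref{topology}, the counting functional $\mathcal{L}\mapsto N(A)$ is continuous at almost every realization of the BLS. Macroscopic loops are matched one-to-one up to small distance in the loop metric, so a loop of the limit lying in the interior of $A$ is eventually matched by a loop in $A$. Conversely, a loop outside $\overline{A}$ is matched by loops outside $A$. The event that some BLS loop lies on $\partial A$ has probability zero, since its expected number is $\lambda\mu_D(\partial A)=0$. Because $A$ only contains loops of diameter at least $\epsilon$, the topology controls them: only finitely many loops of diameter at least $\epsilon/2$ exist a.s. However, loops of diameter near $\epsilon$ could appear or disappear under the topology. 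I handle this by noting that $\partial A$ already captures any threshold issues, because $\{\diameter\ge\epsilon\}$ enters only through the inclusion $A\subset\{\diameter \ge \epsilon\}$: a loop near the cutoff that is not in $\partial A$ is robustly in or out of $A$.

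Next, with this a.s.\ continuity, the continuous mapping theorem applied to Theorem~\ref{scaling limit} gives $N^{\#\delta}(A)\to N(A)$ in law. Hence $\mathbb{P}(N^{\#\delta}(A)=0)\to e^{-\lambda\mu_D(A)}$, and taking logarithms yields $\mu^{\#\delta}_D(A)\to\mu_D(A)$. If the paper's soup has intensity $1$ (or $1/2$), the constant is simply carried along.

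The main obstacle is the continuity step. It depends on how the loop soup topology of Section~\ref{topology} treats loops near the diameter cutoff that defines ``macroscopic''. If that topology compares only loops above a varying threshold, I would first choose a threshold $\eta<\epsilon$ with $\mu_D(\diameter=\eta)=0$, which holds for all but countably many $\eta$. I would then argue on the finite sets of loops with diameter at least $\eta$, where convergence is convergence of finite point configurations. Counting loops in $A$ is then continuous off the null event $\{$some loop in $\partial A\}$.
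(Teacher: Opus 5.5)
Your proposal is correct and follows the paper's route. You show that the counting map $N(\cdot,A)$ on $\mathcal{M}(D)$ is almost surely continuous away from configurations with a loop in $\partial A$, using openness of $A$ and finiteness of loops of diameter $\ge\epsilon/2$; you then apply the continuous mapping theorem to Theorem~\ref{scaling limit} and use the Poisson structure. You make that last step explicit through $\mathbb{P}(N(A)=0)=e^{-\mu_D(A)}$, which the paper leaves implicit.

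Your worry about loops near the diameter cutoff, and the fallback threshold argument, are unnecessary in this topology. If $d_{\mathcal{M}(D)}(L^n,L)<\eta$ with $2\eta<\epsilon$, every loop of diameter $\ge\epsilon$ on either side must be matched to a loop within distance $\eta$. That partner has diameter at least $\epsilon-2\eta$, so no loop of $A$ can appear or disappear.
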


The paper is structured as follows: In Section \ref{background}, we introduce the graph setting and review key facts on loop soup, LERW and Wilson's Algorithm. We also define the metric space in which the loop soup lies.
In Section \ref{chapter GWA}, we introduce both the discrete and continuum versions of the Greedy Wilson's Algorithm and establish the scaling limit result for the loops erased in these algorithms. In Section \ref{chapter proof}, we derive Theorem \ref{scaling limit} following the strategy outlined above, and we prove Corollary \ref{cor}.

\section{Background and setup}\label{background}
\subsection{Graph discretization and assumptions} \label{graph}
In this section, we specify the class of planar graphs considered in this work. This setup is adapted from the conditions introduced and discussed by Berestycki, Laslier and Ray~\cite{berestycki2018dimersimaginarygeometry,berestycki2024dimersriemannsurfacesii,berestycki2024dimersriemannsurfacesi}.
Specifically, we consider a sequence of graphs $\{ \Gamma^{\#\delta} \}_{\delta >0}$ , where each $\Gamma^{\#\delta} =(V(\Gamma^{\#\delta}), E(\Gamma^{\#\delta})) $ is an oriented, infinite, and connected planar graph properly embedded in the plane. We associate a weight  $w(u,v)$ for every oriented edge $(u,v)\in E(\Gamma^{\#\delta})$. We consider the continuous time random walk where the walker jumps from $u$ to $v$ with rate $w(u,v)$. We call it the \textbf{Random Walk} on $\Gamma^{\#\delta}$. Precisely, for each vertex $v$, the holding time on $v$ is an exponential distribution with rate $\sum_{(u,v)\in E(\Gamma^{\#\delta})} w (u,v)$, then the walker will jump to its neighbor $v$ with transition probability $q(u,v)=\frac{w(u,v)}{\sum_{(u,v')\in E(\Gamma^{\#\delta})} w(u,v')} $. Let us emphasize that we allow nonreversible chains. 
We assume that $\{ \Gamma ^{\# \delta}\}_{\delta}$ satisfy the following properties:

\begin{enumerate}[label=(\roman*)]
	\item \textbf{(Invariance principle)} As $\delta \to 0$, the continuous time random walk $\{  X^{\#\delta}_t \} _{t\ge 0}$	on $\Gamma ^{\# \delta}$ started from the nearest vertex to $0$ converges in law to a standard Brownian motion $(B_t, t\ge 0)$ in the following sense: For every compact set $K\subset \C$ containing $0$ in its interior, let $\tau_K$ denote the exit time of $K$,  in the sense of uniform topology on curves up to parametrization, we assume
	\[ (X^{\#\delta}_{t})_{0\le  t\le \tau_K} \xrightarrow[\delta \to 0]{(d)} (B_{t})_{0\le t\le \tau_K}. \] 
	\item \textbf{(Bounded density)} We assume that there exists a constant $C$ independent of $\delta$, such that the number of vertices of $\Gamma^{\# \delta}$ in any ball of radius $\delta$ is smaller than $C$.
	\item \textbf{(Uniform crossing estimate)} Let $R$ be the horizontal rectangle $[0,3] \times [0,1]$ and $R'$ be the vertical rectangle $[0,1]\times [0,3]$. Let $B_1 := B((1/2,1/2),1/4) $ be the starting ball and $B_2:= B((5/2,1/2),1/4)$ be the target ball. There exist universal constants $\delta_0 , \alpha _0> 0$, such that for all $\delta < \delta_0$, the following is true. For all $z\in \mathbb{C}$, $l\ge 1/\delta_0$, for all $v\in \Gamma^{\#\delta} \cap ( l \delta B_1 +z )$, we have
	\[ \mathbb{P}_{v} (X^{\#\delta} \textit{ hits  } (l\delta B_2+z) \textit{ before exiting } (l\delta R+z))> \alpha_0, \]
	where $X^{\#\delta} $ is a random walk starting from $v$. The symmetric statement with $R'$ also holds.
	
\end{enumerate}

Let $D$ be a simply connected and bounded domain. We choose the crossing point of the edges of $\Gamma^{\#\delta}$ and the boundary of the domain $\partial D$ as the boundary of the graph, denoted by $\partial \Gamma^{\#\delta}_D$. 
We will only consider the random walk in $D$ until it hits the boundary, so we assume that the random walk is killed at that time. 
Also, with a slight abuse of notation, we will generally regard the trajectory of a random walk as a continuous path via linearization along edges used in order. 
\begin{remark}
	Throughout this paper, we adopt the continuous-time random walk framework. However, since all assumptions and results depend only on the underlying topology up to reparameterization, the use of a discrete-time random walk would yield no essential difference. 
\end{remark}

Two immediate results of the above assumptions are the following lemma:
\begin{lemma}[Beurling type estimate]\label{beurling estimate}
	For all $r,\epsilon>0$, there exist $\eta>0$ and $\delta(\eta)$ depending on $\eta$ such that for any $\delta<\delta(\eta)$, for any vertex $v\in \Gamma^{\#\delta}$ such that $d(v, \partial D)<\eta$, the probability that a simple random walk emanating from $v$ exits $B(v,r)$ before hitting $\partial \Gamma ^{\#\delta}$ is at most $\epsilon$, i.e.
	\[ \mathbb{P}_v (X^{\#\delta} \textit{ exits } B(v,r) \textit{ without hitting } \partial \Gamma^{\#\delta}_D ) <\epsilon. \]
\end{lemma}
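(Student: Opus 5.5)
The plan is to prove the Beurling type estimate by iterating the uniform crossing estimate (iii) across dyadic annuli around a boundary point. The key input is that $D$ is simply connected, so $\partial D$ is a connected set. Hence whenever $d(v,\partial D)<\eta$ with $\eta$ small relative to $r$, there is a continuum (a connected piece of $\partial D$) joining $B(v,\eta)$ to the complement of $B(v,r/2)$. We may assume $r$ is smaller than, say, half the diameter of $D$; otherwise we replace $r$ by a smaller value, which only makes the event smaller. A random walk that winds around an annulus $A_k=B(x,2^{k+1}\rho)\setminus B(x,2^k\rho)$, where $x\in\partial D$ is a nearest point to $v$, must then cross this continuum. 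Since the walk is linearized along edges and the boundary of $\Gamma^{\#\delta}_D$ consists of the crossing points of edges with $\partial D$, crossing $\partial D$ means hitting $\partial\Gamma^{\#\delta}_D$.

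\textbf{Step 1: one annulus.} Show that there is a constant $p_0>0$, depending only on $\alpha_0$, such that for every annulus $A(x;s,2s)$ with $s\ge C\delta/\delta_0$ and every starting vertex in $B(x,s)$, the walk makes a full loop around $A(x;s,2s)$ before exiting $B(x,2s)$ with probability at least $p_0$. The argument is the standard one. Cover the annulus by a bounded number of $3\times1$ rectangles (horizontal and vertical) of side comparable to $s$, chained so that the target ball of each rectangle is the starting ball of the next and the chain closes around the annulus. Then apply (iii) successively with the strong Markov property; the required scale $l\delta$ equals $s/4$ or similar, which is legitimate once $l\ge 1/\delta_0$. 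A few extra crossings first bring the walk from an arbitrary point of $B(x,s)$ into the first starting ball; these can be arranged inside $B(x,2s)$ by a standard choice of rectangles. This gives $p_0\ge\alpha_0^{N}$ for a universal $N$.

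\textbf{Step 2: iteration.} Fix $r$ and take $\rho=\eta$. Consider the annuli $A(x;2^k\eta,2^{k+1}\eta)$ for $k=1,\dots,K$, where $K\approx\log_2(r/(4\eta))$, so all of them lie in $B(v,r)$. To exit $B(v,r)$ the walk must traverse each annulus in turn. When it first reaches $\partial B(x,2^k\eta)$, Step 1 and the strong Markov property give conditional probability at least $p_0$ of a full loop inside the annulus. Such a loop disconnects $B(x,2^k\eta)$ from $\infty$, so it intersects the continuum of $\partial D$ joining $B(x,\eta)$ to $\partial B(v,r/2)$, and therefore hits $\partial\Gamma^{\#\delta}_D$ before the walk leaves $B(v,r)$. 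Hence
\[ \mathbb{P}_v(X^{\#\delta}\text{ exits }B(v,r)\text{ without hitting }\partial\Gamma^{\#\delta}_D)\le(1-p_0)^{K}. \]
Choosing $\eta$ so that $(1-p_0)^{\log_2(r/4\eta)}<\epsilon$, and then $\delta(\eta)$ so small that $2\eta\ge C\delta/\delta_0$ (all scales admissible for (iii)), yields the claim.

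\textbf{Main obstacle.} I expect the main obstacle to be the topological step: showing that a loop of the linearized walk around the annulus genuinely hits $\partial\Gamma^{\#\delta}_D$. This requires that the loop is in $D$ until it meets $\partial D$, and that $\partial D$ contains a connected component reaching from near $x$ to distance $r/2$. The latter holds because $\partial D$ is connected for a bounded simply connected domain, with diameter at least $r$ once $r$ is below $\operatorname{diam} D$. For the former, I would use that the walk is killed at $\partial\Gamma^{\#\delta}_D$: any edge crossing $\partial D$ has its crossing point in the graph boundary, so the linearized path cannot leave $\overline D$ without first hitting $\partial\Gamma^{\#\delta}_D$. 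A secondary check is that the rectangle chaining in Step 1 works for vertices in arbitrary positions, using only (iii) together with (ii) to ensure that vertices exist in the starting balls.
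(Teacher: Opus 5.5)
Your proof is correct, and it takes a different route from the paper. The paper gives no argument of its own: it defers to Lemma~2.6 of \cite{berestycki2024dimersriemannsurfacesii}, and in Appendix~\ref{appendix B} it contrasts this qualitative statement with the quantitative, crossing-estimate-only Lemma~\ref{beurling}.

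Your argument is in effect the proof of Lemma~\ref{beurling} specialized to $K=\partial D$. You use dyadic annuli around a nearest boundary point $x$, and in each annulus you get a loop with conditional probability at least $p_0$ by chaining (iii). Each such loop must meet $\partial D$, because $\partial D$ is a continuum (as $D$ is bounded and simply connected) joining $x$ to a far point.

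Compared with the citation, this buys several things:
\begin{itemize}
\item a self-contained proof using only assumption (iii), with no invariance principle;
\item an explicit bound of order $(\eta/r)^{c}$, with $\delta(\eta)$ of order $\delta_0\eta$;
\item a clean identification of ``hitting $\partial\Gamma^{\#\delta}_D$'' with the first time the linearized \emph{unkilled} walk on $\Gamma^{\#\delta}$ meets $\partial D$.
\end{itemize}
The killed and unkilled walks agree up to that time, so your worry about the loop staying inside $D$ is unnecessary. Note also that Lemma~\ref{beurling} as stated concerns hitting a discretization $K^{\#\delta}$, so invoking it directly would require exactly this extra translation.

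One step needs repair: ``the chain closes around the annulus'' does not by itself give a closed curve. A chain of crossings whose last target ball is the first starting ball only produces a path from $a$ to $b$, with $a,b$ in the same small ball. A radial piece of $\partial D$ can pass through that ball between $a$ and $b$ without touching the path, so your sentence ``such a loop disconnects $B(x,2^k\eta)$ from $\infty$'' is not yet justified.

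The standard fix is to force the last crossing to intersect the first. Finish the chain with a crossing of a transversal $1\times 3$ rectangle whose middle square is the middle square of the first rectangle.
\begin{itemize}
\item Any path from $B_1$ to $B_2$ inside the first rectangle crosses that square from left to right.
\item The transversal crossing crosses it from bottom to top, so the two meet at some point $p$.
\item The walk's trajectory between its two visits to $p$ is then a genuine closed curve inside the annulus, winding once around $x$.
\end{itemize}
This adds only a bounded number of crossings, so $p_0\ge\alpha_0^{N}$ survives with a slightly larger $N$.

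A minor point: when the walk first reaches $\partial B(x,2^k\eta)$ it sits just outside that ball. Step~1 should therefore be stated for starting points within a few mesh sizes of $\overline{B(x,s)}$; Lemma~\ref{uniform small edge} controls this. With these adjustments, the rest of your argument goes through as written.
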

\begin{proof}
	This is a restatement of Lemma 2.6 from \cite{berestycki2024dimersriemannsurfacesii}. As the proof carries over directly to the present context, we do not reproduce it here.
\end{proof}
\begin{lemma}[The size of edges is uniformly small]\label{uniform small edge}
	Assuming only the uniform crossing estimate,  as $\delta \to 0$, we have \[ \sup_{e\in \Gamma \dis} \diameter(e) =o(1). \]
\end{lemma}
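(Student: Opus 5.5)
We argue by contradiction, using only the uniform crossing estimate (iii) and the planarity of the embedding. Suppose the claim fails. Then there are $c>0$, a sequence $\delta_n\to 0$, and edges $e_n=(u_n,v_n)\in E(\Gamma^{\#\delta_n})$ with $\diameter(e_n)\ge c$. Since the edge is linearized as a path from $u_n$ to $v_n$, we may pick a point $w_n\in e_n$ at distance at least $c/2$ from $u_n$. The idea is to build a crossing configuration in which the walk must cross the drawn arc $e_n$ at a point other than a vertex. Planarity forbids this, so the walk can only follow the arc by jumping along $e_n$ itself, and that single transition becomes an event whose probability the crossing estimate bounds from below, leading to a contradiction.

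\textbf{Step 1: choosing a scale.} Translate so that $u_n$ is at the origin. Choose a rectangle $l\delta_n R+z$ with $l\delta_n$ of order $c/10$. This is admissible because $l\sim c/(10\delta_n)\ge 1/\delta_0$ for large $n$. Because the arc $e_n$ joins $u_n$ to a point at distance $c/2$, it must cross this rectangle (or a rotated copy of it built from $R'$) in the long or short direction.

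\textbf{Step 2: a chain of crossings.} By chaining $O(1)$ translated and rotated copies of the crossing events, we form an annulus $A$ of radii comparable to $c$ around a point of the arc, with every piece of the chain at scale $l\delta_n$. By the Markov property, the walk started from a vertex near the inner boundary makes a full loop around $A$, staying inside $A$, with probability at least $\alpha_0^{K}$, where $K$ is a constant independent of $n$. The arc $e_n$ crosses $A$ radially, so any closed curve winding around $A$ intersects $e_n$. The walk's trajectory consists of its own edges, and since the graph is properly embedded, edges meet only at vertices. The trajectory can therefore meet $e_n$ only at $u_n$ or $v_n$, or by traversing $e_n$.

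\textbf{Step 3: reaching the contradiction.} We arrange $A$ to avoid small balls around $u_n$ and $v_n$; this is possible because $e_n$ has a sub-arc at distance at least $c/8$ from both endpoints. With this arrangement, the only way for the walk to wind around $A$ is to traverse $e_n$, which would require it to be at $u_n$ or $v_n$, and those lie outside $A$. So the winding event has probability $0$, which contradicts the lower bound $\alpha_0^{K}>0$ from Step 2.

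\textbf{Main obstacle.} The hardest step is the topological one. We must check that the winding event built from the rectangle crossings, namely hitting $B_2$ before exiting $R$, really forces the trajectory to surround the annulus. The crossing estimate only gives "hit the target ball before exiting", not a full closed circuit. To fix this, I would concatenate four crossings of long rectangles arranged around the annulus, each starting in the target ball of the previous one, and then close the loop back into the first starting ball. The union of these segments then contains a closed curve separating the two boundary components of $A$, and the argument of Step 3 applies to that curve.
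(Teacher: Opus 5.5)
Your planarity observation is the right mechanism: the trajectory can meet $e_n$ only at $u_n$, $v_n$, or by traversing $e_n$. But the step you flag as the main obstacle fails, and it is not a technicality.

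Chaining crossings around the ring brings the walk back into the first starting ball $S_1$, not to its starting vertex. Its trajectory is therefore an open path, not a closed curve. To close it you must add a connector inside $S_1$. The intersection with $e_n$ forced by the winding may then lie on that connector rather than on the walk.

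Nothing prevents this, because in a proper embedding an edge is an arbitrary Jordan arc. Let $e_n$ spiral many times around the centre of $A$, with arms much closer together than $l\delta_n$. All your rectangles are oriented tangentially, so the arms obstruct none of the individual crossings. The walk can follow the spiral corridor through $S_1\to S_2\to\cdots$ and re-enter $S_1$ one turn further out without ever touching $e_n$. So the winding event can have positive probability, and no contradiction results.

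Your argument does work if edges are straight segments, since you can then place $S_1$ away from the two pieces of $e_n\cap A$; but the paper treats edges as general curves. Note also that a closed circuit needs right-to-left and top-to-bottom crossings, which (iii) as written does not explicitly provide.

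The missing idea is to make the walk cross \emph{transversally} to the long edge. That needs a single crossing and no circuit at all; your Step 1 gestures at this and then abandons it. The paper takes a sub-arc $e'$ of the edge with diameter $\epsilon$. If the chord realizing this diameter is within $\pi/4$ of vertical (otherwise use $R'$), then $e'$ has vertical extent at least $\epsilon/\sqrt{2}$. A horizontal rectangle $\frac{\sqrt{2}}{2}\epsilon R+z$ can therefore be placed so that $e'$ runs across it from bottom to top, between the starting ball and the target ball. Every path from one ball to the other inside the rectangle must then cross the edge, which the walk cannot do. The crossing probability is therefore $0$, contradicting $\alpha_0>0$. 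In the spiral example, this is exactly the radial crossing that your tangential ring of rectangles never tests.
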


\begin{proof}
	We prove this result by contradiction. Assume that there exists $\epsilon>0$, such that  for all $n\in \mathbb{N}$, there exists $\delta_n <1/n$ and an edge $e^{\#\delta_n} \in \Gamma^{\#\delta_n}$, such that $\diameter (e^{\#\delta_n})>\epsilon$. Let $e'^{\#\delta_n}$ be a continuous portion of $e^{\#\delta_n}$ with diameter $\epsilon$. Without loss of generality, assume that the line segment connecting the two points realizing the diameter of $e'^{\#\delta_n}$ makes an angle of at most $\pi /4$ with the vertical axis. By the uniform crossing estimate, we use the same notation as in the assumption, take $\delta_n$ such that $\delta_n /\delta_0 < \epsilon /4$, then there exists $z\in \C$ such that the two balls $\frac{\sqrt{2}\epsilon}{2} B_1 +z$ and $\frac{\sqrt{2}\epsilon}{2}B_2 +z$  in the horizontal rectangle $\frac{\sqrt{2}\epsilon}{2} R + z$ are separated by the edge $e^{\#\delta_n}$. In this case, the probability of the simple random walk starting from an abstract vertex in the left ball $\frac{\sqrt{2}\epsilon}{2} B_1 +z$  hitting the right ball $\frac{\sqrt{2}\epsilon}{2} B_2+z$ before exiting the rectangle $\frac{\sqrt{2}\epsilon}{2} R +z$ is $0$, which contradicts the uniform crossing estimate assumption.
\end{proof}

\subsection{Loop measures and loop soup} \label{topology} In this section, we define the loop soup and the topological space in which it lies.
\paragraph*{\textbf{Space of curves}}
Let $\mathcal{C}$ be the space of all continuous curves from $[0,1]$ to $\C$ modulo reparameterizations. 
Define the metric on $\mathcal{C}$ as follows: for any two curves $\gamma_1, \gamma_2$, \[ d_{curve} (\gamma_1, \gamma_2) =\inf \sup_{0\le t\le 1 } |\gamma'_1(t)-\gamma'_2(t)|, \] 
where the infimum is taken over all parameterizations $\gamma_1',\gamma_2'$ of $\gamma_1,\gamma_2$. The metric space $(\mathcal{C},d_{curve})$ is separable and complete, see \cite{kemppainen2015randomcurvesscalinglimits}. We note that the parameterization from $[0,1]$ is used to ensure the positivity of the metric. In later contexts, a curve may be defined on a different interval, but it can always be viewed as parameterized by $[0,1]$.

\paragraph{\textbf{Space of loops}}
Let $\tilde{\mathcal{C}}$ be the set of all rooted loops in $\mathcal{C}$, i.e. curves whose starting and ending points coincide. Define an equivalence relation $\sim $ on $\tilde{\mathcal{C}}$ by setting $\gamma_1 \sim \gamma _2$ if and only if there exists a right translation $\theta_r$ of $\mathbb{R}$ for some $r\ge 0$ and two corresponding parameterized representations $\gamma_1'$ and $\gamma_2'$, such that $\gamma'_1 =\gamma'_2 \circ \theta_r $ (here we extend the rooted loop to $\mathbb{R}$ by its periodicity). We define the space of (unrooted) loops as the quotient space $\mathscr{L}:=\tilde{\mathcal{C}} /\! \sim$. We define the metric on $\mathscr{L}$ as follows: for $\gamma_1$, $\gamma_2 \in \mathscr{L}$, 
\[ d(\gamma_1, \gamma_2)=\inf _{0\le r\le 1;\gamma_1',\gamma_2'} d_{curve}(\gamma'_1, [\theta_r\circ \gamma'_2]), \] where $\gamma_2'$ is a parameterization of $\gamma_2$ and $[\theta_r\circ \gamma'_2]$  is the equivalence class of $\gamma_2'$ after translation. It is easy to see that $d$ is a metric. The separability of $(\mathscr{L},d)$ follows directly from the separability of $(\mathcal{C},d_{curve})$. For the completeness, it suffices to notice that for each Cauchy sequence in $(\mathscr{L},d)$, we can choose simultaneously parameterizations such that the parameterized sequence is Cauchy in $(\mathcal{C}, d_{curve})$. For a domain $D\subset \C$, we denote the loop space restricted to $D$ by $\mathscr{L}_D$.

\paragraph*{\textbf{Brownian loop soup}}
The \textit{Brownian loop measure} is the measure $\mu$ on $\mathscr{L}$ induced by the measure on $\mathcal{C}$
\[ \int _{\mathbb{C}} \int_0^{+\infty} \frac{1}{2\pi t^2} \mu^{br}_t(z) \, dt\, dz, \]
where $\mu^{br}_t(z)$ is the law of a Brownian bridge from $z$ to $z$ with life time $t$. The \textit{Brownian loop soup} over $\mathscr{L}$ is a Poisson point process with intensity $\mu$, and we denote it by $\mathcal{L}$. For a domain $D\subset \mathbb{C}$, we denote the Brownian loop soup restricted to $D$ by $\mathcal{L}_D$.
\begin{remark}
	It is also common to define the loop soup as a Poisson point process with intensity $Leb_{\mathbb{R_+}}\otimes \mu$ and denote it by $(\mathcal{L}_{\alpha},\alpha \ge 0)$. One of the advantages of this definition is that we have a natural coupling for all time $\alpha \in \mathbb{R}_+$.  In this paper, we are only concerned with the multiset of loops $\mathcal{L}_1$, which we denote by $\mathcal{L}$. 
\end{remark}

\paragraph*{\textbf{Random Walk Loop Soup}}
A rooted loop is a finite path $l=(v_0,v_1,\cdots,v_p)$ with $v_0=v_p$. We denote the length of $l$ by $|l|$. The \textit{rooted loop measure} of $\Gamma$ is a measure $\mu^{rooted}$ defined on the set of all rooted loops by
\[\mu^{rooted}(l)=\frac{\Pi _{i=0}^{|l|-1}q(v_i,v_{i+1})}{|l|}. \]
An unrooted loop is an equivalence class of rooted loops modulo translations.
This induces the \textit{random walk loop measure} of $\Gamma$, defined on the set of all unrooted loops by
$$\mu^{\Gamma} ([l])=\sum_{l'\in[l]} \mu^{rooted} (l'). $$
To ensure that the discrete loops lie in the loop space $\mathscr{L}$, we view a discrete loop as a continuous function via linearization along edges. By Lemma \ref{uniform small edge}, this does not attack the proof of Theorem \ref{scaling limit}.  The \textit{random walk loop soup} is a Poisson point process on $\mathscr{L}$ with intensity $\mu^{\Gamma}$. 
We denote the unrooted loop soup over $\Gamma^{\#\delta}$ defined in section \ref{graph} by $\mathcal{L}^{\#\delta}$.

\paragraph*{\textbf{Markovian Loop Soup}} A based loop is an element $l=(v_1,\tau_1, \cdots ,v_p, \tau_p, v_{p+1}, \tau _{p+1})$ in $\bigcup_{p\in \N} (\Gamma \times (0,+\infty))^{p+1}$ such that $v_{p+1}=v_1$ and $v_{i+1} \not= v_i$ for $i=1, \cdots,p$. We denote the number of vertices in $l$ by $p(l)$. The measure on the based loops is defined as 
\[ \mu^b =\sum_{v\in \Gamma} \int_0^{\infty} \frac{1}{t} \mathbb{P}_{t,v}^v dt, \]
where $\mathbb{P}_{t,v}^v$ is the non-normalized bridge measure from $v$ to $v$ with duration time $t$, precisely, $ \mathbb{P}^v_{t,v} (\cdot) =\mathbb{P}^v_t (\cdot \cap \{ X_t=v\} ) $. We can define an operation which maps a based loop to a rooted loop by forgetting the time duration at each vertex, precisely, $\disc (l)=(v_1, v_2,\cdots , v_{p+1})$. Moreover by \cite{jan2010markovpathsloopsfields,chang2014markovloopsdiscretespaces}, 
for an rooted loop $(w_1,\cdots, w_{p+1}) $, we have
\[ \mu^b (\{l:\disc (l)=(w_1,w_2,\cdots, w_{p+1})\} )= \frac{\Pi_{i=1}^p  q(w_i,w_{i+1}) }{p} =\mu^{rooted} ((w_1, \cdots, w_{p+1})).  \]
With this formula, we can view the random walk loop measure as an induced measure from $\mu^b$ by forgetting the time duration and the root. 

\begin{remark}
	In this paper, we work with the continuous-time random walk, which naturally leads us to introduce the Markovian loop soup. However, since we are not concerned with the time parameterization of loops, the main result in this paper holds equally for both the random walk loop soup and the Markovian loop soup, as they coincide in the loop soup space defined below.
\end{remark}

\paragraph{\textbf{Space of Loop Soups}}
In this paper, we regard the loop soup as an at most countable multiset of loops, where the number of macroscopic loops (loops with diameter larger than some positive number) is almost surely finite. We define
\[ M(D) =\{ (l_n)_{n\in \N} \in \mathscr{L}^{\N} _D: \forall \epsilon >0, \card \{ n\in \N: \diameter l_n \ge \epsilon \}<+\infty  \}, \]
where $\diameter$ means the diameter of a closed set, $\card$ means the cardinal of a set. Define a nonnegative function $d_{M(D)}: M(D) \times M(D) \to \mathbb{R}_+$ as follows, for $(X_i)_{i\in \N }\in M(D)$ and $(Y_j)_{j\in \N} \in M(D)$,
\begin{align*}
	d_{M(D)} & ((X_i)_{i\in \N}, (Y_j)_{j\in \N}) \\
	&:=\inf_{(\lambda :I\to \N)}  \sup \{ d (X_i,Y_{ \lambda (i)}), \forall i\in I; \frac{\diameter(X_i)}{2}, \forall i\in \N \backslash I; \frac{\diameter (Y_j)}{2}, \forall j\in \N \backslash \lambda (I)\} ,
\end{align*}
where the infimum is taken over all finite sets $I\subset \N$ and all injective functions $\lambda : I \to \N$.

\begin{lemma}\label{space of loop soups}
	\begin{enumerate}[label=(\roman*)]
		\item The function $d_{M(D)}$ is a pseudometric.  
		\item Let $\sim$ be the equivalence relation induced by vanishing the pseudometric $d_{M(D)}$. Define $\mathcal{M}(D):=M(D)/\!\sim$. Let $d_{\mathcal{M}(D)}$ be the induced metric. Then $(\mathcal{M}(D),d_{\mathcal{M}(D)})$ is separable.
	\end{enumerate}
	
\end{lemma}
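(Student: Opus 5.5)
For (i) I would check the pseudometric axioms directly from the definition of $d_{M(D)}$. Nonnegativity is built in. For finiteness I take $I=\emptyset$; the supremum is then $\sup_n \diameter(X_n)/2 \vee \sup_m \diameter(Y_m)/2$. This is finite because $D$ is bounded: every loop in $\mathscr{L}_D$ has diameter at most $\diameter(D)$, and by the definition of $M(D)$ only finitely many loops exceed any given size. Symmetry follows by replacing $(I,\lambda)$ with $(\lambda(I),\lambda^{-1})$, since the metric $d$ on $\mathscr{L}$ is symmetric.

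The triangle inequality is the only substantive point. Take $X,Y,Z\in M(D)$ and $\epsilon>0$. I choose a matching $(I,\lambda)$ between $X$ and $Y$ whose cost is below $a+\epsilon$, where $a=d_{M(D)}(X,Y)$. I choose a matching $(J,\kappa)$ between $Y$ and $Z$ whose cost is below $b+\epsilon$, where $b=d_{M(D)}(Y,Z)$. I then compose them on $K=\{i\in I:\lambda(i)\in J\}$ with $\kappa\circ\lambda$, which is injective on the finite set $K$.

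There are then three cases to bound.
\begin{itemize}
\item For $i\in K$, the metric triangle inequality gives $d(X_i,Z_{\kappa\lambda(i)})\le a+b+2\epsilon$.
\item For an unmatched $X_i$: if $i\notin I$, then $\diameter(X_i)/2<a+\epsilon$. If $i\in I$ but $\lambda(i)\notin J$, then $\diameter(Y_{\lambda(i)})/2<b+\epsilon$. In that case I use $|\diameter(X_i)-\diameter(Y_{\lambda(i)})|\le 2\,d(X_i,Y_{\lambda(i)})$, which holds because the Hausdorff distance between the traces is at most the uniform distance between parameterizations. This gives $\diameter(X_i)/2\le a+b+2\epsilon$.
\item Unmatched $Z_k$ are handled symmetrically.
\end{itemize}
Letting $\epsilon\to0$ gives (i). The step needing care is the diameter comparison $|\diameter\gamma_1-\diameter\gamma_2|\le 2d(\gamma_1,\gamma_2)$, which I would verify from the definition of $d$ (rerooting does not change the trace).

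For (ii), the metric $d_{\mathcal{M}(D)}$ is well defined on the quotient by the usual argument. For separability I use the separability of $(\mathscr{L}_D,d)$, which follows from that of $(\mathcal{C},d_{curve})$. Fix a countable dense set $Q\subset\mathscr{L}_D$. Let $\mathcal{Q}$ be the countable family of finite multisets of elements of $Q$, viewed in $M(D)$ by padding with constant (zero-diameter) loops; a constant loop lies in $D$ and contributes $0$ to the supremum.

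Given $X\in M(D)$ and $\epsilon>0$, the set $I=\{i:\diameter X_i\ge 2\epsilon\}$ is finite. For each $i\in I$ I pick $q_i\in Q$ with $d(X_i,q_i)<\epsilon$. Matching each $X_i$, $i\in I$, to $q_i$ gives cost below $\epsilon$ on matched pairs. Each unmatched $X_i$ has $\diameter/2<\epsilon$, and the padding loops have diameter $0$. Hence $d_{M(D)}(X,(q_i))\le\epsilon$, so the image of $\mathcal{Q}$ is dense.

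One subtlety is that the $q_i$ must lie in $\mathscr{L}_D$; this holds because $Q$ is taken dense in $\mathscr{L}_D$ itself. Any subset of a separable metric space is separable, so such a $Q$ exists. I do not expect a real obstacle beyond the diameter-Lipschitz estimate used in the triangle inequality.
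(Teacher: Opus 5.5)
Your proposal is correct and follows essentially the same route as the paper: you prove the triangle inequality by composing near-optimal matchings on $\{i\in I:\lambda(i)\in J\}$, use $\frac12\diameter(X_i)\le d(X_i,Y_{\lambda(i)})+\frac12\diameter(Y_{\lambda(i)})$ for loops that lose their partner, and get separability by approximating with finitely many macroscopic loops padded by constant loops. The only differences are cosmetic: in (ii) you make the countable dense set explicit (finite multisets from a dense $Q\subset\mathscr{L}_D$), whereas the paper argues that $\iota$ is $1$-Lipschitz on each $\mathscr{L}_D^n$, and your choice of a constant loop inside $D$ is slightly more careful than the paper's constant loop at $0$.
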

\begin{proof}
	For the first point, we only need to verify the triangle inequality. We take $(X_i)_{i\in \N}$, $(Y_j)_{j\in \N}$ , $(Z_k)_{k\in \N} 
	\in M(D)$. With the definition of $d_{M(D)}$, for every $\epsilon >0$, there are two injective functions $(\lambda ^Y_X : J'\to \N)$ and $(\lambda ^Y_Z : J''\to \N)$, such that
	\begin{align*}
		d_{M(D)}& ((X_i)_{i\in \N},(Y_j)_{j\in \N})\\
		\ge& \sup \{ d (X_{\lambda ^Y_X (j)},Y_j), \forall j\in J'; \frac{\diameter(X_i)}{2}, \forall i\in \N \backslash \lambda^Y_X(J'); \frac{\diameter (Y_j)}{2}, \forall j\in \N \backslash J' \} -\epsilon,\\
		d_{M(D)}& ((Z_k)_{k\in \N},(Y_j)_{j\in \N})\\
		\ge& \sup \{ d (Z_{\lambda ^Y_Z (j)},Y_j), \forall j\in J''; \frac{\diameter(Z_k)}{2}, \forall k\in \N \backslash \lambda^Z_X (J''); \frac{\diameter (Y_j)}{2}, \forall j\in \N \backslash J'' \} -\epsilon.
	\end{align*}
	We define an injective function $(\lambda =\lambda ^Y_Z \circ (\lambda ^Y_X)^{-1} : \lambda ^Y_X (J'\cap J'') \to \lambda ^Y_Z(J'\cap J'') \subset \N)$, where $(\lambda ^Y_X)^{-1}$ is well-defined as we restrict it to $\lambda ^Y_X (J'\cap J'')$. We discuss by cases.
	\begin{itemize}
		\item For $ i\in \lambda ^Y_X (J' \cap J'') $,
		\begin{align*}
			d(X_i, Z_{\lambda (i)}) 
			&\le d(X_i, Y_{(\lambda ^Y_X)^{-1} (i)}) + d(Z_{\lambda (i)}, Y_{(\lambda ^Y_X)^{-1} (i)}) \\
			&\le d_{M(D)}((X_i)_{i\in \mathbb{N}}, (Y_j)_{j\in \mathbb{N}}) 
			+ d_{M(D)}((Z_k)_{k\in \mathbb{N}}, (Y_j)_{j\in \mathbb{N}}) + 2\epsilon;
		\end{align*}
		\item For $i\in \N\backslash \lambda ^Y_X (J')$, $\frac{1}{2}\diameter (X_i) \le d_{M(D)} ((X_i)_{i\in \N},(Y_j)_{j\in \N}) +\epsilon$;
		\item For $i\in \lambda ^Y_X (J') \backslash \lambda ^Y_X (J' \cap J'') $, 
		\begin{align*}
			\frac{1}{2}\diameter (X_i) &\le d (X_i, Y_{(\lambda ^Y_X)^{-1} (i)}) + \frac{1}{2}\diameter (Y_{(\lambda ^Y_X)^{-1} (i)})\\ &\le d_{M(D)} ((X_i)_{i\in \N},(Y_j)_{j\in \N}) + d_{M(D)}((Z_k)_{k\in \N},(Y_j)_{j\in \N})+2\epsilon;
		\end{align*}
		
		\item For $k\in \N \backslash \lambda ^Y_Z( J'')$, $\frac{1}{2}\diameter (Z_k) \le d_{M(D)}((Z_k)_{k\in \N},(Y_j)_{j\in \N})+\epsilon$;
		\item For $k\in \lambda ^Y_Z(J'')\backslash \lambda ^Y_Z (J'\cap J'') $, 
		\begin{align*}
			\frac{1}{2}\diameter (Z_k) &\le d (Z_k, Y_{(\lambda ^Z_X)^{-1} (k)}) +\frac{1}{2}\diameter (Y_{(\lambda ^Z_X)^{-1} (k)})\\ & \le d_{M(D)} ((X_i)_{i\in \N},(Y_j)_{j\in \N}) + d_{M(D)}((Z_k)_{k\in \N},(Y_j)_{j\in \N})+2\epsilon.    
		\end{align*}
	\end{itemize}
	Then we have \[ d_{M(D)} ((X_i)_{i\in \N}, (Z_k)_{k\in \N})) \le d_{M(D)} ((X_i)_{i\in \N},(Y_j)_{j\in \N}) + d_{M(D)}((Z_k)_{k\in \N},(Y_j)_{j\in \N})+2\epsilon.\] 
	The triangle inequality follows by letting $\epsilon$ tend to 0.
	
	For the second point, we consider the embedding $\iota : \bigcup _{n\ge0} \mathscr{L}^n_D \to \mathcal{M}(D)$ defined by
	\[ \iota (l_1,\cdots , l_n) =[l_1,l_2,\cdots, l_n, 0,0, \cdots ], \]
	where $\mathscr{L}_D^0 =\{ \emptyset \}$ and 0 denotes the constant loop based at the point 0. In particular, $\iota (\emptyset)= [0,0,\cdots]$. 
	Note that for every $n\in \N _+$, the restricted function $\iota: (\mathscr{L}_D^n, d\otimes \cdots \otimes d) \to (\mathcal{M}(D), d_{\mathcal{M}(D)})$ is 1-Hölder, combined with the separability of  $ (\mathscr{L}_D,d)$, the image $\iota(\mathscr{L}_D)^n$ is separable. It suffices to prove that the image of $\iota$, $\iota(\bigcup _{n\ge0} \mathscr{L}^n_D)$, is dense in $\mathcal{M}(D)$. For $m \in \mathbb{N}_+$ and $l \in \mathscr{L}_D$, define
	\[
	l^m =
	\begin{cases}
		l, & \text{if } \diameter(l) \ge \frac{1}{m}, \\
		0, & \text{otherwise}.
	\end{cases}
	\]
	Then, for $[l_1,l_2, \cdots]\in \mathcal{M}(D)$, the sequence $([l^m_1, l^m_2,\cdots])_{m\in \N_+} $, which lies in  $\iota(\bigcup _{n\ge0} \mathscr{L}^n_D)$, converges to $[l_1,l_2,\cdots] $. 
	
\end{proof}

We call $(\mathcal{M}(D),d_{\mathcal{M}(D)})$ the \textit{space of loop soups}, and refer to the corresponding topology as the loop soup topology. 
\begin{remark}
	For a fixed mesh size $\delta>0$, the total mass of the random walk loop measure is finite. Then the random walk loop soup $\mathcal{L}^{\#\delta}$ is almost surely finite. In this case, within the space of loop soups, we see that the random walk loop soup lies in the image of the embedding function $\iota$ defined in the proof of Lemma \ref{space of loop soups}. In particular, the empty loop soup corresponds to $\iota(\emptyset)$.
\end{remark}
\begin{remark}
	In this topology, we are only concerned with the macroscopic loops. In fact, as $\delta\to 0$, the total mass of the loop measure tends to infinity, but the total mass of loops with diameter at least $\epsilon$ for a given $\epsilon$ is finite, so the number of macroscopic loops is a.s. finite. If we consider the Brownian loop soup, there will be a.s. infinitely many tiny loops, which we do not care about.
\end{remark}

\subsection{Loop-erased random walk}
Suppose $(X_t)_{0\le t\le T}$ is a random walk on the planar graph $\Gamma$ started from a vertex $v_0$, where $T>0$. While discussing loop erasure, we return to the usual connection and regard the random walk as a c\`adl\`ag process. We denote the left limit at time $s$ by $X(s-)$ or $X_{s-}$. For $0\le s<t$, we denote the trajectory between times $s$ and $t$ by $X^{\#\delta} [s,t]$. 

\begin{definition}[Loop erasure]
	The \textbf{loop erasure} of the random walk $(X_t:0\le t\le T)$ is a self-avoiding path $(Y_n: 0\le n\le S)$ obtained by erasing the loops in chronological order. Precisely, let $Y_0= X_0$, define $T_0 =\sup \{ t \in [0,T] :  X_t =  Y_0 \}$, then let $Y_1= X_{T_0}$. Recursively, for $k\ge 1$, if $T_{k-1} <T$, we define $T_k =\sup \{ t \in [T_{k-1}, T] :  X_t =  Y_k \}$, then let $Y_{k+1} =X_{T_k}$. Continue until $T_{S-1}=T$, define $Y_S= X_T$. We denote $\LE$ to be the loop erasure operation: $\LE(X[0,T])= Y[0,S]$. 
	We call $Y[0,S]$ \textbf{loop erased random walk} (LERW).
\end{definition}
For each $k=0,\cdots, S$, the stopping time $T_k$ is the last time the random walk visits the point $Y(k)$. We denote this time by the last visit operator $\lv(k)$. 
Let $\gamma_1,\gamma_2$ be two curves such that the endpoint of $\gamma_1$ coincides with the starting point of $\gamma_2$. We define their concatenation, denoted by $\gamma_1\oplus \gamma_2$, by connecting these two curves with the common point. Immediately, for $k\in \{ 0, 1,\cdots, S-1\}$, the path $l_k:=X[T_{k-1}, T_k)$ is a loop, and we can decompose the random walk path into the concatenation of the erased loops and the edges in the loop-erased path: 
\begin{equation}\label{LERW}
	X[0,T] = l_0 \oplus Y[0,1] \oplus l_1 \oplus Y[1,2] \oplus \cdots \oplus l_{S-1} \oplus Y[S-1,S].
\end{equation}
\begin{remark}
	There are two types of loops erased in LERW: the first are simple loops removed in chronological order by the erasure procedure; the second are loops rooted at points along the simple curve $Y[0,S]$, namely the loops $X[T_{k-1}, T_k)$ for $k=0,\cdots , S-1$. We refer to the latter as the erased loops in LERW.
\end{remark}

\begin{proposition}[loop addition~\cite{Lawler_Limic_2010}]\label{loop addition}
	Suppose $\gamma=(\gamma_0,\gamma_1,\cdots ,\gamma_k)$ is a self-avoiding path from $v$ to the boundary $\partial D$. Let $(X^v, \mathcal{L}^{\Gamma}_D)$ be an independent pair of random walk from $v$ and a random walk loop soup restricted to $D$. For each $0\le j\le k$, let $\bar{l}_{1,j},\bar{l}_{2,j},\cdots,\bar{l}_{n(j),j}$ denoted the unrooted loops in $\mathcal{L}_D^{\Gamma}$ that intersect with $\gamma_j$ but do not intersect $\{\gamma_0,\cdots,\gamma_{j-1}\}$. The loops are listed in the time order. For each loop $\bar{l}_{i,j}$, we choose uniformly a representative $\tilde{l}_{i,j}$  rooted at $\gamma_j$. Then the rooted loop 
	\[ \tilde{l}_j:= \tilde{l}_{1,j}\oplus \cdots\oplus \tilde{l}_{n(j),j} \]
	has the same law as $l_j$ defined in Equation \eqref{LERW}, conditioned on the event that $LE(X^v)=\gamma$.
	
	In other words, if we add the loops to $\gamma$, we have a path
	\[ \tilde{l}_0 \oplus \gamma[0,1] \oplus \tilde{l}_1 \oplus \gamma[1,2] \oplus \cdots \oplus \tilde{l}_{k-1} \oplus \gamma[k-1,k] \]
	which has the same law with the random walk $X^v$ conditioned  on $LE(X^v)=\gamma$. 
\end{proposition}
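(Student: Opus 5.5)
The plan is to compute, for a fixed self-avoiding path $\gamma=(\gamma_0,\dots,\gamma_k)$ and fixed rooted loops $\ell_0,\dots,\ell_{k-1}$ with $\ell_j$ rooted at $\gamma_j$ and avoiding $\{\gamma_0,\dots,\gamma_{j-1}\}\cup\partial\Gamma^{\#\delta}_D$, the probability of the corresponding event on each side and compare. By \eqref{LERW}, on the random walk side the event $\{X^v=\ell_0\oplus\gamma[0,1]\oplus\cdots\oplus\ell_{k-1}\oplus\gamma[k-1,k]\}$ forces $LE(X^v)=\gamma$ and $l_j=\ell_j$ for all $j$. This decomposition is unique because $\ell_j$ avoids the earlier $\gamma_i$ and $\gamma_j$ is left for the last time at the end of $\ell_j$. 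Its probability is simply the product of transition probabilities
\[
\prod_j q(\ell_j)\,\prod_j q(\gamma_j,\gamma_{j+1}),
\]
where $q(\ell)$ is the product of $q$ along $\ell$. Summing over all admissible $(\ell_j)$ gives
\[
\mathbb P(LE(X^v)=\gamma)=\prod_j q(\gamma_j,\gamma_{j+1})\prod_j G_{D_j}(\gamma_j,\gamma_j),
\]
with $D_j=D\setminus\{\gamma_0,\dots,\gamma_{j-1}\}$ and $G$ the Green's function of the discrete chain killed at the boundary. The conditional law of $(l_j)_j$ therefore factorizes into independent loops $l_j$, each with law $q(\ell)/G_{D_j}(\gamma_j,\gamma_j)$ on loops rooted at $\gamma_j$ in $D_j$.

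On the loop soup side, the families $\{\bar l_{i,j}\}_i$ for different $j$ come from disjoint subsets of unrooted loops: those in $D_j$ hitting $\gamma_j$. By the Poisson property they are independent, so it suffices to treat one $j$. I will show that the law of $\tilde l_j$ is $q(\ell)/G_{D_j}(\gamma_j,\gamma_j)$. Let $x=\gamma_j$. Decompose a rooted loop at $x$ in $D_j$ uniquely into its excursions from $x$. For an unrooted loop visiting $x$ exactly $m$ times, a uniform rooting at $x$ among its $m$ visits combined with the $1/|l|$ weight yields mass $\tfrac1m q(\ell)$ on each rooted-at-$x$ representative, counted with multiplicity. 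Here I am following the standard computation, and a periodicity subtlety (loops that are powers of a shorter loop) must be checked; the multiplicity factors cancel exactly.

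Writing $f$ for the total $q$-mass of single excursions from $x$ to $x$ in $D_j$, the soup restricted to loops hitting $x$ thus has total mass $\sum_{m\ge1}f^m/m=-\log(1-f)$. Conditioned on the number of soup loops and their excursion counts, the concatenation in time order is a uniformly random ordering. Since concatenating rooted loops only lists their excursions, the question reduces to a generating function identity: the probability that the concatenation is a given sequence of $n$ excursions $e_1,\dots,e_n$ is
\[
(1-f)\prod_{i=1}^n q(e_i).
\]
This follows from $\exp\bigl(\sum_m f^m/m\bigr)=(1-f)^{-1}$ applied at the level of marked excursions, i.e.\ the exponential formula for compositions into cycles. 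Since $1-f=1/G_{D_j}(x,x)$, this matches the random walk side.

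The main obstacle is the bookkeeping in this last step. One has to check that the time ordering plus the uniform choice of rooting yields exactly the uniform measure on ordered excursion sequences, including when several soup loops, or a single loop with rotational symmetry, produce the same concatenated word. I would handle it by working with the Poissonized time-indexed soup $\mathcal L_\alpha$ at $\alpha=1$ and arguing via the standard cycle-lemma counting (equivalently, Le Jan's identity for the occupation of loops at a point). If that bookkeeping becomes messy, I would simply invoke \cite{Lawler_Limic_2010}.
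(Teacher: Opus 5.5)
Your argument is correct and is essentially the standard proof in \cite{Lawler_Limic_2010}; the paper only cites that reference and gives no proof of its own. The ingredients match: the factorization $\mathbb{P}(\LE(X^v)=\gamma)=\prod_j q(\gamma_j,\gamma_{j+1})\,G_{D_j}(\gamma_j,\gamma_j)$ into independent loops with law $q(\ell)/G_{D_j}(\gamma_j,\gamma_j)$, Poisson independence across $j$, and the identity that the soup mass of loops in $D_j$ through $\gamma_j$ equals $-\log(1-f)=\log G_{D_j}(\gamma_j,\gamma_j)$.

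The bookkeeping you flag as the main obstacle is harmless. A given word of $n$ excursions, together with a composition $n=m_1+\cdots+m_r$, determines the time-ordered sequence of rooted loops uniquely. After uniform rooting, a rooted loop with $m$ excursions carries mass $q(\ell)/m$, because the periodicity factor cancels. Hence the probability that the concatenation equals that word is $(1-f)\prod_i q(e_i)\sum_{r}\frac{1}{r!}\sum_{m_1+\cdots+m_r=n}\prod_i m_i^{-1}=(1-f)\prod_i q(e_i)$, since the double sum is the coefficient of $z^n$ in $(1-z)^{-1}$, which equals $1$.
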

A similar version for the Markovian loop soup is stated in Proposition~5.4 of \cite{chang2014markovloopsdiscretespaces}.

\paragraph*{\textbf{Convergence of LERW}}
Lawler, Schramm and Werner~\cite{lawler2003conformalinvarianceplanarlooperased} proved that the scaling limit of a LERW on the square lattice is radial $\SLE_2 $. Yadin and Yehudayoff ~\cite{Yadin_2011} generalized this result to more general planar graphs, assuming an invariance principle. In the chordal case, Suzuki~\cite{suzuki2014convergencelooperasedrandom} proved the convergence of a LERW excursion to chordal $\SLE_2$, and this was further generalized by Uchiyama~\cite{uchiyama2017boundarybehaviourrwsplanar}. 

We define a discrete domain as a union of faces of $\Gamma^{\# \delta}$ along with the edges and vertices incident to them. We say a discrete domain is simply connected if the union of its faces and non-boundary edges and vertices forms a simply connected domain in $\mathbb{C}$.  

Here we refer to Theorem~3.6 and Corollary~3.7 of \cite{berestycki2024dimersriemannsurfacesii} to state the convergence of LERW on the chordal case.
\begin{theorem}[\cite{berestycki2024dimersriemannsurfacesii}]\label{chordal}
	Let $D$ be a simply connected domain such that $\Bar{D}\subset \mathbb{D}$. Let $\Bar{D}^{\#\delta}$ be a sequence of simply connected discrete domains with $D^{\#\delta}$ being its interior. Assume that $\mathbb{C} \backslash D^{\#\delta}$ is uniformly locally connected. Let $p_0\in D$ and suppose that $D^{\#\delta }$ converges in the Carath\'eodory sense to $D$: if $\phi $(resp. $\phi _{\#\delta}$) is the unique conformal map sending $D$(resp. $D_{\#\delta}$) to $\mathbb{D}$ such that $\phi(p_0)=0$ and $\phi '(p_0) >0$(resp. $\phi _{\#\delta}(p_0) =0$ and $\phi _{\#\delta}'(p_0)>0$), then $\phi _{\#\delta}$ converges to $\phi$ uniformly over compact subsets of $D$. Suppose that $a^{\#\delta}$, $b^{\#\delta} $ are two boundary points on $\Bar{D}^{\#\delta} $ such that $\phi _{\#\delta} (a^{\#\delta}) \to \tilde{a}\in \partial \mathbb{D}$ and $\phi _{\#\delta} (b^{\#\delta}) \to \tilde{b}\in \partial \mathbb{D}$ with $\tilde{a}\not= \tilde{b}$.
	
	Let $X^{\#\delta}$ be a random walk subject to the assumptions in Section \ref{graph} from $a^{\#\delta}$ conditioned to take its first step in $D^{\#\delta}$ and to leave $D^{\#\delta}$ at $b^{\#\delta}$. 
	Then $\LE(X^{\#\delta}) $ converges to chordal $\SLE_2$ from $a$ to $b$, where $a=\phi^{-1} (\tilde{a})$ and $b=\phi^{-1}(\tilde{b)}$.
\end{theorem}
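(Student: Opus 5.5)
The plan is to follow the strategy of Lawler--Schramm--Werner, as adapted to general graphs by Yadin--Yehudayoff and by Berestycki--Laslier--Ray. The argument has three parts: tightness of the discrete curves, a discrete martingale observable that converges by the invariance principle, and identification of the driving function of any subsequential limit as $\sqrt{2}$ times a standard Brownian motion.

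\textbf{Tightness.} I would first establish tightness of $\LE(X^{\#\delta})$ in the space $(\mathcal{C},d_{curve})$, by verifying an Aizenman--Burchard / Kemppainen--Smirnov type annulus-crossing condition. For a loop-erased walk, crossing an annulus $A(z,r,R)$ many times requires the underlying walk to make corresponding crossings and return without closing the loop. The uniform crossing estimate (iii) yields uniformly positive probabilities of disconnecting each dyadic sub-annulus. This gives polynomial decay in $R/r$ of the probability of $k$ crossings, uniformly in $\delta$. Near the boundary, the conditioning to exit at $b^{\#\delta}$ has to be handled. There I would use Lemma~\ref{beurling estimate} together with the uniform local connectedness of $\mathbb{C}\setminus D^{\#\delta}$, which prevents the curve from making long excursions along $\partial D^{\#\delta}$. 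Lemma~\ref{uniform small edge} ensures the linearized paths are genuine continuous curves with vanishing edge-scale error. Any subsequential limit is then a Loewner chain with some continuous driving function $W_t$ in the half-plane coordinates given by $\phi$ composed with a Möbius map.

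\textbf{Martingale observable.} Next I would construct the observable. Let $\gamma_n$ be the first $n$ steps of the LERW, and fix an interior vertex $v$. By the domain Markov property of LERW, the remainder of the walk is a random walk from the tip, conditioned to exit at $b^{\#\delta}$ and to avoid $\gamma_n$. Hence the ratio of the discrete Green's function at $v$ to the discrete Poisson kernel at $b^{\#\delta}$, both taken in $D^{\#\delta}\setminus\gamma_n$, is a martingale in $n$. This is the Lawler--Schramm--Werner observable, in its chordal form as used by Zhan and Suzuki. Because the chain may be nonreversible, I would phrase the observable entirely in terms of hitting probabilities of the forward walk, which is what the Yadin--Yehudayoff approach does. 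Using the invariance principle (i), with Carathéodory convergence and Beurling-type control at the tip and at $b$ to justify the boundary behavior, the normalized observable converges uniformly on compacts to the continuum Poisson-kernel ratio in $D\setminus\gamma[0,t]$. Writing that limit in terms of the Loewner map $g_t$ gives an explicit function of $g_t(v)-W_t$ and the half-plane capacity $t$.

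\textbf{Identification of the limit.} Expanding the martingale property to second order in $g_t(v)$, for two or more interior points $v$, shows that $W_t$ and $W_t^2-2t$ are continuous local martingales. By Lévy's characterization, $W=\sqrt{2}B$, so the limit is chordal $\SLE_2$ from $a$ to $b$. Since every subsequential limit is the same, the whole sequence converges.

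\textbf{Main obstacle.} I expect the hardest step to be the uniform convergence of the discrete Poisson-kernel ratio near the marked boundary point $b^{\#\delta}$ and near the tip of the slit. The invariance principle controls interior harmonic measure but not derivatives, so the ratio must be obtained from hitting probabilities of small boundary arcs near $b$. My first attempt would be a boundary Harnack argument built from the crossing estimate (iii), combined with a choice of a local time scale (stopping at capacity increments $\epsilon$) so that only macroscopic quantities enter.
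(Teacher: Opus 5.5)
The paper gives no proof of this statement. It is imported from \cite[Theorem~3.6, Corollary~3.7]{berestycki2024dimersriemannsurfacesii}, which rests on the Poisson-kernel convergence of Yadin--Yehudayoff \cite{Yadin_2011} and on the chordal boundary analysis of \cite{suzuki2014convergencelooperasedrandom,uchiyama2017boundarybehaviourrwsplanar}. Your three-step architecture (tightness, martingale observable, L\'evy characterization) is the right one.

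Your observable is also sound as a martingale. Read it as $N_n(v)=G_{D_n}(\gamma(n),v)/H_{D_n}(\gamma(n),b^{\#\delta})$ with $D_n=D^{\#\delta}\setminus\gamma[0,n-1]$. This is an exact martingale for an arbitrary chain, by the one-step law of the LERW and a last-exit decomposition at the tip.

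\textbf{The gap.} The step that carries essentially all the difficulty, identifying the scaling limit of $N_n(v)$, is not done, and the route you suggest does not deliver it. In $N_n(v)$ the tip is the \emph{evaluation point} of a ratio of two different positive harmonic functions of the starting point, $x\mapsto\mathbb{P}_x(\text{hit } v \text{ before leaving } D_n)$ and $x\mapsto H_{D_n}(x,b^{\#\delta})$. Both vanish on the slit, so the limit of the ratio is a ratio of normal derivatives at a microscopic, rough, random tip. The invariance principle controls only macroscopic hitting events. The crossing estimate gives Harnack- and Beurling-type two-sided bounds, but a boundary Harnack argument built from it does not by itself upgrade comparability to \emph{convergence} of the ratio at the tip. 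That is the hard boundary problem your last paragraph names but leaves open.

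You also cannot fall back on the Yadin--Yehudayoff mechanism, in which the observable is the Poisson kernel of the tip viewed as a forward-harmonic function of the interior point $v$. For a nonreversible chain, $v\mapsto G_{D_n}(\gamma(n),v)$ satisfies $\sum_u G_{D_n}(\gamma(n),u)q(u,v)=G_{D_n}(\gamma(n),v)$ for $v\neq\gamma(n)$. So it is harmonic for the time-reversed walk, not for the walk itself, and the hypotheses say nothing about the time-reversed walk.

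\textbf{The standard fix.} The way out, and the actual form of the Lawler--Schramm--Werner observable, is to grow the curve from its far end $b^{\#\delta}$. Write $q(\eta)=\prod_i q(\eta_i,\eta_{i+1})$ and let $Q_A$ be the transition matrix restricted to $A$. Then $\mathbb{P}(\LE(X)=\eta)$ is proportional to $q(\eta)\det(I-Q_{D\setminus\eta})/\det(I-Q_D)$. The determinant factor does not depend on the order of the vertices of $\eta$. Consequently, for \emph{any} chain, the initial part of $\LE(X^{\#\delta})$ given its last $n$ steps $\xi$ is the loop-erasure of the walk from $a^{\#\delta}$ in $D^{\#\delta}\setminus\xi$ conditioned to exit at the tip $\xi_0$. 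Moreover, $H_{D^{\#\delta}\setminus\xi}(v,\xi_0)/H_{D^{\#\delta}\setminus\xi}(a^{\#\delta},\xi_0)$ is a martingale which is harmonic in $v$ for the forward walk.

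Its interior behaviour is exactly what the Yadin--Yehudayoff Poisson-kernel convergence controls. The normalization at the marked point $a^{\#\delta}$ is the boundary issue handled in the chordal works. Reversibility of chordal $\SLE_2$ then returns the curve from $a$ to $b$.

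As written, your proposal is a roadmap whose decisive analytic input is missing. The paper sidesteps all of this by citation.
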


\subsection{Wilson's algorithm}\label{section wilson}
We now describe Wilson's algorithm. 
\begin{definition}[Wilson's algorithm~\cite{10.1145/237814.237880}]
	Given the graph $\Gamma$, the boundary $\partial \Gamma$ and an ordering of the vertices $(v_0, v_1, \cdots)$ of $\Gamma$, \textbf{Wilson's algorithm} runs as follows.
	\begin{itemize}
		\item We start from $v_0$ and perform a loop-erased random walk until a boundary vertex is hit.
		\item Recursively, we start from the next vertex in the ordering (we stop if it doesn't exist) which is not included in what we have sampled so far, and perform a loop-erased random walk from it until it hits the boundary or the part of vertices we have sampled before.
	\end{itemize}
\end{definition}
\begin{proposition}[Wilson~\cite{10.1145/237814.237880}]
	Wilson's algorithm generates the wired uniform spanning tree. In particular, the law of the resulting spanning tree is independent of the ordering of vertices.
\end{proposition}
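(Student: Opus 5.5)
The plan is to use Wilson's original ``stacks of arrows'' (cycle popping) argument on the finite graph $\Gamma_D:=\Gamma^{\#\delta}\cap \bar D$, with all vertices of $\partial\Gamma^{\#\delta}_D$ wired into a single root. Bounded density and boundedness of $D$ make the number of interior vertices finite. For a nonreversible chain, ``wired uniform spanning tree'' has to be read as the wired spanning arborescence oriented towards the boundary, with weight $\prod_{(u,v)\in T} q(u,v)$. This is the uniform (or weighted) spanning tree in the reversible case, and it is what I would prove.

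\emph{Construction.} To each interior vertex $u$ attach an infinite i.i.d.\ stack of arrows $(u,\sigma_1(u)),(u,\sigma_2(u)),\dots$, where $\sigma_i(u)$ has law $q(u,\cdot)$. Each arrow is ``coloured'' by its position $i$ in the stack. The top arrows form an oriented graph in which every interior vertex has out-degree one, so either it is an arborescence rooted at the boundary or it contains an oriented cycle. \emph{Popping} a cycle means removing its top arrows, which exposes the next arrows. I would check that running Wilson's algorithm from $v_0,v_1,\dots$, using for each step out of $u$ the next unused arrow of $u$'s stack, is exactly a particular popping sequence. Each chronological loop erasure pops one cycle of top arrows, and the surviving LERW branch consists of top arrows that are never popped afterwards, because later walks stop on hitting the current tree. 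Hence the tree output by Wilson's algorithm is the final arborescence of this popping sequence. The algorithm terminates a.s. because on a finite connected graph the killed walk hits the boundary a.s.

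\emph{Key lemma (order independence).} I expect the main step to be the diamond-lemma statement: for a fixed realisation of the stacks, either every popping procedure runs forever, or every one pops the same finite set of \emph{coloured} cycles and ends at the same arborescence. The proof is by induction on the number of pops. Suppose $C$ is a coloured cycle that is poppable at the start, and $C_1,\dots,C_k$ is another popping sequence, with $C$ not yet popped. Show that $C$ remains poppable after each $C_j$, since a cycle sharing a vertex with $C$ would have to use an arrow of $C$, and the colours force $C_j=C$. Therefore $C$ must appear somewhere in any terminating sequence. Moving it to the front does not change the multiset of coloured cycles popped, and induction concludes. The delicate point is the colouring bookkeeping, which ensures that a coloured cycle popped in one order is literally the same object in the other.

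\emph{Law of the tree.} Fix a finite set $\mathcal C$ of coloured cycles and an arborescence $T$. The event that exactly $\mathcal C$ is popped and the final tree is $T$ prescribes a fixed set of stack entries. Its probability is therefore $\prod_{C\in\mathcal C}\prod_{e\in C} q(e)\cdot \prod_{e\in T} q(e)$. Moreover, for any $T'$ the configuration with the same popped cycles and $T'$ in the exposed positions is also valid, so the set of admissible $\mathcal C$ does not depend on $T$. Summing over $\mathcal C$ gives $\mathbb P(\text{tree}=T)\propto\prod_{e\in T}q(e)$. The popped cycles are also independent of the tree. By the lemma, this law does not depend on the ordering $(v_0,v_1,\dots)$, which proves both claims of the proposition.
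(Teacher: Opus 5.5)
Your proposal is correct and is essentially Wilson's own cycle-popping argument (stacks of coloured arrows, order-independence of popping, product form of the law). The paper itself gives no proof and simply cites that reference, so you are following the same route; your reading of ``wired uniform spanning tree'' as the arborescence weighted by $\prod_{(u,v)\in T}q(u,v)$ is the right one for the nonreversible chains allowed here, and the two details you leave implicit---that the cylinder event on the prescribed stack entries coincides with ``exactly $\mathcal C$ is popped and $T$ is left'' (which uses your order-independence lemma), and that in the oriented graph every interior vertex must be able to reach the wired boundary for the walks to terminate---are standard.
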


For the k-th LERW performed in Wilson's algorithm, following the decomposition given by Equation \eqref{LERW}, we denote the set of loops erased in the k-th branch by $\mathbb{L}_k = \{ l^k_0, l^k_1, \cdots \}$. Then the set of loops erased in Wilson's algorithm for the given ordering is 
\[ \mathbb{L} =\bigcup _{k\ge 1} \mathbb{L}_k. \]
For $\Gamma=\Gamma\dis $, we write $\mathbb{L}\dis$, $\mathbb{L}\dis_k$ for the corresponding collections.
We remark that the law of $\mathbb{L}^{\#\delta}$ depends on the ordering we choose. In the following, we fix the \textbf{Good Ordering} defined below as the reference ordering of vertices in Wilson's algorithm; for simplicity, we do not indicate the ordering in the notation $\mathbb{L}$. 

\textbf{Good ordering:}
Consider $\{ 6^{-j} \mathbb{Z}^2\}_{j\ge 0}$, a sequence of scaling of the square lattice which divides the plane into square cells. At step $j$, pick a vertex from each cell $f$ of $6^{-j} \mathbb{Z}^2$ which is closest to the center of the cell (if two points have the same distance from the center, we use the lexicographic order) and is not chosen in any previous step. Denote the set of vertices picked in step $j$ by $Q_j$. We order the vertices of $Q_j$ according to the lexicographic order of their corresponding cell. We stop after we have sampled all vertices. We order all the vertices by $(Q_0, Q_1, \cdots)$.

\begin{remark}
	The result in Section \ref{chapter GWA} doesn't depend on the ordering. We introduce the good ordering to estimate the remaining branches, as in Schramm's finiteness theorem (Theorem \ref{theorem schramm}). 
\end{remark}

\paragraph*{\textbf{Relationship between random walk loop soup and Wilson's algorithm}}
We can add the loops in RWLS to an independent LERW by using Proposition \ref{loop addition}, and obtain a random walk. Moreover, given an ordering of vertices, we can iteratively add the loops in RWLS to the branches of a wired uniform spanning tree such that the marginal law of each random walk path is a real random walk performed in Wilson's algorithm. We state it as follows:
\begin{corollary}\label{loop addtion2}
	Given the graph $\Gamma$ and an ordering of vertices $(v_0,v_1, \cdots )$ with any two vertices being different. Let $(\mathcal{L}^{\Gamma}, \mathcal{T})$ be an independent pair of random walk loop soup and the wired uniform spanning tree. Let $(\gamma^0,\gamma^1,\cdots )$ be the self-avoiding path sampled in $\mathcal{T}$. Precisely: Assume that $\gamma^0$ is the branch in $\mathcal{T}$ emanating from $v_0$ to $\partial D$. Recursively, for $k\ge 1$, let $\gamma^k$ be the branch in $\mathcal{T}$ emanating from the first unsampled vertex in $\Gamma\backslash(\cup _{i<k}\gamma^i)$ to $\partial D \cup \gamma^0\cup\cdots \cup \gamma^{k-1}$.
	
	Then we can couple $(\mathcal{L}^{\Gamma}, \mathcal{T})$ with a sequence of random walks  $X^1, X^2,\cdots$  such that:
	\begin{itemize}
		\item For each $k\ge 0$, $LE(X^k)=\gamma^k$.
		\item For each $k\ge 0$, the random walk path $X^k$ is obtained by adding loops in $\mathcal{L}^{\Gamma\backslash(\cup _{i<k}\gamma^i)}$  to $\gamma^k$.
	\end{itemize}
\end{corollary}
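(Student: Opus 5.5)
We build the coupling inductively, branch by branch, using Proposition~\ref{loop addition} together with the restriction and Poisson properties of the random walk loop soup. The key structural fact we rely on is that $\mathcal{L}^{\Gamma}$ splits into independent parts. Write $\Gamma_0=\Gamma$ and $\Gamma_k=\Gamma\backslash(\cup_{i<k}\gamma^i)$. Since $\mathcal{L}^{\Gamma}$ is a Poisson point process with intensity $\mu^{\Gamma}$, restricting $\mu^{\Gamma}$ to the loops that avoid a given deterministic set gives the loop measure of the graph with that set removed and killed there. Hence, for any fixed self-avoiding path $\gamma$, the loops of $\mathcal{L}^{\Gamma_k}$ that intersect $\gamma$ and those that avoid $\gamma$ are independent. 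The latter are exactly a loop soup $\mathcal{L}^{\Gamma_k\backslash\gamma}$.

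\textbf{Step 0.} Take $\gamma^0$, the branch of $\mathcal{T}$ from $v_0$. By Wilson's algorithm, $\gamma^0$ has the law of $\LE$ of a random walk from $v_0$ killed at $\partial D$. Apply Proposition~\ref{loop addition} with the pair $(\gamma^0,\mathcal{L}^{\Gamma})$: attach to each $\gamma^0_j$ the loops of $\mathcal{L}^{\Gamma}$ that hit $\gamma^0_j$ but not $\gamma^0_0,\dots,\gamma^0_{j-1}$, rooted uniformly at $\gamma^0_j$ and concatenated in the Poisson time order. Call the resulting path $X^0$ (the statement's indexing starts at $X^1$; we write $X^0$ to match $\gamma^0$). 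Conditionally on $\gamma^0$, $X^0$ has the law of a random walk conditioned on $\LE=\gamma^0$. Averaging over $\gamma^0$, $X^0$ is a random walk from $v_0$ with $\LE(X^0)=\gamma^0$.

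\textbf{Inductive step.} The loops used in Step~0 are exactly the loops meeting $\gamma^0$. The unused loops, those in $\Gamma_1=\Gamma\backslash\gamma^0$, form a loop soup $\mathcal{L}^{\Gamma_1}$. Conditionally on $\gamma^0$, this soup is independent of $X^0$ and of the rest of $\mathcal{T}$. By the Markov property of Wilson's algorithm, conditionally on $\gamma^0$ the next branch $\gamma^1$ is distributed as the loop-erasure of a random walk on $\Gamma_1$ started at the first unsampled vertex and killed at $\partial D\cup\gamma^0$. It is also independent of $\mathcal{L}^{\Gamma_1}$. We now repeat Step~0 on the graph $\Gamma_1$, whose boundary is $\partial D\cup\gamma^0$, to produce $X^1$. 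Iterating gives $X^k$ built from loops of $\mathcal{L}^{\Gamma_k}$ with $\LE(X^k)=\gamma^k$. Since every loop of $\mathcal{L}^{\Gamma}$ is used at most once, the construction is consistent on a single probability space and preserves the joint law of $(\mathcal{L}^{\Gamma},\mathcal{T})$.

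\textbf{Main obstacle.} The delicate point is the conditional independence used in the inductive step. We must check that, given $\gamma^0,\dots,\gamma^{k-1}$, the residual soup $\mathcal{L}^{\Gamma_k}$ is still a Poisson soup on $\Gamma_k$ independent of the remaining tree. The danger is that $\gamma^0,\dots,\gamma^{k-1}$ are random and are not independent of the loops already consumed. This is resolved as follows. $\mathcal{T}$ is independent of $\mathcal{L}^{\Gamma}$ by hypothesis. Conditioning on $\mathcal{T}$ fixes all the $\gamma^i$ deterministically. For deterministic disjoint sets, the Poisson splitting of $\mathcal{L}^{\Gamma}$ according to the first $\gamma^i$ each loop meets then yields independent sub-soups, with $\mathcal{L}^{\Gamma_k}$ restricted to $\Gamma_k$. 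Finally, applying Proposition~\ref{loop addition} on each $\Gamma_k$ with the deterministic path $\gamma^k$ gives the stated marginals.
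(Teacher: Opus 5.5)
Your proposal is correct and takes the same route as the paper, which justifies this corollary only by a one-line remark: iterate the loop-addition proposition branch by branch along Wilson's algorithm for the given ordering. You supply the details the paper leaves implicit, and your treatment of conditional independence is the right way to make that remark rigorous: you condition on $\mathcal{T}$, then Poisson-split $\mathcal{L}^{\Gamma}$ by the first branch $\gamma^i$ each loop meets, so that the sub-soups used for different $X^k$ are independent and each is a soup on $\Gamma_k$.
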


\section{Greedy Wilson's algorithm}\label{chapter GWA}
In this section, we study the set of loops erased in Wilson's algorithm. We introduce a variant of Wilson's algorithm called the \emph{Greedy Wilson's Algorithm} (GWA), which is inspired by the algorithm used in \cite{berestycki2024dimersriemannsurfacesii} for sampling a cycle-rooted spanning forest on a graph embedded into a specific compact Riemann surface. While in that paper the authors use the open sets in the chart, we instead use the Euclidean balls of fixed and small radius. We define the loops erased in the GWA, which naturally approximate those in Wilson's algorithm. 
In Section \ref{continuum GWA}, also inspired by \cite{berestycki2024dimersriemannsurfacesii}, we introduce the continuum version of GWA. 
Using the convergence of LERW, we prove that the triple consisting of a self-avoiding path, a local random walk path, and a concatenated loop-erased path in GWA converges in law to that of the continuum version. This, in turn, implies that the set of loops erased in finite branches of the GWA converges to that of the continuum version. See Theorem \ref{scaling limit of XAY} and Corollary \ref{k branches} for more details.

\subsection{Discrete version of Greedy Wilson's algorithm}
In this section, we define the Greedy Wilson's algorithm such that we can ``ignore'' most tiny loops with diameter smaller than $\epsilon$. 

\begin{definition}[Greedy Wilson's algorithm]
	Given $\epsilon >0$, the graph $\Gamma =\Gamma ^{\# \delta }$, the boundary vertices $\partial$ and an ordering of vertices $(v_0, v_1, \cdots)$. Take $r(\epsilon)\in (0,\epsilon)$. Suppose that $\delta <\frac{r(\epsilon)}{100}$. 
	\textbf{The Greedy Wilson's algorithm} is as follows. 
	\begin{enumerate}[label=(\arabic*)]
		\item Let $X^1$ be the simple random walk started from $v_0$ and stopped when hitting the boundary $\partial$. We denote this stopping time by $T^1$ and denote the loop erasure of $X^1$ by $Y^1=Y^1[0, S^1]=\LE(X^1[0, T^1])$. We define a sequence of stopping time $(\tau^1_1,\cdots )$, SRW curves $(X^1_1,\cdots )$, self-avoiding curves $(Y^1_1, \cdots )$ and the portion of the curves $(A^1_1, \cdots)$  as follows.
		\begin{enumerate}[label=(\roman*)]
			\item Define $\tau ^1_1 := \inf \{ t \ge 0: X^1(t) \in \partial \ \text{or} \  X^1(t) \notin B(v_0, \epsilon) \}$. 
			Denote $Y^1_1 =Y^1_1 [0,t^1_1] =\LE (X^1[0,\tau^1_1 ])$ as the loop-erasure up to this point. Define the last visit time to $v_0$ as $\theta ^1_1 := \sup \{ t\in [0,\tau ^1_1]: X^1(t)=v_0\} $, and the corresponding time in $Y^1_1$ as $s^1_1=0$. Denote $X^1_{1}=\{ v_0\}$ and $A^1_1=Y^1_1$.
			\item Inductively, assume that we have defined $\tau^1_k $, $X^1 _k$, $ Y^1_k $ and $ A^1_k$. If  $X^1(\tau^1_k) \in \partial$, we stop the construction of the first branch;
			if $X^1(\tau_k^1)\in \overline{B(v_0,r(\epsilon))}$, we stop the algorithm along this branch, then output ``\textbf{ERROR}'' and the self-avoiding path $Y^1$. 
			
			Otherwise, define $\tau ^1_{k+1} := \inf  \{ t> \tau^1_k:  X^1(t) \in \partial \ \text{or} \  X^1(t) \notin B(X^1_1(\tau^1_k), r(\epsilon)) \} $. 
			Define the self-avoiding path $Y^1_{k+1} =\LE(X^1[0, \tau ^1_{k+1}])=Y^1_{k+1}[0,t^1_{k+1}].$
			Define the last visit point of $X^1[0, \tau^1_{k+1}]$ to $Y^1_k$ as $Y_k^1(s_{k+1}^1)$ where $s^1_{k+1}=\min  \{s\in [0,t^1_k] : Y_{k}^1(s)\in X^1[\tau^1_{k},\tau^1_{k+1}]\}$.
			Define the corresponding last visit time in $X^1[0, \tau^1_{k+1}]$ as $\theta ^1_{k+1} = \sup \{ t\in [\tau ^1_k, \tau ^1 _{k+1}] : X^1_k (t) = Y_k^1(s_{k+1}^1) \} $, and note that $Y^1_k(s^1_{k+1})=X^1(\theta^1_{k+1}-)$.
			Now define the new SRW curve $X^1_{k+1}=X^1[\tau^1 _k, \theta^1_{k+1})$. Define $A^1_{k+1} =\LE (X^1[\theta^1_{k+1},\tau^1_{k+1}])$. A direct result is:
			\[Y^1_{k+1} =Y^1_k[0,s^1_{k+1}]\oplus A^1_{k+1}.\]
		\end{enumerate}
		\item Having defined branches $(Y^1, Y^2, \cdots, Y^m)$, if $Y^1 \cup \cdots \cup Y^m =\Gamma$, i.e. if we have sampled all vertices, we stop; otherwise, we perform step (1) again with $m$ changed into $m+1$, where the simple random walk $X^{m+1}$ starts from the next vertex in the ordering which hasn't been sampled before and stops when hitting the boundary $\partial $ or the branches $Y^1 \cup \cdots \cup Y^m$.
	\end{enumerate}
\end{definition}

\begin{figure}
	\centering
	\includegraphics[width=0.48\linewidth]{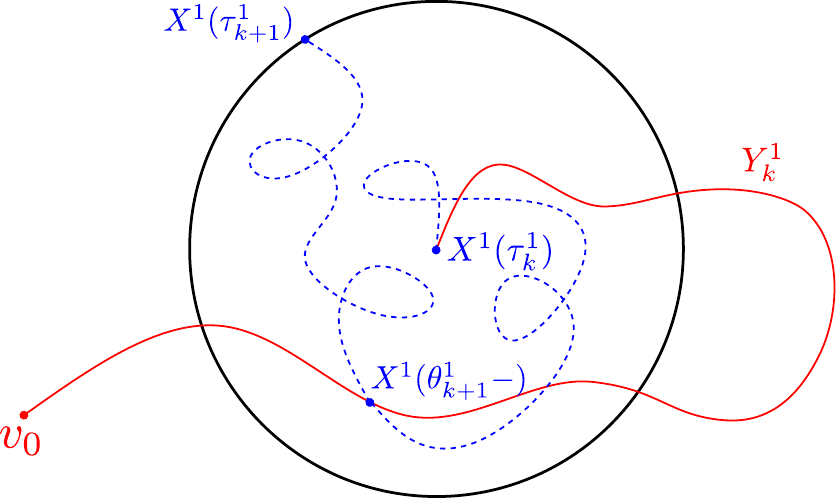}
	\hfill
	\includegraphics[width=0.48\linewidth]{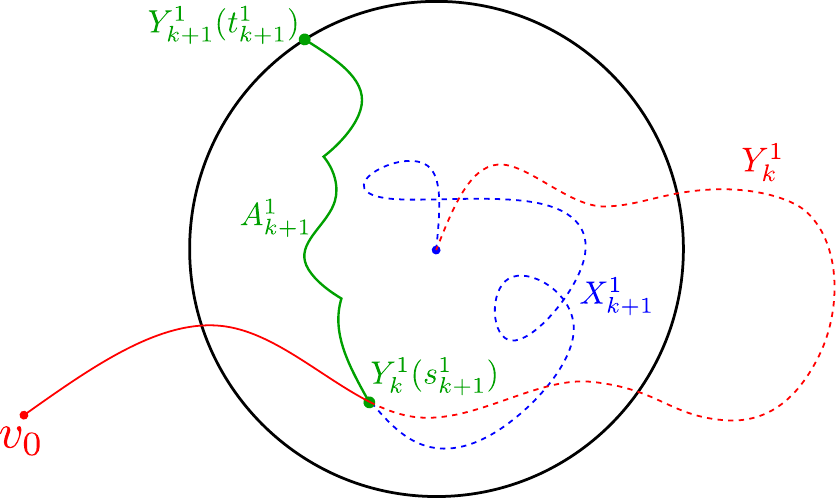}
	\caption{An illustration of the $(k+1)$-th step in the sampling of the first branch of the greedy Wilson's algorithm. Left: The red curve $Y^1_k$ is the loop-erased curve obtained from the first $k$ steps. We run a random walk from $X^1(\tau_k^1)=Y^1_k(t_k)$ until it exits $B(X^1(\tau_k^1), r(\epsilon))$ at the point $X^1(\tau_{k+1}^1)$, shown as the blue curve. The last visit point of the random walk to $Y^1_k$ is $Y^1_k(s^1_{k+1})=X^1(\theta^1_{k+1}-)$. Right:  We perform a loop erasure on the random walk path $X^1[\theta^1_{k+1},\tau_{k+1}^1]$ to obtain the simple curve $A^1_{k+1}$(green curve). The updated simple curve $Y_{k+1}^1$ is obtained by concatenating $Y^1_k[0,s^1_{k+1}]$ (solid red line) and  $A^1_{k+1}$ (solid green line). The dashed blue curve represents $X^1_{k+1}$. }
\end{figure}

\begin{remark}
	Let us stress that when sampling each branch in the algorithm described above, the first step uses a ball of radius $\epsilon$, while the subsequent steps use balls of radius $r(\epsilon)$. This choice ensures that, with high probability, after the first step, the random walk does not return too close to its starting point, and that the connected component of $B(X(\tau_k), r(\epsilon))\backslash (Y_k[0,s_{k+1}]\cup \partial)$ is simply connected, since this open set excludes the starting point. The simply connectivity is crucial in the continuum setting to define the chordal $\SLE_2$. 
	We also note that in \cite{berestycki2024dimersriemannsurfacesii}, there is a mistake in the argument claiming that $D_k$ is simply connected (specifically in item (iii) of the definition of the continuum version of Wilson's algorithm to generate CRSF). However, this issue can be solved by using the same strategy: setting the initial neighborhood radius to be significantly larger than those of subsequent steps.
\end{remark}

For the $m$-th branch $Y^m$, we count the number of iterations as a random variable  $N^m $ such that $\tau^m _{N^m}=T^m$. Immediately, $N^m$ is a.s. finite. Moreover, we can prove that $N^m$ has an exponential tail.

\begin{lemma}[Exponential tail of the number of iterations]\label{exp tail} 
	For $0<r(\epsilon) \le \epsilon $, there exist $\alpha (\epsilon), \beta(\epsilon)>0$, depending on $\epsilon$ and $r(\epsilon)$, such that, for $\delta$ small enough, for every $m\ge 0$, for every $K>2\diameter(D)/\epsilon$,
	\[ \mathbb{P} (N^{m,\#\delta}\ge K) \le \beta(\epsilon) \exp{(-\alpha (\epsilon) K)}. \]
\end{lemma}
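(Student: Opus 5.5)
Each iteration of the greedy algorithm after the first one runs the walk $X^m$ from the current point $X^m(\tau^m_k)$ until it leaves $B(X^m(\tau^m_k),r(\epsilon))$ or hits $\partial$ (or, for $m\ge 2$, the earlier branches). So $N^m$ is bounded by the number of disjoint $r(\epsilon)$-excursions the walk makes before it is absorbed. We show that from any starting point, within a bounded number $L=L(\epsilon)$ of consecutive iterations, the walk is absorbed with probability at least some $p(\epsilon)>0$, uniformly in $\delta$ small and in the past. Iterating with the strong Markov property then gives $\mathbb{P}(N^m\ge jL+1)\le (1-p)^j$. Hence $\mathbb{P}(N^m\ge K)\le \beta e^{-\alpha K}$ with $\alpha=-\log(1-p)/L$ and a constant $\beta$ absorbing the rounding. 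If the algorithm stops early with ``ERROR'', the bound holds trivially.

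The key step is a uniform absorption estimate, which we get from the uniform crossing estimate (iii). Fix a point $x\in D$. Since $D$ is bounded, we can connect $x$ to $\mathbb{C}\setminus D$ by a horizontal straight line of length at most $\diameter(D)$. Cover this line by a chain of about $C\,\diameter(D)/r(\epsilon)$ translated copies of the rectangles $l\delta R$ and $l\delta R'$ at scale $l\delta\asymp r(\epsilon)/10$. We need $l\ge 1/\delta_0$, which holds once $\delta$ is small. Successive crossings, each of probability at least $\alpha_0$, give the walk a probability of at least $\alpha_0^{M}$, with $M=M(\epsilon)$, of following the chain and exiting $D$. To exit $D$ the walk must cross $\partial\Gamma^{\#\delta}_D$, so it is absorbed. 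The walk may also be absorbed earlier, at $\partial$ or at previous branches; that only helps, since absorption is exactly what we want. Along this tube the walk travels a distance of at most $\diameter(D)$ plus $O(r)$, and each completed iteration requires a displacement of at least $r(\epsilon)$. So the number of iterations used is at most $L:=\lceil 2\diameter(D)/r(\epsilon)\rceil+C$. We take $p=\alpha_0^{M}$.

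The first step uses radius $\epsilon$ instead of $r(\epsilon)$. This is harmless: one must only check that it contributes at most one iteration, and the condition $K>2\diameter(D)/\epsilon$ in the statement presumably guarantees that the counting constants match. We apply the strong Markov property at the stopping times $\tau^m_{jL}$. Conditionally on $\mathcal{F}_{\tau^m_{jL}}$, the walk starts afresh at $X^m(\tau^m_{jL})$, and the absorption estimate is uniform over the starting point and over the already-sampled set. The already-sampled set only enlarges the absorbing set, which is monotone in our favour.

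We expect the main obstacle to be making the crossing-chain argument rigorous and uniform. There are two delicate points. First, the walk must start inside the small ball $l\delta B_1+z$ of each rectangle, so we need to position the chain adaptively: start the chain at the walk's current location and recentre each rectangle at the hitting point in the target ball. Second, the number of iterations consumed along the tube must be bounded, which follows from the displacement argument above. Everything else is a routine geometric-trials argument.
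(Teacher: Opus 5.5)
Your proof is correct in outline, with one step that needs a different justification, and it uses a different key estimate from the paper's.

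Both arguments are geometric trials. One shows that from any position, uniformly in the past and in small $\delta$, a block of $O(\diameter(D)/r(\epsilon))$ consecutive iterations ends in absorption with probability at least $e^{-c\,\diameter(D)/r(\epsilon)}$. One then iterates with the strong Markov property at block boundaries.

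The paper gets the block estimate from the invariance principle. The exit point of Brownian motion from $B(0,r(\epsilon))$ is uniform on the circle, so with probability $1/6$ its real part exceeds $\frac{\sqrt{3}}{2}r(\epsilon)$. Hence each single iteration moves the walk by at least $r(\epsilon)/2$ in a fixed horizontal direction with probability at least $1/6$, and $2\diameter(D)/r(\epsilon)$ such iterations in a row force exit from $D$. There each elementary trial is exactly one iteration, so no counting is needed.

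You instead chain rectangle crossings from assumption (iii) at scale $l\delta\approx r(\epsilon)/10$, which gives $p=\alpha_0^M$. You must then bound the number of iterations consumed along the chain. Your stated reason is that the walk travels at most $\diameter(D)+O(r)$ and each iteration needs displacement $r(\epsilon)$. That does not work as written: confinement to a long tube does not bound the number of $r(\epsilon)$-displacements, since the walk could run back and forth along it.

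The correct reason is the ordered confinement you already built in. Between consecutive target hits, the walk stays inside the current rectangle, whose diameter $\sqrt{10}\,l\delta$ is smaller than $r(\epsilon)$. So no two consecutive iteration endpoints fall in the same crossing interval, and at most $M+1$ iterations are completed before absorption. Take $L=M+2$ instead of your explicit value.

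What your route buys is uniformity for free. Assumption (iii) holds by hypothesis uniformly in the location $z$ and the starting vertex, and it gives the explicit threshold $\delta\le\delta_0 r(\epsilon)/10$. The paper's step ``for each vertex $v$'' uses the invariance principle uniformly over starting points, which is more than assumption (i), stated for the walk started near $0$, literally provides.

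Finally, the restriction $K>2\diameter(D)/\epsilon$ plays no role in your argument. Your geometric bound gives $\beta(\epsilon)e^{-\alpha(\epsilon)K}$ for every $K\ge 1$ once $\beta(\epsilon)$ absorbs the rounding.
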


\begin{proof}
	Let $(B_t)_{t\ge 0}$ be the standard 2D Brownian motion. Let $\tau_{r(\epsilon)}$ be the hitting time of $\partial B(0,r(\epsilon) )$. 
	Then $ \mathbb{P}^0 ( \Re( B_{\tau _{r(\epsilon)} }) > \frac{\sqrt{3}}{2}r(\epsilon) ) =\frac{1}{6}$. 
	By the invariance principle and the Markov property, for $\epsilon>0$, for $\delta$ small enough, for each vertex $v^{\#\delta} $, let $\tau _{r(\epsilon)}^{v}$ be the time that the random walk starting from $v^{\#\delta }$ leaves $B^{\#\delta}(v^{\#\delta},r(\epsilon))$, then
	\[\mathbb{P}^v( \Re \{v^{\#\delta} -X^{\#\delta} (\tau^v_{r(\epsilon)}) \} >\frac{r(\epsilon)}{2}   ) \ge \frac{1}{6}. \]
	Then the probability for the random walk to exit $D$ within $2\diameter (D) /r(\epsilon)$ iterations is at least $6^{- 2\diameter (D) /r(\epsilon) }$. 
	Take $\beta(\epsilon)=(1-6^{-2\diameter(D)/r(\epsilon)})^{-1} $, take $\alpha (\epsilon ) =\frac{-r(\epsilon)}{2\diameter(D)} \ln (1-6^{-2\diameter(D)/r(\epsilon)})$. For $K > 2\diameter (D) /r(\epsilon)$, we have
	\begin{align*}
		\mathbb{P} ( N \ge K) \le \left(1-6^{-2\diameter(D)/r(\epsilon)}\right)^{\frac{r(\epsilon) K}{2\diameter(D)}-1}=\beta(\epsilon) \exp(-\alpha(\epsilon)K).  
	\end{align*}
	We apply the above estimate to every branch.
\end{proof}

We actually want to see, for every branch, $k\mapsto \LE(X^s [0, \tau^s_k])=Y^s_k$ as a Markov chain, and in the next subsection, we will provide a continuum version of this chain.  To approximate the decomposition Equation \eqref{LERW}, we define the ``large'' loops erased in this algorithm. For simplicity, we will consider only one branch and ignore the superscripts. We define the set of integers:
\[ R= \left\{ s\in \{ 0,\cdots ,S\} \mid \exists n\in \{0,\cdots ,N\}, \text{such that} \ s=s_n,\  Y[0,s]=Y_n [0,s_n] \right\}.\]
Notice that for $s\in \{ 0,\cdots ,S\}$, if $\diameter (l_s) \ge 2\epsilon$, after sampling $Y[0,s-1]$ in this algorithm, $l_s$ intersects with at least two balls with radius $r(\epsilon)$, so $Y(s)$ has to be touched twice in the greedy Wilson's algorithm, i.e. $s\in R$.  We list the elements in increasing order, $R=\{ \xi_1,\cdots , \xi _r\}$.  Based on this observation, for a given branch of the greedy Wilson's algorithm, we define the erased loops, $(\tilde{l_s} : s\in \{0,\cdots , S-1\})$, as follows. 
\begin{enumerate}
	\item In the event that no ERROR occurs in this branch, we define the erased loops as follows.
	\begin{itemize}
		\item For $s\notin R$, we denote $\tilde{l_s}=Y(s)$.
		\item For $s=\xi_1$, we define the starting point and ending point:
		\[ n_{\xi _1}^+ =\max \{ n\in \{ 0,\cdots , N\}: \xi_1=s_n\} ; \]
		\[n^- _{\xi_1} = \min \{ n\in \{ 0,\cdots , N\}: Y[0,\xi_1] \subset Y_n\}.  \]
		Then we define 
		\[ \tilde{l}_{\xi_1} := (Y_{n^-_{\xi_1}} \backslash Y[0,\xi_1)) \oplus \left(\bigoplus_{n=n^-_{\xi_1}+1}^{n^+_{\xi_1}-1}(X_n \oplus A_n)\right) \oplus X_{n^+_{\xi_1}}. \]
		\item Inductively, suppose we have defined $n_{\xi _k}^+$, $n_{\xi _k}^-$ and $\tilde{l}_{\xi_k}$.
		\begin{itemize}
			\item If $\{ n\in \{ n^+_{\xi_k} ,\cdots, N\} : \xi_{k+1} =s_n\}=\emptyset$, let $\tilde{l}_{\xi_{k+1}} = Y(\xi _{k+1})$ and $$n^+_{\xi_{k+1}}=n^-_{\xi_{k+1}}=\min \{ n\ge n_{\xi _k}^+ : Y[0,\xi_{k+1}] \subset Y_n\}.$$
			\item Otherwise, we define 
			\[ n^+_{\xi_{k+1}} = \max \{ n\in \{ n^+_{\xi_k} ,\cdots, N\} : \xi_{k+1} =s_n\}, \]
			\[ n^-_{\xi_{k+1}} =\min \{ n\in \{ n^+_{\xi_k},\cdots , N\}: Y[0,\xi_{k+1}] \subset Y_n\}, \]
			and the loop
			\begin{equation}\label{greedy loop}
				\tilde{l}_{\xi_{k+1}} := (Y_{n^-_{\xi_{k+1}}} \backslash Y[0,\xi_{k+1})) \oplus \left(\bigoplus_{n=n^-_{\xi_{k+1}}+1}^{n^+_{\xi_{k+1}}-1}(X_n \oplus A_n)\right) \oplus X_{n^+_{\xi_{k+1}}}. 
			\end{equation}
		\end{itemize}
	\end{itemize}
	\item In the event that the ERROR occurs in this branch, define $(\tilde{l_s} : s\in \{0,\cdots , S-1\})$ as trivial loops.
\end{enumerate}
Let $\kappa$ denote the index of the first branch in which ERROR occurs. We then define \textbf{the set of loops erased in the first $m$ branches of the greedy Wilson's algorithm} as the collection of loops erased in the first $\min\{\kappa ,m\}$ branches, denoted by $\mathbb{L}_{\epsilon,m} ^{\#\delta}$.

Immediately, we can prove the following lemma. In particular, the third point shows that the loops erased in GWA are naturally coupled with those in Wilson's algorithm, with a distance of order $\epsilon$.
\begin{lemma}
	Given a branch $Y[0,S]=\LE(X[0,T])$, for $s\in \{0,\cdots , S-1\}$,  we denote the last visit time of $Y(s)$ by $\lv(s) :=\sup \{t\in [0,T] \mid X(t) =Y(s)\}$. Conditioned on the event that no ERROR occurs, we have the following:
	\begin{enumerate}[label=(\roman*)]
		\item For $s\notin R$, assume that $s\in [\xi_k,\xi_{k+1}]$ for an integer $k$ with the convention that $\xi_0 =0$, and let $n_s=\inf \{ n\in \{ n^+_{\xi_k} , \cdots , n^-_{\xi_{k+1}}\} \mid Y[0,s] \subset Y_n\}$. Then
		\[ \tau_{n_s-1}  \le \lv(s-1) \le \lv(s) \le \tau_{n_s} .\]
		\item For $s\in R$, we have
		\[ \tau_{n^+_s -1} \le \lv(s)\le \tau_{n^+_s},\]
		\[ \tau_{n^-_s-1} \le \lv (s-1)\le \tau _{n^-_s}. \]
		\item For $s\in \{0,\cdots, S-1\}$, $d (l_s ,\tilde{l}_s) \le 2\epsilon$.
	\end{enumerate}
\end{lemma}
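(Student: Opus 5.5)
The plan is to compare, for each step $n$ of the greedy algorithm, the partial loop-erasure $Y_n=\LE(X[0,\tau_n])$ with the final loop-erasure $Y=\LE(X[0,T])$. The basic fact we will use repeatedly is monotonicity of chronological loop-erasure. If $Y[0,s]\subset Y_n$ as an initial segment, and $Y[0,s]$ is also an initial segment of every later $Y_{n'}$, then the last visit time $\lv(s)$ of $X[0,T]$ to $Y(s)$ coincides with the last visit time of $X[0,\tau_n]$ to $Y(s)$. This holds because after that time the walk never returns to $Y(s)$. Otherwise a later loop would chop $Y(s)$ off, contradicting that $Y(s)$ survives in $Y$. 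Combined with the update rule
\[
Y_{k+1}=Y_k[0,s_{k+1}]\oplus A_{k+1},
\]
this says the following. A vertex $Y(s)$ first appears in $Y_n$ for the first $n$ with $Y[0,s]\subset Y_n$, which is exactly when its last visit occurs in $[\tau_{n-1},\tau_n]$. It can subsequently be ``touched'' (become $Y_k(s_{k+1})$) only at steps indexed by $s\in R$.

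For (i), take $s\notin R$, so $Y(s)$ is never the cut point $Y_k(s_k)$ after its creation. Consequently $Y[0,s]$ is first contained in $Y_{n_s}$, lies inside $A_{n_s}$ (or the initial segment), and is never revisited. Its last visit is therefore during $[\tau_{n_s-1},\tau_{n_s}]$. The same holds for $Y(s-1)$: either it lies in $A_{n_s}$ too, or it is the cut point $Y_{n_s-1}(s_{n_s})$, whose last visit is $\theta_{n_s}\ge\tau_{n_s-1}$. Monotonicity $\lv(s-1)\le\lv(s)$ is automatic for a loop-erasure.

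For (ii), $s\in R$. By the definition of $n^+_s$ as the last step in which $Y(s)$ is the cut point, the last visit satisfies $\lv(s)=\theta_{n^+_s}\in[\tau_{n^+_s-1},\tau_{n^+_s}]$, since after that step $Y[0,s]$ is never cut again. By the definition of $n^-_s$, $Y(s)$ (hence $Y(s-1)$ as its predecessor on the curve) is created at step $n^-_s$. The argument from (i) then places $\lv(s-1)$ in $[\tau_{n^-_s-1},\tau_{n^-_s}]$.

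For (iii), we use (i) and (ii) to compare $l_s=X[\lv(s-1),\lv(s))$ with $\tilde l_s$. If $s\notin R$, $l_s$ is contained in $X[\tau_{n_s-1},\tau_{n_s}]$, which lies in a ball of radius $r(\epsilon)<\epsilon$ (or $\epsilon$ at the first step, where we need the initial ball). Hence $l_s$ is within distance $\diameter(l_s)/2\cdot 2\le 2\epsilon$ of the point loop $\tilde l_s=Y(s)$, using $d(\gamma,\{x\})\le\sup|\gamma-x|$. If $s\in R$, write $X[\lv(s-1),\lv(s))$ as a concatenation of three pieces.
\begin{enumerate}
\item The tail of step $n^-_s$, from $\lv(s-1)$ to $\tau_{n^-_s}$.
\item The full steps $n^-_s+1,\dots,n^+_s-1$, each equal to $X_n\oplus(\text{walk on }[\theta_n,\tau_n])$.
\item The head $X_{n^+_s}$.
\end{enumerate}
In $\tilde l_s$, each walk segment $X[\theta_n,\tau_n]$ is replaced by its erasure $A_n$, and the tail of step $n^-_s$ is replaced by $Y_{n^-_s}\setminus Y[0,s)$. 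Each replaced pair has the same endpoints and lies in a common ball of radius $r(\epsilon)$. We can therefore reparametrize piecewise, matching pieces with matching endpoints, and obtain a uniform distance at most $2r(\epsilon)\le2\epsilon$.

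The main obstacle is the bookkeeping in (ii)/(iii). We must check that the endpoints really match, i.e.\ that the start of $\tilde l_s$ is $Y(s)$ at time $\lv(s-1)$ viewed as an unrooted loop. We must also check that $Y_{n^-_s}\setminus Y[0,s)$ is the loop-erasure of $X[\lv(s-1),\tau_{n^-_s}]$ up to endpoint conventions. The distance bound itself is the easy part once the pieces are aligned.
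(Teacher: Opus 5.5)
Your architecture matches the paper's. You locate $\lv(s-1)$ and $\lv(s)$ in step windows $[\tau_{n-1},\tau_n]$. For (iii) you split $l_s$ into the tail of step $n^-_s$, the full steps $X_n\oplus X[\theta_n,\tau_n)$, and the head $X_{n^+_s}$, and compare piecewise with $\tilde l_s$; that comparison is fine.

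The gap is in how you locate $\lv(s)$. First, your ``basic fact'' is false as stated. Suppose a later step returns to $Y(s)$ without touching $Y[0,s-1]$. Its cut index is exactly $s$, so $Y_{n'}=Y_{n'-1}[0,s]\oplus A_{n'}$ still begins with $Y[0,s]$, yet the last visit to $Y(s)$ has moved: a loop rooted at $Y(s)$ erases only what follows $Y(s)$.

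Second, and more seriously, in (i) you pass from ``$Y(s)$ is never a cut point'' to ``$Y[0,s]$ is never revisited''. You also read $n_s$ as the first $n$ with $Y[0,s]\subset Y_n$. After $Y[0,s]$ has been built, a later step $n'$ can come back with cut index some $\xi_j<s$, possibly walking through $Y(s)$. It then erases $Y(s)$ and rebuilds $Y[\xi_j+1,s]$ inside $A_{n'}$. In that case $Y(s)$ is never a cut point, but $\lv(s)>\tau_{n'-1}$, so under your reading of $n_s$ the bound $\lv(s)\le\tau_{n_s}$ fails. This is exactly why the statement takes the infimum only over $n\ge n^+_{\xi_k}$, and why the $n^\pm$ are defined inductively with $n^+_{\xi_1}\le n^+_{\xi_2}\le\cdots$. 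Your argument uses neither.

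The same objection applies to (ii). Knowing $n^+_s$ is the last step at which $Y(s)$ is the cut point does not give $\lv(s)=\theta_{n^+_s}$. You must also exclude later cuts \emph{below} $s$, which erase and re-create $Y(s)$. Moreover, when the defining set for $n^+_s$ is empty, $\lv(s)$ is not a $\theta$ at all.

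What is missing is a freezing argument tied to these definitions:
\begin{itemize}
\item Let $m>n_s$ be the first step whose walk meets $Y[0,s]$. Steps strictly between $n_s$ and $m$ have cut index $>s$, so $Y_{m-1}[0,s]=Y[0,s]$.
\item Hence $s_m\le s$ and $Y_{m-1}[0,s_m]=Y[0,s_m]$, that is, $s_m\in R$.
\item Since $s\notin R$ and $s<\xi_{k+1}$, we get $s_m=\xi_j$ for some $j\le k$.
\item The inductive definition then gives $n^+_{\xi_j}\ge m>n_s\ge n^+_{\xi_k}\ge n^+_{\xi_j}$, a contradiction.
\end{itemize}
So the walk avoids $Y[0,s]$ after $\tau_{n_s}$, which is $\lv(s)\le\tau_{n_s}$. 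Running the same argument from step $n^+_s$ gives $\lv(s)=\theta_{n^+_s}$ in (ii), in the nonempty case. This lemma is what the paper's terse ``by definition'' silently relies on. With it in place, your treatment of $\lv(s-1)$ and your distance bound in (iii) go through.
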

\begin{proof} This mainly follows from the definition.
	\begin{enumerate}[label=(\roman*)]
		\item By definition, the index of the vertex in $Y_{n_s}$ reached by the random walk after time $\tau_{n_s}$ is greater than $s$, i.e. $\lv(s) \le \tau_{n_s}$. Also, since $Y(s) \in A_{n_s} $ and $s\notin R$, $Y(s)$ is not the starting point of $A_{n_s}$, then $Y(s-1) \in B(Y_{n_s-1} (\tau_{n_s-1}),\epsilon) $, i.e. $\lv(s-1)\ge \tau_{n_s-1}$.
		\item The first inequality follows from the definition. The second inequality follows  from $n^-_{s}=n^+_{s-1}$. 
		\item This follows from the two points mentioned above. If $s\notin \mathcal{R}$, the loop $\tilde{l}_s=Y(s)$ is constant. It follows from (i) that the loop $l_s=X[\lv(s-1),\lv(s))$ is contained in the ball $B(Y_{n_s-1} (\tau_{n_s-1}),\epsilon)$. Then we have $d (l_s ,\tilde{l}_s) \le 2\epsilon$. 
		Conversely, if $s\in \mathcal{R}$, we decompose the loop $l_s$ into several segments as follows:
		\begin{align*}
			l_s=X[\lv(s-1),\lv(s)) = X[\lv (s-1), \tau_{n_s^-}) \oplus 
			\left(\bigoplus_{n=n^-_s+1}^{n^+_s-1}(X_n \oplus X[\theta_n,\tau_n))\right) \oplus X_{n^+_s}.
		\end{align*}
		In comparison with the concatenation definition of $\tilde{l}_s$ in Equation \eqref{greedy loop}, together with (2), we conclude that $d (l_s ,\tilde{l}_s) \le 2\epsilon$. 
	\end{enumerate} 
\end{proof}

\subsection{Continuum version of greedy Wilson's algorithm}\label{continuum GWA}
Since the random walk path converges to Brownian motion and the LERW converges to chordal/racial $\SLE_2$,  we can define the \textbf{continuum version of greedy Wilson's algorithm for one branch}: Given a simply connected domain $D$, its boundary $\partial$ and a starting point $z_0\in D$.
\begin{enumerate}
	\item We start a Brownian motion from $z_0$ until it intersects $\partial \cup \partial B(z_0, \epsilon)$, and denote the stopping point by $z_1$. Let $\mathcal{Y}_1$ be a radial $\SLE_2$ in $B(z_0,\epsilon)$ from $z_0$ to $ z_1$. Let $\mathcal{X}_1=\{z_0\}$ and $\mathcal{A}_1 =\mathcal{Y}_1$.
	\item Inductively, assume that we have defined the continuum curves up to step $k$ and call $\mathcal{Y}_k$. Let $z_k$ be the endpoint of $\mathcal{Y}_k$. If $z_k \in \partial$, we stop the algorithm and output $\mathcal{Y}_k$. If $z_k\in \overline{B(z_0,r(\epsilon))}$, we stop the algorithm and output ``ERROR''.\\
	Otherwise, we do the following.
	We start a standard Brownian motion independent of everything else from $z_k$ until it intersects $\partial  \cup \partial B(z_k,r(\epsilon))$. Denote the point where we stop the Brownian motion by $z_{k+1}$. Let $V_{k+1}$ be the infimum of the set of points in $\mathcal{Y}_k$ which the Brownian motion intersects. We denote by $\mathcal{X}_{k+1}$ the portion of Brownian motion from $z_k$ up to its last visit to $V_{k+1}$. 
	Denote the portion of $\mathcal{Y}_k$ from $z_0$ to $V_{k+1}$ by $\mathcal{Z}_k$. Now we define a chordal $\SLE_2$ curve $\mathcal{A}_{k+1}$ in the continuous component containing  $V_{k+1}$ of $B(z_k ,r(\epsilon)) \backslash \mathcal{Z} _k $ from $V_{k+1}$ to $z_{k+1}$. Define
	\[ \mathcal{Y}_{k+1} :=\mathcal{Z} _k \oplus \mathcal{A}_{k+1}. \]
\end{enumerate}

To sample finitely many branches, from points $x_1,x_2,\cdots$, we iterate this algorithm by updating $\partial$ to include all the branches that we have sampled and choose the new starting point.

With the strong Markov property, we can glue these Brownian motions to form a standard Brownian motion, so the number of iterations of this induction, denoted by $\mathcal{N}$, is a.s. finite. By the same proof as Lemma \ref{exp tail}, $\mathcal{N}$ has an exponential tail with the same parameters.  Let $\mathcal{Y}_n=\mathcal{Y}_{\mathcal{N}}$ for $n>\mathcal{N}$. Then we can define a limiting path $\mathcal{Y}_{\infty} := \lim_{k\to \infty} \mathcal{Y}_k =\mathcal{Y}_{\mathcal{N}}$, which is the branch that we sample. 

\begin{theorem}\label{scaling limit of XAY}
	Fix $\epsilon >0$. Let $\partial ^{\#\delta}$ be a set of edges in $\Gamma^{\#\delta}$ and assume that $\partial ^{\#\delta}$ converges in the Hausdorff sense to some set $\partial \subset \overline{D}$. Assume $\partial D\subset \partial$. Assume $D\backslash \partial$ is simply connected and $\mathbb{C}\backslash (D\backslash \partial)$ is locally connected. Assume that $v_0\dis \in \Gamma\dis$ converges to $z_0\in D$. Then, for any $k\ge 1$, we have convergence in law, in the sense of the product topology on $(\mathcal{C}(\bar{D}),d_{curve})^{3k}$:
	\begin{equation*}
		(X^{\#\delta}_n, A^{\#\delta}_n, Y^{\#\delta}_n)_{1\le n\le k}  \xrightarrow[\delta \to 0]{(d)} ( \mathcal{X}_n, \mathcal{A}_n, \mathcal{Y}_n )_{1\le n\le k}.
	\end{equation*}
	Furthermore, in the event that an ``ERROR'' occurs, we adopt the convention that the sequence of triples remains constant after the step at which the ``ERROR'' occurs.
\end{theorem}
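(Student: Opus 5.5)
We argue by induction on $k$, using a coupling (Skorokhod representation) at each step. The law of the triple at step $k+1$ is built from the state $\mathcal{Y}_k$ at step $k$ by a Markov transition, and similarly in the discrete setting. So it suffices to show the following: if $Y_k^{\#\delta}\to\mathcal{Y}_k$ almost surely, with the endpoint $z_k^{\#\delta}\to z_k$, then the conditional law of $(X^{\#\delta}_{k+1},A^{\#\delta}_{k+1},Y^{\#\delta}_{k+1})$ given the past converges to the continuum transition kernel evaluated at $\mathcal{Y}_k$. The base case $k=1$ is the same argument with $\mathcal{Z}_0=\{z_0\}$. There $A_1=Y_1=\LE(X[0,\tau_1])$ is a LERW from $v_0$ stopped on exiting $B(v_0,\epsilon)$ or hitting $\partial$. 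Its convergence to radial $\SLE_2$ follows from the radial result of Yadin--Yehudayoff under the invariance principle (i). The exit point converges by (i) directly.

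For the inductive step, fix the past, which converges almost surely. The random walk $X[\tau_k,\tau_{k+1}]$ from $z_k^{\#\delta}$ stopped on leaving $B(z_k^{\#\delta},r(\epsilon))$ or hitting $\partial$ converges to Brownian motion by (i) and the Markov property. I then need joint convergence of three things: the first point $V^{\#\delta}_{k+1}$ of $Y_k$ hit (in the parameter order of $Y_k$), the last visit time $\theta_{k+1}$ to it, and the exit point. This is where the uniform crossing estimate (iii) enters, through the Beurling-type Lemma \ref{beurling estimate}. It gives that once Brownian motion comes within distance $\eta$ of a curve it hits it, and likewise for the walk uniformly in $\delta$, since the walk near $Y_k$ hits it with high probability before moving macroscopically. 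Consequently the map (path, curve) $\mapsto$ (minimal hit index, last visit) is continuous at almost every continuum configuration. Two facts are needed for this: Brownian motion a.s. does not hit $\mathcal{Y}_k$ tangentially without crossing, and the minimal hit parameter is a.s. attained at a unique time. Both are standard for Brownian motion against a fixed curve.

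Given these, conditionally on $(X_{k+1},\mathcal{Z}_k,z_{k+1})$, the discrete curve $A_{k+1}=\LE(X[\theta_{k+1},\tau_{k+1}])$ is exactly a LERW in the discrete domain $B(z_k,r(\epsilon))\setminus(Y_k[0,s_{k+1}]\cup\partial)$. It runs from the boundary point $V_{k+1}$ to the exit point $z_{k+1}$, conditioned to exit there, because the walk after its last visit to $Z_k$ is a walk conditioned not to return to $Z_k$. I would justify this by the standard last-exit decomposition. By the remark after the algorithm, the relevant component excludes $z_0$ when no ERROR occurs and is therefore simply connected. Its complement is locally connected since $\mathcal{Z}_k$ is a curve together with the circle and $\partial$. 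Carathéodory convergence of the discrete domains follows from Hausdorff convergence of $Y_k[0,s_{k+1}]$ and local connectedness. Theorem \ref{chordal} then gives convergence of $A_{k+1}$ to chordal $\SLE_2$ from $V_{k+1}$ to $z_{k+1}$. The endpoints are distinct in the limit a.s., since the exit point lies on the circle away from $\mathcal{Z}_k$ with probability one. Finally $Y_{k+1}=Y_k[0,s_{k+1}]\oplus A_{k+1}$ converges by continuity of concatenation and of truncation at $V_{k+1}$, with $d_{curve}$ controlled at the truncation point. Error events converge because the event $\{z_k\in\overline{B(z_0,r(\epsilon))}\}$ has a null boundary in the limit, and $\{z_k\in\partial\}$ behaves the same way. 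This is consistent with the stated constant-continuation convention.

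The main obstacle is the continuity of the last-visit decomposition, namely the joint convergence of $(V_{k+1},\theta_{k+1},X_{k+1})$. Discrete walks could come within $o(1)$ of $Y_k$ at an earlier parameter without touching it, or could touch it at points absent from the limit. I would first try to rule this out using the Beurling estimate to upgrade near-hits into hits with high probability on both sides. Uniqueness of the minimizing parameter for Brownian motion would then ensure the discrete minimizer converges to it.
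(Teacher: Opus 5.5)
Your proposal follows the paper's proof essentially step by step: induction with a Skorokhod coupling, the invariance principle and Beurling estimates for the exit point, a.s. continuity of the first-hit/last-visit map, the last-exit identification of $A_{k+1}$ as a walk from $V_{k+1}$ conditioned to exit the slit ball at $z_{k+1}$, and Carath\'eodory kernel convergence followed by Theorem \ref{chordal}. The differences are minor. You justify continuity of the last-visit map by upgrading near-hits to hits with a Beurling bound, whereas the paper uses the dense set of topological crossing points given by local infinite winding; for this the quantitative Lemma \ref{beurling} is the right tool, since Lemma \ref{beurling estimate} is stated only for $\partial D$. You should also carry ``$\mathcal{Y}_k$ is a.s.\ simple'' in your induction hypothesis, as the paper does; it follows because $\mathcal{A}_{k+1}$ is a chordal $\SLE_2$ that avoids the boundary. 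Simplicity is what yields the uniform local connectedness of the discrete complements required by Theorem \ref{chordal}, and it is what lets you localize the discrete hit parameter near $V_{k+1}$.
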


\begin{proof}
	The proof closely follows that of Theorem~3.10 in \cite{berestycki2024dimersriemannsurfacesii}. The difference is that we keep track of $\mathcal{X}_i$ and $\mathcal{A}_i$ here, and we account for the ``ERROR'' event. For completeness, we give a self-contained proof.
	
	We prove this theorem by induction and prove at the same time that $\mathcal{Y}_k$ is a simple curve a.s. and the convergence in law of $X^{\#\delta}(\tau_k)$ to $z_k$ at each step k. We use the notations in the description of the greedy Wilson's algorithm and its continuum version. In the first step, by definition, $X_1^{\#\delta}$ and $\mathcal{X}_1$ are just singleton sets. By the invariance principle, $X^{\#\delta}_1(\tau_k)$ converges in law to $z_1$. The convergence of $Y^{\#\delta} _1=A_1^{\#\delta}$ to $\mathcal{Y}_1=\mathcal{A}_1$ is just an application of the result of Yadin and Yehudayoff ~\cite{Yadin_2011}. Since $\mathcal{Y}_1$ is a $\SLE_2$ type curve, it is in particular a.s. a simple curve.
	
	Assume that we have proved the result for $k\in \mathbb{N}$. Since the indicator of the event $\{ |X^{\#\delta}(\tau_k)-X^{\#\delta}(0)|\le r(\epsilon)\}$ converges in law to that of $\{ |z_k-z_0|\le r(\epsilon)\}$, the indicator of the event ``ERROR'' occurs within the first $k$ steps converges to that in the continuum. The result is trivial if an ``ERROR'' has already occurred; thus, we assume that no ``ERROR'' occurs up to step $k$.
	
	We consider $B(X^{\#\delta}(\tau^{\#\delta}_k),r(\epsilon))$ and $B(z_k,r(\epsilon))$. By the invariance principle, we couple the random walk $X^{\#\delta}(t)$ and a reparametrized Brownian motion $B_{\phi(t)}$ such that they are uniformly close in compact time intervals. On the one hand, let $\phi(\tau)$ be the time at which the Brownian motion hits $\partial \cup \partial B(z_k,r(\epsilon) )$, by the Beurling estimate Lemma \ref{beurling estimate}, for $\delta$ small enough, the random walk will, with high probability, intersect with $\partial^{\#\delta} \cup \partial B^{\#\delta}(X^{\#\delta}(\tau^{\#\delta}_k),r(\epsilon))$ after time $\tau$ at a position close to $B_{\phi(\tau)}$. On the other hand, applying the Beurling estimate for the Brownian motion at time $\tau_{k+1}^{\#\delta}$, we can see that the Brownian motion will next hit $\partial \cup \partial B(z_k,r(\epsilon))$ at a nearby point. Then $X^{\#\delta}(\tau^{\#\delta} _{k+1})$ converges in law to $z_{k+1}$ as $\delta \to 0$. 
	
	For the convergence of the last visit point of the random walk $X^{\#\delta}[\tau^{\#\delta}_k, \tau^{\#\delta}_{k+1}]$ to $Y_k^{\#\delta}$, $X^{\#\delta} (\theta^{\#\delta}_{k+1})$, and the random walk path $X^{\#\delta}_{k+1}$, we consider the function $f:\mathcal{C}\times\mathcal{C}\to D\times \mathcal{C}$ which maps two curves $(\eta[a,b],\gamma[b,c])$ to $(x, \gamma')$ where $x$ is the last visit point of $\gamma$ to $\eta$, and $\gamma'$ is the portion of $\gamma$ from $b$ up to its last visit to $x$. In particular, 
	\[ f(Y^{\#\delta} _k, X^{\#\delta}[\tau^{\#\delta}_k, \tau^{\#\delta}_{k+1}])=(X^{\#\delta} (\theta^{\#\delta}_{k+1}),X^{\#\delta}_{k+1});
	f( \mathcal{Y}_k,B^{z_k}[z_k,z_{k+1}])=(V_{k+1},\mathcal{X}_{k+1}). \]
	Notice that the curve $B^{z_k}[z_k,z_{k+1}]$ is independent of $\mathcal{Y}_k$ by the Markov property and acts as a planar Brownian motion. By the local infinite winding property of planar Brownian motion, the closed set $B^{z_k}[z_k,z_{k+1}]\cap \mathcal{Y}_k$ almost surely contains a countable dense subset of topological crossing points. Then the last visit point can be approximated by crossing points in this countable set, which shows that $f$ is a.s. continuous with respect to the first coordinate. Moreover, using the same strategy and the definition of the last visit point, $f$ is a.s. continuous with respect to the second coordinate. Hence $f$ is a.s. continuous, and the pair $(X^{\#\delta} (\theta^{\#\delta}_{k+1}),X^{\#\delta}_{k+1})$ converges in law to $(V_{k+1},\mathcal{X}_{k+1})$. Also, $\LE(X^{\#\delta}[0,\theta^{\#\delta}_{k+1}])$ converges in law to $\mathcal{Z}_{k}$.
	
	Conditioned on $X^{\#\delta}[\tau^{\#\delta}_k, \theta^{\#\delta}_{k+1}]$, $Y_k^{\#\delta}$ and $X\dis(\tau^{\#\delta}_{k+1})$, the law of $X\dis[\theta^{\#\delta}_{k+1},\tau ^{\#\delta}_{k+1}]$ is the same as a simple random walk starting from $X^{\#\delta}(\theta^{\#\delta}_{k+1})$ conditioned to exit $B(X^{\#\delta}(\tau^{\#\delta}_k),r(\epsilon)) \backslash Y^{\#\delta}[0,s_{k+1}]$ at $X^{\#\delta}(\tau^{\#\delta}_{k+1})$. We denote the continuous component of $B(X\dis(\tau^{\#\delta}_k),r(\epsilon))^{\#\delta} \backslash Y^{\#\delta}[0,s_{k+1}]$ containing $X^{\#\delta}(\theta^{\#\delta}_{k+1})$ by $D^{\#\delta}_{k+1}$. For the continuum version, similarly, we denote the continuous component of $B(z_k,\epsilon) \backslash \mathcal{Y}[z_0, V_{k+1}]$ by $D_{k+1}$. So $D^{\#\delta}_{k+1}$ and $D_{k+1}$ are simply connected.  
	
	By the induction hypothesis and Skorokhod's representation theorem, without loss of generality, suppose that the convergence of  $Y_k^{\#\delta }$, $X^{\#\delta} (\tau^{\#\delta}_k)$ and $X^{\#\delta}(\theta^{\#\delta}_{k+1})$ holds almost surely. Hence $\partial D_{k+1}^{\#\delta}$ converges almost surely to $ \partial D_{k+1}$ in the Hausdorff sense. Combined with the simply connectivity of $D_{k+1}\dis$ and $D_{k+1}$, this implies that $D^{\#\delta}_{k+1}$ converges in the sense of kernel convergence(\cite[Section 1.4]{pommerenke2013boundary}). By the Carath\'eodory kernel theorem(\cite[Theorem 1.8]{pommerenke2013boundary}), this implies the convergence of $D^{\#\delta}_{k+1}$ in the Carath\'eodory sense. Since $\mathcal{Z}_{k}$ is simple curve, $\mathbb{C}\backslash D^{\#\delta}_{k+1}$ is uniformly locally connected. Hence, by applying Theorem \ref{chordal}, $A_{k+1}^{\#\delta}$ converges to $\mathcal{A}_{k+1}$. 
	Since $\mathcal{Y}_{k+1}=\mathcal{Z}_k \cup \mathcal{A}_{k+1}$, $Y_{k+1}^{\#\delta} $  converges to $\mathcal{Y}_{k+1}$. Consequently, we obtain the convergence of the joint law.
\end{proof}

Since the number of iterations $\mathcal{N}$ is a.s. finite, the set of intersecting points, i.e. $\mathcal{R}=\{ z \in \mathcal{Y}_{\infty} \mid \exists n\in [1,\mathcal{N}]\  \text{ s.t. } \ z=V_n \}$, is a.s. finite. For a continuous curve $\mathcal{Y}$ and two points $a, b$, we denote the portion of $\mathcal{Y}$ between $a$ and $b$ by $\mathcal{Y}[a,b]$. Now we define the loops erased in the continuum version by induction.
\begin{enumerate}
	\item We take the infimum of $\mathcal{R}$, denoted by $s_1$. Similar to the discrete version, we define the starting point and the ending point:
	\[ n^+_{s_1}=\max \{ n\in \{1,\cdots , \mathcal{N}\} : s_1= V_n \}; \]
	\[ n^-_{s_1}=\min \{ n\in \{1,\cdots , \mathcal{N}\} : s_1\in \mathcal{A}_n \}. \]
	Then we define the loop:
	\[ l^c_{s_1 } := \mathcal{A} _{n^-_{s_1}} [s_1,V_{n^-_{s_1}}]\oplus \left( \bigoplus_{n=n^-_{s_1}+1}^{n^+_{s_1}-1} (\mathcal{X}_n\oplus \mathcal{A}_n) \right) \oplus \mathcal{X}_{n^+_{s_1}}. \]
	\item Inductively, suppose we have defined $s_k$, $n^+_{s_k}$, $n^-_{s_k}$ and $l^c_{s_k}$. If $n^+_{s_k}=\mathcal{N}$, we stop. Otherwise, we take the infimum of $(\mathcal{Y}_{\infty}(s_k, z_{\mathcal{N}}) \backslash \{ s_k\}) \cap \mathcal{R}  $, noted by $s_{k+1}$, then define the starting point and ending point:
	\[ n^+_{s_{k+1}}=\max \{ n\in \{n^+_{s_k}+1,\cdots , \mathcal{N}\} : s_{k+1}= V_n \}; \]
	\[ n^-_{s_{k+1}}=\min \{ n\in \{n^+_{s_k}+1,\cdots , \mathcal{N}\} : s_{k+1}\in \mathcal{A}_n \}. \]
	Then we define the loop:
	\begin{equation}\label{continuum greedy loop}
		l^c_{s_{k+1}} := \mathcal{A} _{n^-_{s_{k+1}}} [s_{k+1},V_{n^-_{s_{k+1}}}]\oplus \left( \bigoplus_{n=n^-_{s_{k+1}}+1}^{n^+_{s_{k+1}}-1} (\mathcal{X}_n\oplus \mathcal{A}_n) \right) \oplus \mathcal{X}_{n^+_{s_{k+1}}}.
	\end{equation}
\end{enumerate}

We run the algorithm for an ordering of starting points $x_1, x_2, \cdots,x_m$.
Let $\varkappa$ denote the index of the first branch in which ERROR occurs. We then define \textbf{the set of loops erased in the first $m$ branches of the continuum version of the greedy Wilson's algorithm} as the collection of the loops erased in the first $\min\{\varkappa ,m\}$ branches, denoted by $\mathbb{L}^c_{\epsilon,m}$. We have the following corollary.

\begin{corollary}\label{k branches}
	Fix $\epsilon >0$. Let $\partial ^{\#\delta}$ be a set of edges in $\Gamma^{\#\delta}$ and assume that $\partial ^{\#\delta}$ converges in the Hausdorff sense to the boundary $\partial D$. Assume that $\mathbb{C}\backslash D$ is locally connected.
	We choose $x_1^{\#\delta},\ x_2^{\#\delta}, \cdots , x_m^{\#\delta} \in \Gamma\dis \cap D$ and $x_1,\cdots, x_m \in D$, such that for every $i$, $x_i^{\# \delta} $ converges to $x_i$ as $\delta \to 0$. Then, in the sense of the loop soup topology, we have the convergence in law:
	\[ \mathbb{L}_{\epsilon, m}^{\#\delta} \xrightarrow[\delta \to 0]{(d)} \mathbb{L}_{\epsilon, m}^c.  \]
\end{corollary}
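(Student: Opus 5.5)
The plan is to deduce the corollary from Theorem~\ref{scaling limit of XAY}, applied branch by branch, together with the exponential tail of Lemma~\ref{exp tail}. We proceed by induction on the number of branches $m$.

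\textbf{Reduction to finitely many iterations.} For the first branch, the erased loops $\tilde l_{\xi_k}$ of \eqref{greedy loop} are finite concatenations of pieces $X_n$, $A_n$ and sub-arcs of $Y_{n^-}$. The continuum loops $l^c_{s_k}$ of \eqref{continuum greedy loop} have the same form in terms of $\mathcal{X}_n,\mathcal{A}_n,\mathcal{Y}_n$. Fix $K$ large. By Lemma~\ref{exp tail}, and its continuum analogue for $\mathcal N$, the events $\{N^{\#\delta}\ge K\}$ and $\{\mathcal N\ge K\}$ have probability at most $\beta(\epsilon)e^{-\alpha(\epsilon)K}$, uniformly in small $\delta$. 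So it suffices to prove convergence in law of the loop collections on $\{N<K\}$.

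\textbf{Convergence on $\{N<K\}$.} Apply Theorem~\ref{scaling limit of XAY} with $k=K$. By Skorokhod's representation theorem, we may assume the triples $(X_n\dis,A_n\dis,Y_n\dis)_{n\le K}$ converge almost surely, together with the endpoints $X\dis(\tau_n)$, the last-visit points $X\dis(\theta_n)\to V_n$, the ERROR indicators, and $\mathbf 1\{N\dis=n\}\to\mathbf 1\{\mathcal N=n\}$. The last two follow from almost sure convergence of the endpoints, since $|z_n-z_0|=r(\epsilon)$ and $z_n\in\partial$ only on exceptional events in the continuum; this uses hitting of the boundary as in the proof of the theorem.

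It remains to show that the combinatorial data converge:
\begin{itemize}
\item which indices $n$ share the same last-visit point, i.e.\ the partition of $\{1,\dots,N\}$ into blocks $[n^-_{s},n^+_{s}]$;
\item the ordering of the points $V_n$ along $\mathcal Y_\infty$;
\item the sub-arcs $\mathcal A_{n^-}[s,V_{n^-}]$.
\end{itemize}
Once this holds, each discrete loop is a fixed concatenation of converging curves, hence converges in $d$. The loop soup distance is then controlled by a matching of finitely many loops. Trivial loops $\tilde l_s=Y(s)$ for $s\notin R$ have zero diameter and cost nothing in $d_{M(D)}$.

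\textbf{Main obstacle: equality of last-visit points.} This step is where I expect the real difficulty. In the discrete setting, $\xi=s_n=s_{n'}$ is an exact equality of lattice points. Convergence of curves only gives $V_n$ close to $V_{n'}$, not that distinct continuum points come from distinct discrete points. I would argue that almost surely the continuum $V_n$ are pairwise distinct, unless equality is forced because a later Brownian piece $\mathcal X_{n'}$ does not reach the portion of $\mathcal Y_{n'-1}$ created after $V_n$. The reason is that Brownian motion hits a fixed simple curve at a point with a diffuse law given the past, by independence and the continuity of $f$ from the theorem's proof.

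Conversely, if $V_n=V_{n'}$ with $n<n'$ in the continuum, the structure is that $\mathcal X_{n'}$ last visits $\mathcal Y_{n'-1}$ exactly at its tip portion started at $V_n$. I would show the discrete analogue holds with probability tending to one, using that the last-visit map is continuous and that $\mathcal A$-curves are simple and distinct. Where this is delicate, it can be absorbed by allowing errors of size $O(\epsilon)$. The neighbouring loops then differ by pieces of diameter at most $2r(\epsilon)$, which is acceptable since the final theorem lets $\epsilon\to0$ afterwards; if needed I would weaken the statement to $\limsup$ distance $\le 2\epsilon$.

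\textbf{Induction on branches.} For $m$ branches, condition on the first $j$ branches. Update $\partial$ to include $Y^1,\dots,Y^j$, which converge in Hausdorff sense to $\partial\cup\mathcal Y^1\cup\dots\cup\mathcal Y^j$. The complement $D\setminus\partial$ is simply connected and its complement is locally connected, since the branches are simple curves ending on the previous boundary. This verifies the hypotheses of Theorem~\ref{scaling limit of XAY} for branch $j+1$ started at $x\dis_{j+1}\to x_{j+1}$. We use that $x_{j+1}\notin\mathcal Y^1\cup\dots$ almost surely, and otherwise the branch is empty in both settings. Convergence of $\kappa$ to $\varkappa$ follows from the ERROR indicators.
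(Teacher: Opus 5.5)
Your overall plan matches the paper's proof. You truncate the number of iterations via Lemma~\ref{exp tail}, apply Theorem~\ref{scaling limit of XAY} to the first $K$ triples, argue that the loop construction is a.s.\ continuous at the limit, and handle later branches by adding earlier ones to $\partial$. The gap is exactly at the step you flag as the main obstacle, and your treatment of it is wrong in two places.

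\emph{No coincidences in the continuum.} There are no ``forced'' coincidences $V_n=V_{n'}$. $V_{n'}$ is the first point, along $\mathcal{Y}_{n'-1}$, of the compact set $\mathcal{Y}_{n'-1}\cap B[z_{n'-1},z_{n'}]$, so it is actually visited by the $n'$-th Brownian piece. That piece is independent of the past and a.s.\ does not visit the previously determined point $V_n$. Hence the $V_n$, $n\le K$, are a.s.\ pairwise distinct, with no exceptional configurations.

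\emph{The fallback fails.} Absorbing discrete coincidences into an $O(\epsilon)$ error does not work. Suppose $s_n=s_{n'}$ in the discrete picture while $V_{n'}$ lies just after $V_n$ on the limit curve. The discrete construction then outputs one loop running over steps $n^-,\dots,n'$, whereas the continuum outputs two loops whose concatenation it roughly is. In $d_{M(D)}$ this costs at least half the diameter of the smaller of the two loops. Since each loop may span many steps, this is not controlled by $r(\epsilon)$ or $\epsilon$. In any case, weakening the conclusion to a $\limsup\le 2\epsilon$ bound would not prove the corollary, which asserts exact convergence in law at fixed $\epsilon$.

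\emph{The missing step.} What you need is that discrete ties are asymptotically negligible, and this follows at once from what you already have. Under your Skorokhod coupling, $X^{\#\delta}(\theta_n)\to V_n$ and $X^{\#\delta}(\theta_{n'})\to V_{n'}$ a.s., and $V_n\neq V_{n'}$ a.s. So a.s.\ the two last-visit vertices differ for all small $\delta$. A tie (two steps $n<n'$ with the same last-visit vertex) forces them to coincide, so dominated convergence gives that the probability of a tie tends to $0$ for each pair $n<n'\le K$. Contrary to your remark, convergence together with a.s.\ distinctness of the limits is precisely what shows that distinct continuum points come from distinct discrete points.

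\emph{Conclusion of the argument.} With ties excluded, the set $R$, the indices $n^\pm$ and the order of the last-visit points along the path are eventually the same in both pictures. The order is preserved because if $\gamma^\delta\to\gamma$ uniformly with $\gamma$ simple and $\gamma^\delta(t^\delta)\to\gamma(t)$, then $t^\delta\to t$. Each loop is then a fixed finite concatenation of converging pieces. This is what the paper's brief appeal to ``the definition of the set of loops'' implicitly relies on.

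\emph{A smaller slip.} $\{z_n\in\partial\}$ is not an exceptional event, since every branch terminates that way. So convergence of $\mathbf{1}\{N^{\#\delta}=n\}$ must come from the Beurling-type boundary argument in the proof of Theorem~\ref{scaling limit of XAY}, not from a null-boundary continuity argument.
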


\begin{proof}
	For any $j\in \mathbb{N}$, let $\mathcal{T}_j^{\#\delta}$ and $\mathcal{T}_j$ be the union of the first $j$ branches generated by Wilson's algorithm. Since $\partial^{\#\delta} \cup \mathcal{T}_j^{\#\delta}$ converges in law in the Hausdorff sense to $D\backslash \mathcal{T}_j$, and $\mathbb{C}\backslash (D\backslash \mathcal{T}_j)$ is locally connected, it suffices to prove the result for the first branch. For $k\in \mathbb{N}^*$, we apply the loop construction to $(X^{\#\delta}_n, A^{\#\delta}_n, Y^{\#\delta}_n)_{1\le n\le k}$ and $( \mathcal{X}_n, \mathcal{A}_n, \mathcal{Y}_n )_{1\le n\le k}$ and denote them by $\mathbb{L}^{\#\delta} (\epsilon, k)$ and $\mathbb{L}^c(\epsilon,k)$ (similarly, if ``ERROR'' occurs in these segments, we set the collection of loops to be trivial). Immediately, by the definition of the numbers of iterations $N\dis$ and $\mathcal{N}$, we have $\mathbb{L}^{\#\delta}_{\epsilon,1} =\mathbb{L}^{\#\delta}(\epsilon, N\dis)$ and $\mathbb{L}^c_{\epsilon,1} =\mathbb{L}^c (\epsilon, \mathcal{N})$.
	
	Let $\mathcal{E}(k)$(resp. $\mathcal{E}(k)\dis$) be the event that ERROR occurs in these $k$ segments of the continuum(resp. discrete) version. 
	Observe that the function $(z_n)_{0\le n\le k} \mapsto \mathbb{1}_{\mathcal{E}(k)}= \mathbb{1}_{\cup_{n=1}^k\{ z_n\in \overline{B(z_0,r(\epsilon))}\}}$ is almost surely continuous, we have the convergence in law:  $\mathbb{1}_{\mathcal{E}(k)\dis} \xrightarrow[\delta \to 0]{(d)} \mathbb{1}_{\mathcal{E}(k)}$.
	In the proof of theorem \ref{scaling limit of XAY}, we have proved that $( \mathcal{X}_n, \mathcal{A}_n, \mathcal{Y}_n )_{1\le n\le k} \mapsto (V_n)_{1\le n\le k}$ is a.s. continuous. Since $\mathcal{Y}_n$ is almost surely a simple curve, the function that decomposes it into two parts at a given point is continuous. Combined with the definition of the set of loops (see Equations \eqref{greedy loop} and \eqref{continuum greedy loop}), the set of erased loops $\mathbb{L}^{\#\delta} (\epsilon,k) $ converges in law to $\mathbb{L}^c(\epsilon,k)$.  Since $\mathcal{N}$ is almost surely finite and has an exponential tail, by using the Lévy-Prokhorov metric associated with $\mathcal{M}(D)$,
	\begin{align*}
		d_{LP} (\mathbb{L}^{\#\delta}_{\epsilon,1},\mathbb{L}^c_{{\epsilon, 1}}) &\le d_{LP} (\mathbb{L}^{\#\delta}_{\epsilon,1}, \mathbb{L}^{\#\delta}(\epsilon,k))+
		d_{LP}(\mathbb{L}^{\#\delta}(\epsilon,k),\mathbb{L}^c(\epsilon, k))+ d_{LP}(\mathbb{L}^c(\epsilon,k), \mathbb{L}^c_{\epsilon,1})\\
		&\le \mathbb{P}(N^{\#\delta}>k)+ d_{LP}(\mathbb{L}^{\#\delta}(\epsilon,k),\mathbb{L}^c(\epsilon, k)) + \mathbb{P}(\mathcal{N}>k)\\
		&\le 2\beta(\epsilon)\exp{(-\alpha(\epsilon ) k)}+d_{LP}(\mathbb{L}^{\#\delta}(\epsilon,k),\mathbb{L}^c(\epsilon, k)) .
	\end{align*}
	We conclude by first taking $k \to +\infty$ and then $\delta\to 0$.
\end{proof}

\section{Scaling limit of random walk loop soup}\label{chapter proof}
In this section, we complete the proof of the main theorem \ref{scaling limit}. Recall from the introduction that our strategy is to compare the loop soup with the set of loops erased in the greedy Wilson's algorithm. At this stage, we have identified the scaling limit for the set of loops erased during the sampling of the first $m$ branches of the greedy Wilson's algorithm, conditioned on no ERROR occurring. To conclude, we need two final elements. First, we need to say that all remaining branches after the $m$ first are unlikely to contribute--this is the goal of Section \ref{section schramm's finiteness lemma}. Second, we need to deal with the extra splitting from the loops erased in Wilson's algorithm, which is shown in Section \ref{Concatenation probability} to be unnecessary in the limit.

\subsection{Schramm's finiteness theorem}\label{section schramm's finiteness lemma}
Schramm's finiteness theorem was first introduced in \cite{schramm1999scalinglimitslooperasedrandom}, and later generalized in \cite{berestycki2018dimersimaginarygeometry} to graphs satisfying bounded density and the uniform crossing estimate. Roughly, after sampling a finite number of branches according to a special ordering of the starting points, with high probability, the random walks emanating from the remaining points and their loop erasure (i.e., all remaining branches) have diameter at most $\epsilon$. Consequently, with high probability, no large loops will be erased after sampling finite branches. The proof closely follows that of Lemma 4.18 of \cite{berestycki2018dimersimaginarygeometry}; the main difference is that Lemma 4.18 in \cite{berestycki2018dimersimaginarygeometry} only samples the tree close to a specific point, while we sample the whole tree, as in \cite{schramm1999scalinglimitslooperasedrandom}. For completeness, we provide a proof in Appendix \ref{appendix B}.

\begin{theorem}[Schramm's finiteness theorem~\cite{berestycki2018dimersimaginarygeometry}]\label{theorem schramm}
	Fix $\epsilon >0$. Let $\delta_0$ be the uniform constant in the uniform crossing assumption. Assume Wilson's algorithm is run according to the Good Ordering defined in Section \ref{section wilson}. Then there exists an integer $j_0=j_0(\epsilon)$ depending only on $\epsilon$, such that for all $j >j_0$ and all $\delta < 6^{-j_0} \delta_0$, the following event holds with probability at least $1-\epsilon$, the random walks emanating from all vertices after sampling $j_0$ branches have diameter at most $\epsilon$.
	
	An immediate corollary is that: if we use the Lévy-Prokhorov metric associated with $\mathcal{M}(D)$, denote by $d_{LP}$, then for all  $\delta < 6^{-j_0} \delta_0$ and all $j>j_0$, 
	\begin{equation*}
		d_{LP} (\mathcal{L}^{\#\delta }, \mathcal{L}^{\#\delta}_{j}) \le \epsilon.
	\end{equation*}
\end{theorem}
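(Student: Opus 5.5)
The plan follows Schramm's original argument, adapted as in Lemma 4.18 of \cite{berestycki2018dimersimaginarygeometry}. The key input is a uniform \emph{hitting estimate} derived from the uniform crossing estimate. There is a constant $c=c(\alpha_0)>0$ such that the following holds for every scale $\rho\ge \delta/\delta_0$ and every vertex $v$. Let $K$ be a connected set of diameter at least $\rho$ that comes within distance $\rho$ of $v$. Then the random walk from $v$ hits $K$ before exiting $B(v,C\rho)$ with probability at least $c$. To prove it, stack a bounded number of horizontal and vertical $3\times1$ rectangle crossings of size comparable to $\rho$ into an annulus around $v$. The walk then makes a closed circuit in the annulus before leaving $B(v,C\rho)$. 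By planarity this circuit must cross $K$, and the crossing must produce an actual vertex hit because edges are small (Lemma~\ref{uniform small edge}). Iterating over $k$ disjoint dyadic annuli gives the bound: the walk travels distance $\epsilon$ without hitting such a $K$ with probability at most $(1-c)^k$.

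Next we use the structure of the Good Ordering. Fix $j_0$ and sample all branches from $Q_0,\dots,Q_{j_0}$; this is a finite number $M(j_0)$ of branches, depending only on $j_0$ and $D$. At each level $j\le j_0$, each cell of $6^{-j}\Z^2$ meeting $D$ contains a starting vertex near its center. The branch from that vertex reaches either $\partial D$ or an earlier branch, so it is a connected set linking the cell center to the existing tree. The induction runs as follows. Suppose that after level $j-1$, every point of $D$ is within distance $6^{-(j-1)}$-ish of a connected piece of the tree of diameter at least comparable to $6^{-(j-1)}$, with the boundary counting via the Beurling estimate, Lemma~\ref{beurling estimate}. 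We must show that the level $j$ branches preserve this property at scale $6^{-j}$, except on a bad event. The bad event is that some level $j$ branch stays short, i.e.\ its LERW hits the tree extremely close to its start. We control it by a union bound over the $O(6^{2j})$ cells, combined with the hitting estimate. This is the main obstacle, and it is the delicate part of Schramm's argument: one needs a bound like $\mathbb{P}(\text{some branch at level } j \text{ fails})\le 6^{2j}(1-c)^{k_j}$ with $k_j$ growing. Since only $j$ up to $j_0$ matter, it may suffice to prove something softer. We only need that, with probability at least $1-\epsilon/2$, every point of $D$ is within $\epsilon/10$ of a tree component of diameter at least $\epsilon/10$ after $j_0$ levels. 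For that, take $j_0$ large so that $6^{-j_0}\ll\epsilon$. Then apply the hitting estimate from each cell center at the many scales between $6^{-j_0}$ and $\epsilon$. The tree already reaches distance $6^{-j}$ of every point at each level, since every cell center of level $j$ lies on the tree. Sampled branches themselves are connected sets joined to $\partial D$, so their diameter is comparable to their distance to $\partial D$ or to earlier branches. Finally, restrict to $\delta<6^{-j_0}\delta_0$ so that the crossing estimate applies at all scales down to $6^{-j_0}$.

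On the good event, any later walk starts at a vertex within $6^{-j_0}$ of the tree, which is a connected set of size at least $\epsilon/10$. By the hitting estimate over $\asymp\log_6(\epsilon 6^{j_0})$ scales, this walk reaches distance $\epsilon/2$ before hitting the tree with probability at most $(1-c)^{\log(\epsilon 6^{j_0})}$. That is polynomially small in $6^{-j_0}$, say $6^{-\beta j_0}\epsilon^{-\beta}$. Since the walk stops upon hitting the tree, it then has diameter at most $\epsilon$. We need a union bound over all subsequent starting points. The naive bound over all vertices, about $\delta^{-2}$ of them, is too costly. Instead, we use the fact that the later walks' starting points at level $j$ are within $6^{-j}$ of the tree. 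The level-$j$ union bound then costs $6^{2j}(1-c)^{\log(\epsilon6^{j})}$ once the exponent is arranged to beat $2$, via $k$ grouped annuli per scale. This bound is summable in $j$, and choosing $j_0$ large makes the tail below $\epsilon/2$. Checking this exponent is where I expect real work.

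The corollary follows from the coupling of Corollary~\ref{loop addtion2}. On the good event, all loops of $\mathcal{L}^{\#\delta}$ not attached to the first $j_0$ branches are subsets of random walks of diameter at most $\epsilon$. These contribute at most $\epsilon/2$ to $d_{\mathcal{M}(D)}$, which gives $d_{LP}\le\epsilon$.
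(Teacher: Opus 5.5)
There is a genuine gap, and it is the step you flagged yourself. Your per-walk bound comes from iterating the hitting estimate over annuli centred at the starting point. It uses only that some connected piece of the tree with large diameter lies nearby. That can only give polynomial decay, roughly $(6^{-j}/\epsilon)^{\beta}$ for a level-$j$ walk, and no arrangement of annuli makes $\beta>2$. Any bound that uses only ``a connected set of large diameter at distance $r$'' must also cover a slit. For Brownian motion started at distance $r$ from the tip of a slit of length $R$, the escape probability is of order $(r/R)^{1/2}$, so $\beta\le 1/2$. Grouping annuli does not help, because the crossing estimate yields a uniform hitting probability only for annuli of a fixed modulus. Consequently $\sum_j 6^{2j}(6^{-j}/\epsilon)^{\beta}$ diverges, and your union bound over the $\asymp 6^{2j}$ level-$j$ starting points fails. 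The same polynomial decay is why you could not treat the levels $j>j_{\max}=\lfloor\log_6(C\delta_0/\delta)\rfloor$. There the crossing estimate is unavailable at the relevant scales, and one has to count $\asymp\delta^{-2}$ vertices.

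The missing idea, which the paper uses, is to exploit the density of the tree along the whole trajectory, not only near the start. Every vertex of $Q_0\cup\cdots\cup Q_{j-1}$ lies on $T^{\#\delta}_{j-1}$, so $T^{\#\delta}_{j-1}\cup\partial D$ is connected and lies within $O(6^{-j})$ of every point of $D$, deterministically. (Each branch ends on $\partial D$ or on an earlier branch, so there is no ``short branch'' bad event and no induction is needed.) Hence, wherever the walk currently is, Lemma~\ref{beurling} at the \emph{fixed} scale $C_0 6^{-j}$ gives probability at least $1/2$ of hitting the tree before the walk moves another $C_0 6^{-j}$. By the Markov property, the walk from $v\in Q_j$ reaches distance $j^2 6^{-j}$ before hitting $T^{\#\delta}_{j-1}$ with probability at most $(1/2)^{j^2/C_0}$. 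This is superexponentially small in $j$, so it beats the $O(6^{2j})$ count. One then chooses $j'$ so that the tail sum over $j\ge j'$ is at most $\epsilon$ and $j'^2 6^{-j'}\le\epsilon/2$. For $j>j_{\max}$ one freezes the scale at $6^{-j_{\max}}$: each such walk escapes to distance $j_{\max}^2 6^{-j_{\max}}$ with probability at most $(1/2)^{j_{\max}^2/C_0}$. Bounded density gives only $O(\delta^{-2})=O(6^{2j_{\max}})$ such vertices, so the naive count you rejected is harmless here. Your deduction of the corollary from Corollary~\ref{loop addtion2} is fine.
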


The following corollary shows that the mass of the macroscopic loops close to the boundary is arbitrarily small.
\begin{corollary}\label{boundary}
	For all $\epsilon>0$, there exists $\eta>0$, such that for all sufficiently small $\delta$, the following event holds with probability at least $1-\epsilon$: no loop in $\mathcal{L}\dis_D$ has distance less than $\eta$ from $\partial D$ and diameter larger than $\epsilon$. Moreover, for all $\epsilon>0$, there exists $\eta'>0$, such that for sufficiently small $\delta$,
	\[ \mu^{\#\delta}_D (\{ l\in \mathscr{L}_D: \diameter (l)\ge \epsilon; l\cap (\partial D)^{\eta'}\not= \emptyset \}) \le \epsilon, \]
	where $(\partial D)^{\eta'}$ denotes the set of points at distance less than $\eta'$ from $\partial D$.
\end{corollary}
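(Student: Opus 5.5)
The plan is to combine Corollary~\ref{loop addtion2} with Schramm's finiteness theorem (Theorem~\ref{theorem schramm}) and the Beurling estimate (Lemma~\ref{beurling estimate}). The point is that every macroscopic loop of $\mathcal{L}\dis_D$ must be ``attached'' to some branch of Wilson's algorithm. By Corollary~\ref{loop addtion2}, couple $\mathcal{L}\dis_D$ with the wired UST sampled with the Good Ordering. Each loop $l$ of the soup is added to the first branch $\gamma^k$ that it intersects, at the first vertex of $\gamma^k$ that it visits. It therefore becomes a sub-path of the random walk $X^k$.

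The first step applies Theorem~\ref{theorem schramm} with $\epsilon$ replaced by $\epsilon/2$. With probability at least $1-\epsilon/2$, every random walk started after the first $j_0$ branches has diameter at most $\epsilon/2$. On this event, any loop of diameter at least $\epsilon$ is attached to one of the first $j_0$ branches, and so lies inside one of the walks $X^1,\dots,X^{j_0}$. It then suffices to bound, for these finitely many walks, the probability that a sub-excursion of diameter $\ge\epsilon$ comes within $\eta$ of $\partial D$.

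The second step treats these walks. Each $X^k$ is stopped on $\partial\Gamma\dis_D$, or on earlier branches, which only shortens it. Suppose a sub-path of $X^k$ of diameter $\ge\epsilon$ visits a vertex $v$ with $d(v,\partial D)<\eta$. Then at the first time the walk comes within $\eta$ of $\partial D$, the remaining path either exits $B(v,\epsilon/4)$ before hitting $\partial\Gamma\dis_D$, or the whole sub-path stays in a neighbourhood whose size is controlled. More cleanly, I would take the successive times at which $X^k$ enters $(\partial D)^{\eta}$ after travelling at least $\epsilon/4$. At each such time, the strong Markov property and Lemma~\ref{beurling estimate} (with $r=\epsilon/4$) give a probability at most $\epsilon'$ of escaping $B(v,\epsilon/4)$ without being killed. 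Before the walk dies, the number of such attempts is at most one with high probability. Summing over the $j_0$ branches and choosing $\epsilon'=\epsilon/(2j_0)$ gives the first claim. The main subtlety is that the loop need not start at the point where it comes close to $\partial D$. I will handle this by noting that if the loop reaches the $\eta$-neighbourhood and still has diameter $\ge\epsilon$, then either before or after that visit it travels distance $\epsilon/2$. The "after" portion is controlled by Beurling. For the "before" portion, I reverse the loop: the loop measure is invariant under rerooting, so one may root the loop at its point closest to $\partial D$.

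For the measure statement, I would instead use the Poisson structure directly. Set $A=\{l:\diameter(l)\ge\epsilon,\ l\cap(\partial D)^{\eta'}\neq\emptyset\}$. The number of loops of $\mathcal{L}\dis_D$ in $A$ is Poisson with mean $\mu\dis_D(A)$, so
\[
\mathbb{P}\bigl(\text{some loop lies in } A\bigr)=1-e^{-\mu\dis_D(A)}.
\]
Apply the first part with $\epsilon''$ small (and $\epsilon''\le\epsilon$ in diameter) to get $\eta'$ with $1-e^{-\mu\dis_D(A)}\le\epsilon''$. This yields $\mu\dis_D(A)\le-\log(1-\epsilon'')\le\epsilon$. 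I expect the main obstacle to be the middle step: turning the Beurling estimate into a bound for loops embedded in a stopped walk. It needs care both with the rerooting argument and with the uniformity in $\delta$ when summing over branches.
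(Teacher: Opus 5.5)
Your outline is the paper's proof. Theorem~\ref{theorem schramm} reduces everything to the first $j_0$ walks. Lemma~\ref{beurling estimate} is then applied at each walk's first entrance into $(\partial D)^{\eta}$ with error $\epsilon/(2j_0)$, followed by a union bound. The measure statement comes from $\mathbb{P}(\text{some loop lies in }A)=1-e^{-\mu^{\#\delta}_D(A)}$.

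You are also right that this only controls the part of a loop after that entrance time, since a loop attached to $\gamma^k$ may begin before it. The paper's event $C(j,\eta)$ likewise constrains the walk only after $\tau^{\#\delta,j}$, and its proof passes over this case.

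However, your remedy is a step that would fail, for four reasons.
\begin{itemize}
\item In the coupling of Corollary~\ref{loop addtion2}, a soup loop's root is forced to be the lowest-index vertex of $\gamma^k$ it meets, and its position inside $X^k$ is fixed. Rerooting invariance of $\mu^{\#\delta}$ therefore gives you no freedom there.
\item The point of the loop closest to $\partial D$ is not a stopping time, so neither the strong Markov property nor Lemma~\ref{beurling estimate} can be applied there.
\item Reversing the loop is not a symmetry of the loop measure for the nonreversible chains allowed in Section~\ref{graph}.
\item If you drop the coupling and reroot loops near $\partial D$ directly in the Poisson measure, you lose the reduction to $j_0$ walks. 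You would then need a union bound over all vertices of $(\partial D)^{\eta}$, whose number blows up as $\delta\to0$; a fixed-size Beurling bound per root cannot absorb that.
\end{itemize}

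The missing idea is to use that an attached loop is a segment of $X^k$ that both starts and ends at its root, combined with a second scale.
\begin{itemize}
\item Choose $\eta_1$ from Lemma~\ref{beurling estimate} with radius $\epsilon/4$ and error $\epsilon/(4j_0)$. Then choose $\eta\le\eta_1/2$ from the same lemma with radius $\eta_1/2$ and error $\epsilon/(4j_0)$.
\item Let $\sigma_1\le\sigma$ be the first entrance times of $X^k$ into $(\partial D)^{\eta_1}$ and $(\partial D)^{\eta}$.
\item Work on the event that after $\sigma$ (resp.\ $\sigma_1$) the walk stays in $B(X^k(\sigma),\eta_1/2)$ (resp.\ $B(X^k(\sigma_1),\epsilon/4)$) until it is killed. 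Killing on $\mathcal{T}_{k-1}$ only shortens the walk, so by the strong Markov property this event fails with probability at most $\epsilon/(2j_0)$.
\end{itemize}
Now suppose an attached loop meets $(\partial D)^{\eta}$.
\begin{itemize}
\item The loop returns to its root after time $\sigma$, so the root lies within $\eta_1/2$ of $X^k(\sigma)$, hence in $(\partial D)^{\eta_1}$.
\item Therefore the loop starts after $\sigma_1$. It lies in $B(X^k(\sigma_1),\epsilon/4)$ and has diameter at most $\epsilon/2$.
\end{itemize}
Summing over the $j_0$ branches and adding the $\epsilon/2$ from Theorem~\ref{theorem schramm} gives the first claim. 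Your Poisson argument then gives the second unchanged.
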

\begin{proof}
	Fix $\epsilon>0$. By Theorem \ref{theorem schramm}, there exists $j_0(\epsilon)$ such that the following event, denoted by $C'(j_0(\epsilon))$, has probability at most $\epsilon/2$: there exists at least one loop erased after sampling $j_0(\epsilon)$ branches that has diameter at least $\epsilon$.
	By Lemma \ref{beurling estimate}, there exist $\eta >0$ and $\delta(\eta)>0$ such that for any $\delta <\delta(\eta)$ and any vertex $v\in \Gamma\dis \cap (\partial D)^{\eta}$, 
	\[ \mathbb{P}_v (X^{\#\delta} \textit{ exits } B(v,\epsilon /2) \textit{ without hitting } \partial \Gamma^{\#\delta}_D ) < \frac{\epsilon}{2j_0(\epsilon)}. \]
	For any $j\in \mathbb{N}^*$, let $\mathcal{T}_j\dis$ be the union of the first $j$ branches. Let $\tau ^{\#\delta,j}$ be the hitting time of $(\partial D)^{\eta}\cup \mathcal{T}_{j-1}\dis$ by the $j$-th random walk in Wilson's algorithm. For all $j\in \mathbb{N}^*$, let $C(j,\eta)$ be the event that the $j$-th random walk, after time $\tau^{\#\delta,j}$, exits $ B(X\dis (\tau ^{\#\delta,j}),\epsilon /2) $ without hitting $\partial \Gamma^{\#\delta}_D \cup \mathcal{T}_{j-1}\dis$. Then the probability of $C(j,\eta)$ is bounded by $\frac{\epsilon}{2j_0(\epsilon)}$, uniformly over $j$.
	
	Denote by $A(\epsilon,\eta)$ the set of loops in D that have diameter larger than $\epsilon $ and distance less than $\eta$ from $\partial D$. By a union bound, for any $\delta < \min (6^{-j_0(\epsilon)}\delta_0,\delta(\eta))$, we have
	\begin{equation*}
		\mathbb{P}(\mathcal{L}\dis_D \cap A(\epsilon,\eta) \not=\emptyset ) \le \mathbb{P}\left( (\cup_{j\le j_0(\epsilon)} C(j,\eta) ) \cup C'(j_0(\epsilon)) \right)
		\le \sum_{j\le j_0(\epsilon)} \mathbb{P} ( C(j,\eta) )  +\epsilon/2
		\le \epsilon.
	\end{equation*}
	To bound the loop measure of $A(\epsilon,\eta')$, since the random walk loop soup is a Poisson point process, it suffices to take $\eta '$ such that $\mathbb{P}(\mathcal{L}\dis_D \cap A(\epsilon,\eta') \not=\emptyset ) \le 1- \exp(-\epsilon)$.
\end{proof}

\subsection{Concatenation Probability}\label{Concatenation probability}
By Corollary \ref{loop addtion2}, under the good ordering, for $\epsilon>0$, we can couple $(\mathcal{L}^{\#\delta} , \mathbb{L}^{\#\delta}, \mathbb{L}^{\#\delta} _{\epsilon})$, such that each loop in $\mathbb{L}^{\#\delta}_{j} $ is a concatenation of loops in $\mathcal{L}^{\#\delta}$ with a chosen root. 
More precisely, for each vertex $v$, let $l^{\#\delta}_v \in \mathbb{L}^{\#\delta}$ be the loop erased in Wilson's algorithm. Under this coupling, there are $M^{\#\delta}(v)$ loops rooted at $v$,  $l_{v,1}^{\#\delta} , \cdots , l^{\#\delta}_{v,M(v)} \in \mathcal{L}^{\#\delta}$, such that
\[ l_v^{\#\delta} = l_{v,1}^{\#\delta} \oplus \cdots \oplus l^{\#\delta}_{v,M^{\#\delta}(v)}.\]
We define the number of $\epsilon$-large loops rooted at $v$ by $M^{\#\delta}_{\epsilon}(v)=\# \{ m\le M(v): \diameter(l\dis_{v,m}) >\epsilon \}$.
Then we prove that, with high probability, the loops erased in some branches of Wilson's algorithm are concatenated by one large loop and some tiny loops in RWLS. 
\begin{lemma}\label{proba of coming back}
	For $j>0$, let $\mathcal{T}_j\dis$ denote the union of first $j$ branches in Wilson's algorithm, and let $Y^{j+1,\#\delta}$ denote the $(j+1)$-th branch. Let $N^{j+1,\#\delta}$ be the number of iterations in the greedy Wilson's algorithm. Then for $\epsilon>0$, for any $K>0$, as $\delta \to 0$, 
	\[\mathbb{P}( N^{j+1}\le K;\ \exists v\in Y^{j+1,\#\delta} : M\dis _{4\epsilon }( v ) \ge 2) |\mathcal{T}_j\dis )=K o_{\delta} (1).\]
\end{lemma}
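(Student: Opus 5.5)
Under the coupling of Corollary~\ref{loop addtion2}, conditionally on $\mathcal{T}_j\dis$ and on the branch $Y^{j+1,\#\delta}$, the loops attached at the vertices of $Y^{j+1,\#\delta}$ are the loops of an independent random walk loop soup in $D\setminus \mathcal{T}_j\dis$, grouped according to the first vertex of the branch they meet (Proposition~\ref{loop addition}). The event $M\dis_{4\epsilon}(v)\ge 2$ means that two distinct soup loops of diameter $>4\epsilon$ are rooted at the same vertex $v$. Since the number of loops in the soup that are rooted at $v$ with diameter $>4\epsilon$ is Poisson with mean $\mu(v):=\mu^{\#\delta}(\{l\ni v:\diameter l>4\epsilon,\ l\text{ avoids } \gamma_0,\dots,\gamma_{s-1}\})$, the conditional probability of having at least two is at most $\mu(v)^2/2$. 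The plan is to bound $\sum_{v\in Y^{j+1}} \mu(v)^2 \le (\max_v \mu(v))\cdot \sum_v \mu(v)$ on the event $N^{j+1}\le K$. The first factor should be $o_\delta(1)$, and the second should be $O(K)$.

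\textbf{Step 1: a uniform pointwise bound $\max_v \mu(v)=o_\delta(1)$.} The mass of loops through a fixed vertex $v$ with diameter $>4\epsilon$ is bounded by $\sum_{n}\frac1n\mathbb{P}_v(\text{walk returns to }v\text{ at step }n\text{ after exiting }B(v,2\epsilon))$. This is at most the probability that a walk from $v$, after reaching distance $2\epsilon$, ever comes back to the single vertex $v$ before being killed at $\partial D$. I would control this using the uniform crossing estimate~(iii) together with bounded density~(ii). The crossing estimate gives that a walk started at distance $\epsilon$ must cross $\asymp\log(\epsilon/\delta)$ dyadic annuli around $v$. In each annulus it has probability bounded below (by $\alpha_0$-type RSW arguments) of exiting to $\partial D$ or of circling around without hitting a prescribed small target; equivalently, hitting the ball $B(v,\delta)$, which contains at most $C$ vertices, costs a factor $(1-c)^{\log(\epsilon/\delta)}=o(1)$. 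Hence $\mu(v)\le \sup_v\mathbb{P}(\text{hit }v\text{ from }\partial B(v,2\epsilon))=o_\delta(1)$ uniformly in $v$ and in the conditioning on $\mathcal{T}_j\dis$, since removing vertices only decreases the mass. This is the main obstacle: it is essentially a "walks do not hit points" estimate that must be derived from RSW alone, iterating the crossing estimate in annuli, in the spirit of the Beurling-type Lemma~\ref{beurling estimate}.

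\textbf{Step 2: the total mass is $O(K)$.} I would avoid summing over vertices and instead use the greedy algorithm directly. Every soup loop of diameter $>4\epsilon$ rooted on the branch appears inside the erased loop $l_v$, hence inside the walk $X^{j+1}$. A loop of diameter $>4\epsilon$ must leave each ball of radius $r(\epsilon)<\epsilon$ of the greedy decomposition, so it spans at least one full iteration $[\tau_n,\tau_{n+1}]$. A subpath of diameter $>4\epsilon$ starting and ending at the same vertex needs at least two ball exits. Consequently, on $\{N^{j+1}\le K\}$ the total number of $4\epsilon$-large soup loops attached to the branch is at most $K$. The event $M_{4\epsilon}(v)\ge2$ then requires that, among at most $K$ returns of the walk to previously visited branch vertices at the end of an iteration segment, one segment ends exactly at the same root vertex as an earlier large loop.

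\textbf{Step 3: conclusion by conditioning.} Condition on the walk up to the end $\tau_n$ of the first large loop at $v$, for each $n\le K$. By the strong Markov property, the probability that the subsequent walk, after moving distance $4\epsilon$ away, returns to the specific vertex $v$ is at most the bound of Step~1, which is $o_\delta(1)$. A union bound over the at most $K$ iterations gives $K\,o_\delta(1)$.
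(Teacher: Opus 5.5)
Your Steps 2--3 follow the paper's argument. A second $4\epsilon$-large soup loop rooted at $v$ forces the walk, after some iteration time, to move macroscopically away from $v$ and come back to exactly $v$. One then concludes with the strong Markov property at that time and a union bound over the at most $K$ iterations.

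Two repairs are needed there:
\begin{enumerate}
\item The ``end of the first large loop at $v$'' is not a stopping time. The splitting of $l_v$ into soup loops uses extra randomness, and $v$ itself is only known once the branch is complete. As in the paper, work instead with $v_n:=Y_n(s_{n+1})$, which is determined at time $\tau_{n+1}$. Then check that $M_{4\epsilon}(v)\ge 2$ forces $v=v_n$ for some $n\le N$, with the walk leaving $B(v_n,\epsilon)$ after $\tau_{n+1}$ and returning to $v_n$ before being killed.
\item Drop the Poisson plan of your first paragraph. The event $\{N^{j+1}\le K\}$ is not a function of the branch, so conditioning on it destroys the Poisson structure. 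Your bound of $\mu(v)$ by a return probability is also off: unrooted loops through $v$ carry weight $1/k$, with $k$ the number of visits to $v$, not $1/n$. Moreover, summing over $n$ gives an expected number of returns, not a probability.
\end{enumerate}

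The genuine gap is Step 1, the only nontrivial input, and the route you propose cannot work. The claimed factor $(1-c)^{\log(\epsilon/\delta)}$ is false already for simple random walk on $\delta\mathbb{Z}^2$. There, the probability of hitting a given vertex from distance $\epsilon$ before leaving a disk of radius $R$ is of order $\log(R/\epsilon)/\log(R/\delta)$, so it decays only logarithmically.

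The mechanism is what fails. From a small annulus around $v$ the walk has no probability bounded below of escaping to $\partial D$, and ``circling around'' does not stop it from moving inward later. The distance to $v$ performs a kind of one-dimensional walk in logarithmic scale. Hitting $v$ is then a gambler's-ruin event whose probability tends to $0$ only because that walk is balanced. The crossing estimate bounds inward and outward crossings from below but says nothing about their balance. I would expect (ii)--(iii) to be compatible with a scale-invariant inward drift towards $v$, with the distance to $v$ behaving like a Bessel process of dimension $<2$; under such a drift, $v$ is hit from distance $\epsilon$ with probability bounded below.

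So the invariance principle is needed here, and this is how the paper proceeds:
\begin{enumerate}
\item Hitting $v$ forces entering $B(v,\epsilon')$.
\item For Brownian motion started at distance $\epsilon$ from $v$, optional stopping for $\log|B-v|$ bounds the probability of doing so before leaving $D$ by $\frac{\log\diameter(D)-\log\epsilon}{\log\diameter(D)-\log\epsilon'}$.
\item The invariance principle transfers this bound to the walk up to $o_\delta(1)$.
\item Letting $\epsilon'\to 0$ gives the uniform $o_\delta(1)$ bound. Killing the walk also on $\mathcal{T}_j^{\#\delta}$ only decreases the probability.
\end{enumerate}
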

\begin{proof}
	Let $(X^{j+1,\#\delta}(t):0\le t\le \tau )$ be the random walk coupled with $Y^{j+1,\#\delta}$, where $\tau$ is the time when hitting $\partial D \cup \mathcal{T}_j\dis$. 
	For $v\in Y^{j+1,\#\delta}$, if  $M^{\#\delta}_{4\epsilon} (v) \ge 2$, i.e. at least two loops with a diameter of more than $4\epsilon$ are rooted at $v$, then in the greedy Wilson's algorithm, $\# \{n\in [0,N^{j+1,\#\delta}]:Y^{j+1,\#\delta}_n(s^{j+1,\#\delta}_{n+1}) =v \}\ge 2$, and the random walk between two of these points has a diameter of more than $4\epsilon$. In other words, there exist indices $n_1< n_2 \le N$ and the corresponding times $t_1, t_2 \in[0,\tau]$ in the random walk, such that $v=Y^{j+1,\#\delta}_{n_1} (s^{j+1,\#\delta}_{n_1+1})=Y^{j+1,\#\delta}_{n_2}(s^{j+1,\#\delta}_{n_2+1})=X^{j+1,\#\delta}(t_1)=X^{j+1,\#\delta}(t_2)$, and $\diameter (X^{j+1,\#\delta}[t_1,t_2]) \ge 4\epsilon$. 
	
	For $n\in [0,N^{j+1}-1]$, let $\tau'_n = \inf \{t>\tau^{j+1}_{n+1}: X^{j+1}(t) \in (B(v,\epsilon))^c \}$, let $\tau''_n = \inf \{t>\tau '_n: X^{j+1}(t)=v  \}$.
	We say that $Y^{j+1,\#\delta}_n$ is $\epsilon$-returnable if the random walk will, before hitting the boundary $\partial D \cup \mathcal{T}_j$, run a distance of $\epsilon$ from $v$ and return to $v$, precisely $\tau'_n< \tau_n''<\tau_{\partial D \cup \mathcal{T}_j}$.
	Then for $v\in Y^{j+1,\#\delta}$, if  $M^{\#\delta}_{4\epsilon} (v) \ge 2$, we have $v=Y^{j+1,\#\delta}_{n} (s^{j+1,\#\delta}_{n+1})$ for some $n$ and that $Y^{j+1,\#\delta}_n$ is $\epsilon$-returnable.
	
	Next we show that the probability of a point being $\epsilon$-returnable is uniformly $o_{\delta}(1)$ as $\delta \to 0$, uniformly in the choice of the points and the sampled trees. For $\epsilon'>0$, consider a 2D Brownian motion starting from $\epsilon$, let $t_{\epsilon' }$ be its hitting time of $B(0,\epsilon')$, let $t_D$ be its hitting time of $\partial D$.  By stopping time theorem and the fact that $\mathbb{E} [ \log(\| B_{t_{\epsilon'} \wedge t_D}\|)|t_{\epsilon'} > t_D]  \le \log(\diameter(D))\mathbb{P}(t_{\epsilon'} > t_D)$, we have
	\begin{equation}\label{2D BM estimate}
		\mathbb{P}(t_{\epsilon'} < t_D) \le \frac{\log(\diameter(D)) -\log \epsilon}{\log(\diameter(D))-\log \epsilon'}.   
	\end{equation}
	By the invariance principle, for each vertex $w \in \partial^{\#\delta } B(v,\epsilon)$,
	\[ \mathbb{P}^{\#\delta}(\text{The random walk emanating from $w$ hits $v$ before exitting }D ) =o_{\delta}(1). \]
	Then the probability of some point being $\epsilon$-returnable is $o_{\delta}(1)$. It follows that
	\begin{align*}
		&\mathbb{P}( N^{j+1}\le K;\ \exists v\in Y^{j+1,\#\delta} : M\dis _{4\epsilon }( v ) \ge 2) |\mathcal{T}_j\dis )\\
		&\le \mathbb{P}(N^{j+1}\le K; \exists n\in [0,N^{j+1}], Y^{j+1,\#\delta}_{n}(s_{n+1}) \text{ is $\epsilon$-returnable} )\\
		&\le \sum_{i=1}^K\mathbb{P}(Y^{j+1,\#\delta}_{n}(s_{n+1}) \text{ is $\epsilon$-returnable})\\
		&=Ko_{\delta}(1).
	\end{align*}
\end{proof}

An similar estimate is the ERROR term:
\begin{lemma}\label{error}
	For every $\epsilon >0$, there exists $\delta_e>0$ such that for all $\delta>\delta_e$, the probability of 'ERROR' occurring when sampling some branch is uniformly bounded by $\frac{2\log(\diameter(D)) -2\log \epsilon}{\log(\diameter(D))-\log r(\epsilon)}$.
\end{lemma}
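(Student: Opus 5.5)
Within a fixed branch, ERROR means that the walk returns to $\overline{B(v_0,r(\epsilon))}$ after first leaving $B(v_0,\epsilon)$ and before hitting the boundary or the previously sampled branches. I would estimate this event with the two-dimensional Brownian estimate \eqref{2D BM estimate} together with the invariance principle, in the same way as in the proof of Lemma \ref{proba of coming back}. (The statement presumably means ``for all $\delta<\delta_e$''; I will prove it in that form.)

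\textbf{Step 1 (reduction to a hitting event).} Fix a branch and its starting vertex $v_0$. If ERROR occurs, then $X(\tau_k)\in\overline{B(v_0,r(\epsilon))}$ for some $k\ge 2$. Since $\tau_1$ is the exit time of $B(v_0,\epsilon)$, this means the walk started at $w=X(\tau_1)\in\partial^{\#\delta}B(v_0,\epsilon)$ enters $\overline{B(v_0,r(\epsilon))}$ before hitting $\partial\Gamma^{\#\delta}_D\cup\mathcal T^{\#\delta}_{j}$. In particular it does so before exiting $D$.

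Enlarging the killing set only decreases this probability, so by the strong Markov property at $\tau_1$,
\[
\mathbb P(\text{ERROR in this branch}\mid \mathcal T_j^{\#\delta})\le \sup_{w\in \partial^{\#\delta}B(v_0,\epsilon)}\mathbb P_w\bigl(X^{\#\delta}\text{ hits }\overline{B(v_0,r(\epsilon))}\text{ before exiting }D\bigr).
\]
This bound is uniform in the previously sampled tree, and that uniformity is what makes the estimate hold for every branch.

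\textbf{Step 2 (Brownian comparison).} Apply \eqref{2D BM estimate} with $\epsilon'=r(\epsilon)$, recentred at $v_0$. Since $D\subset B(v_0,\diameter(D))$, a Brownian motion started on $\partial B(v_0,\epsilon)$ hits $\overline{B(v_0,r(\epsilon))}$ before exiting $D$ with probability at most
\[
p:=\frac{\log(\diameter(D))-\log\epsilon}{\log(\diameter(D))-\log r(\epsilon)}.
\]

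\textbf{Step 3 (transfer to the walk).} By the invariance principle, and using the Beurling estimate (Lemma \ref{beurling estimate}) so that hitting a ball of positive radius and exiting $D$ are continuity events of the limit, there is $\delta_e(\epsilon)$ such that for all $\delta<\delta_e$ and all relevant $w$ the discrete probability is at most $p+p=2p$. This is exactly the claimed bound.

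\textbf{Main obstacle.} The invariance principle as assumed is stated for a fixed starting point tending to $0$. The transfer must hold uniformly over $v_0$ ranging in $\overline D$ and over $w$ on the discrete circle. I would first try a compactness argument: suppose the bound fails along a sequence $(\delta_n,v_0^n,w_n)$. Extract a subsequence with $v_0^n\to z_0$ and $w_n\to w$. Translation of the assumption to the limit point, together with the fact that $\partial B(z_0,r(\epsilon))$ and $\partial D$ are hit transversally by Brownian motion, then gives a limit probability at most $p<2p$, a contradiction.

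The factor $2$ supplies the slack needed to absorb the $o_\delta(1)$ error. One can also take $\log\diameter(D)\ge 0$ after rescaling, so that $p>0$.
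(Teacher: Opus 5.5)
Your proposal is correct and follows the paper's route. The paper's proof is exactly the logarithmic Brownian estimate \eqref{2D BM estimate} with $\epsilon'=r(\epsilon)$, transferred to the walk by the invariance principle. Your strong Markov reduction at $\tau_1$, the compactness argument for uniformity in $v_0$ and in the restarting point, and your reading of the factor $2$ as slack (with $\delta<\delta_e$) make explicit what the paper leaves implicit. One small correction: $p$ is scale-invariant, so rescaling does nothing for its sign; what you need for $p>0$ is $\epsilon<\diameter(D)$. That is the only nontrivial regime, since for larger $\epsilon$ the first step already reaches the boundary and no ERROR can occur.
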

\begin{proof}
	The result follows by taking $\epsilon'=r(\epsilon)$ in Equation \eqref{2D BM estimate} and the Invariance principle.
\end{proof}

\subsection{Proof of Theorem \ref{scaling limit}}
Now we give a proof for Theorem \ref{scaling limit}.
\begin{proof}
	We first prove that the sequence $(\mathcal{L}^{\#\delta}_D)_{\delta>0}$ is Cauchy in the Lévy-Prokhorov metric associated with the topology on $\mathcal{M}(D)$. Suppose first that $\mathbb{C}\backslash D$ is locally connected.
	
	Given $\epsilon>0$, by Theorem \ref{theorem schramm}(Schramm's finiteness theorem), there exist $j_0= j_0(\epsilon)\in \mathbb{N}$ and $\delta_{\epsilon}>0$, such that for all $\delta < \delta_{\epsilon}$, 
	\begin{equation}\label{eq1}
		d_{LP} (\mathcal{L}^{\#\delta }_D, \mathcal{L}^{\#\delta}_{ j_0}) \le \epsilon.
	\end{equation}
	We focus on these $j_0$ branches. For each $j>0$, let $\mathcal{E}^{\#\delta}_j$ be the event that ``ERROR'' occurs in the $j-$th branch in the greedy Wilson's algorithm. Take
	\begin{equation*}
		r(\epsilon)=\exp\big(-2\epsilon ^{-1}j_0(\epsilon )(\log(\diameter (D))-\log \epsilon)+\log(\diameter (D))\big).
	\end{equation*}
	By lemma \ref{exp tail}, lemma \ref{proba of coming back} and lemma \ref{error}, we have:	
	\begin{align*}
		\mathbb{P} (d(\mathbb{L}^{\#\delta}_{\epsilon ,j_0}, \mathcal{L}^{\#\delta} _{j_0} )> 9\epsilon)
		\le & \mathbb{P}(\bigcup_{ j\le j_0} (\mathcal{E}^{\#\delta}_j \cup \{ \exists s\le S^j, M\dis_{4\epsilon }( Y_n^{j,\#\delta}(s) ) \ge 2\}) )\\
		\le& \sum_{j=1}^{j_0} \mathbb{P}(\mathcal{E}^{\#\delta}_j)+\sum_{j=1}^{j_0} \mathbb{P}( \exists s\le S^j, M_{4\epsilon }\dis( Y_n^{j,\#\delta}(s) ) \ge 2)\\
		\le& j_0(\epsilon)\frac{2\log(\diameter(D)) -2\log \epsilon}{\log(\diameter(D))-\log r(\epsilon)} \\
		&+\sum_{j=1}^{j_0} \mathbb{P}(N_j \ge K) +  \sum_{j=1}^{j_0}\mathbb{P}( N^{j}\le K;\ \exists v\in Y^{j,\#\delta} : M\dis _{4\epsilon }( v ) \ge 2)\\
		\le & \epsilon + j_0 (\epsilon )\beta(\epsilon) \exp(-\alpha(\epsilon) K) + j_0(\epsilon) K o_{\delta}(1)\\
		\le & 2\epsilon +j_0(\epsilon) K o_{\delta}(1).
	\end{align*} 
	Take $K= \lceil \alpha(\epsilon)^{-1} \log(\epsilon^{-1} j_0(\epsilon )\beta(\epsilon)) \rceil.$
	Take $\delta' >0$ such that for all $ \delta <\delta'$, the last term of the above equation is smaller than $\epsilon$, then  $\mathbb{P} (d(\mathbb{L}^{\#\delta}_{\epsilon, j_0}, \mathcal{L}^{\#\delta} _{j_0} )> 9\epsilon) \le 3\epsilon.$ Moreover, for all $ \delta <\delta'$, we have
	\begin{equation}\label{eq2}
		d_{LP}(\mathbb{L}^{\#\delta}_{\epsilon, j_0}, \mathcal{L}^{\#\delta} _{j_0} ) \le 9\epsilon.
	\end{equation}
	By corollary \ref{k branches}(with a remark that in the continuum version we choose the center point of each cell as the starting point, then the hypothesis is naturally satisfied), there exists $\delta'' >0$, such that for all $\delta _1, \delta_2< \delta''$,
	\begin{equation}\label{eq3}
		d_{LP} ( \mathbb{L}^{\#\delta_1}_{\epsilon,j_0}, \mathbb{L}^{\#\delta_2}_{\epsilon, j_0} ) \le \epsilon. 
	\end{equation}
	Combining the Equations \eqref{eq1}, \eqref{eq2} and \eqref{eq3}, for all $\delta_1, \delta_2 <\min \{\delta_{\epsilon} ,\delta', \delta''\}$, we have
	\begin{align*}
		d_{LP}(\mathcal{L}^{\#\delta_1}_D, \mathcal{L}^{\#\delta_2}_D)&\le 
		d_{LP} (\mathcal{L}^{\#\delta_1 }_D, \mathcal{L}^{\#\delta_1}_{ j_0})+
		d_{LP} (\mathcal{L}^{\#\delta_1}_{ j_0},\mathbb{L}^{\#\delta_1 }_{\epsilon, j_0} )
		+d_{LP} ( \mathbb{L}^{\#\delta_1}_{\epsilon,j_0}, \mathbb{L}^{\#\delta_2}_{\epsilon, j_0} )\\
		&\ \ \ \ +d_{LP} (\mathbb{L}^{\#\delta_2 }_{\epsilon, j_0}, \mathcal{L}^{\#\delta_2}_{ j_0})+
		d_{LP} ( \mathcal{L}^{\#\delta_2}_{ j_0},\mathcal{L}^{\#\delta_2 }_D)\\
		&\le 21 \epsilon.
	\end{align*}
	Then the sequence $(\mathcal{L}^{\#\delta}_D)_{\delta>0}$ is Cauchy in the case where $\mathbb{C}\backslash D$ is locally connected. If $\mathbb{C}\backslash D$ is not locally connected, by Corollary \ref{boundary}, for all $\epsilon>0$, there exist $\eta>0$ and a domain $D'\subset D$, such that $\partial D'\subset (\partial D)^{\eta}$, $\mathbb{C}\backslash D'$ is locally connected and $d_{LP}(\mathcal{L}\dis_D,\mathcal{L}\dis_{D'})\le \epsilon$. Since $(\mathcal{L}^{\#\delta}_{D'})_{\delta>0}$ is Cauchy, we conclude that $(\mathcal{L}^{\#\delta}_D)_{\delta>0}$ is also Cauchy.
	
	According to \cite{lawler2004randomwalkloopsoup} (with more details provided in Appendix \ref{grid lattice}), the limit for the sequence of grid lattice is $\mathcal{L}_D$. Since the above proof holds for all sequences of graphs satisfying the hypothesis, we can interlace two such sequences and obtain the same limit. Hence the limit must be $\mathcal{L}_D$.
\end{proof}

\subsection{Proof of Corollary \ref{cor}}
This proof only uses the fact that the loop soups are Poisson point processes and that these geometrical properties that we consider are closed for the topology of loop space.
\begin{proof}
	Take $\epsilon_0>0$ such that $A\subset \{l\in \mathscr{L}_D; \diameter(l) \ge \epsilon_0\}$. 
	We define the counting function \[N(\cdot,A): \mathcal{M}(D)\to \mathbb{N};\ N((l_i)_{i\in \mathbb{N}}, A) := \card \{ i\in \mathbb{N}: l_i \in A\}. \]
	We prove that $N(\cdot, A)$ is continuous conditioned that there are no loops in $\partial A$. Assume that $L^n=(l_i^n)_{i\in \mathbb{N}}$ converges to $L=(l_i)_{i\in \mathbb{N}}$ in the space of loop soups. Assume that no loop in $L$ is contained in $\partial A$. Since $A$ is an open set, we take $\epsilon_1 <\epsilon_0/2$ such that $\cup_{i\in \mathbb{N}:l_i\in A} B_\mathscr{L}(l_i ,\epsilon_1) \subset A$ and $\cup_{i\in \mathbb{N}:l_j\notin A,\diameter{l}\ge \epsilon_0 /2} B_\mathscr{L}(l_j ,\epsilon_1) \subset A^c$. If $d_{\mathcal{M}(D)}(L^n, L)< \frac{\epsilon_1}{4}$, then there exists $\lambda_n: I\to \mathbb{N}$ such that 
	\[ \sup\{ d(l_i,l_{\lambda_n(i)}^n), \forall i\in I; \frac{\diameter(l_i)}{2}, \forall i\in \N \backslash I; \frac{\diameter (l^n_j)}{2}, \forall j\in \N \backslash \lambda_n (I)\}< \frac{\epsilon_1}{2}. \]
	Then every loop $l^n_j\in L^n$  either has a diameter smaller than $\epsilon_1$ or is paired with some loops $l_i=l_{\lambda^{-1}_n(j)}\in L$ with $d(l_i,l_{\lambda_n(i)}^n)<\epsilon_1/2$. For the unpaired loop, two cases may occur. If $l_i\in A$, then $l^n_j\in A$. Otherwise, we further distinguish two subcases: if $\diameter(l_i)< \epsilon_0/2$, then $\diameter (l^n_j)<\epsilon_0/2 +\epsilon_1< \epsilon_0 $; while if $\diameter (l_i)>\epsilon_0/2$, then $l^n_j\in B(l_i,\epsilon_1)\subset A^c$. Hence $N(L^n,A)= N(L,A)$, which proves the continuity.
	
	Since the Brownian loop soup almost surely contains no loop in $\partial A$, the map $N(\cdot,A) $ is almost surely continuous, the result follows from Theorem \ref{scaling limit}.
\end{proof}

\begin{appendix}
	\section{Particular case: grid lattice}\label{grid lattice}
	In this section, we state the result of Lawler and Trujillo Ferreras \cite{lawler2004randomwalkloopsoup} and deduce Theorem \ref{scaling limit} in the case $\Gamma^{\#\delta}=\delta \mathbb{Z}^2\cap D$.
	We first introduce some notation to state the theorem. For each positive integer $N$, we define the scaling function on the set of loops by 
	\[ \Phi_N \gamma(t) :=N^{-1} \gamma(tN^2), \ 0\le t\le \frac{t_{\gamma}}{N^2}. \]
	Due to the discretization, we introduce the following auxiliary functions. For $t\ge (5/8) N^{-2}$ and positive integer $k$, we let 
	\[ \varphi_N(t):=\frac{k}{N^2} \ \text{ if }\ \frac{k}{N^2}-\frac{3}{8N^2}\le t< \frac{k}{N^2} +\frac{5}{8N^2}. \]
	To distinguish the root, for $z\in \mathbb{C}$ and $z_0\in \mathbb{Z}^2$, we define \[ \psi _N(z) :=\frac{z_0}{N}\  \text{ if }\ \max\{ |\Re(Nz-z_0)|, |\Im (Nz-z_0)|\} <\frac{1}{2}. \]
	
	For the random walk loop soup, for each positive integer $N$, we regard the loop $\gamma^{\#\frac{1}{N}}$ as a loop scaled from the random walk loop over $\mathbb{Z}^2$, i.e., there is a loop over $\mathbb{Z}^2$, $\gamma ^{rw}$, such that
	\[ \gamma^{\#\frac{1}{N}} =\frac{1}{N} \gamma^{rw} (2N^2 t), \ \ 0\le t\le \frac{t_{\gamma^{rw}}}{2N^2}. \]
	
	\begin{theorem}[Corollary 5.4 \cite{lawler2004randomwalkloopsoup}]\label{lawler}
		There exists $c>0$ such that for every $N\in \mathbb{N}$ and $\theta<2$; there exists a coupling of a Brownian loop soup restricted to $D$ and a $(1/N)$-random walk soup restricted to $D$ such that, except on an event of probability at most $c (\log N)^{1/2} N^{2-(5/4)\theta}$, we have the one-to-one correspondence between 
		\[ \{ \gamma \in \Phi_N (\mathcal{L}): \varphi_N(t_{\gamma})>N^{\theta-2},\ |\psi_N(\gamma(0))|<1\} \]
		and \[ \{ \gamma^{\#\frac{1}{N}} \in \mathcal{L}^{\#\frac{1}{N}} : t_{\gamma^{\# 1/N }} >N^{\theta-2}, |\gamma^{\#\frac{1}{N}}(0)|<1\}. \]
		If $\gamma$ and $\gamma^{\#1/N}$ are paired in this correspondence, then 
		\[|t_{\gamma}-t_{\gamma^{\#1/N}}| \le \frac{5}{8} N^{-2}, \]
		\[ \sup_{s\in [0,1]} |\gamma (st_{\gamma})-\gamma^{\#\frac{1}{N}} (st_{\gamma^{\#1/N}})|\le c \frac{\log N}{N}. \]
	\end{theorem}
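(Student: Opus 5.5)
The plan is to build the coupling directly from two ingredients: a comparison of intensities of the two Poisson point processes at the level of (root, time-duration) classes, and a strong (KMT-type) approximation coupling of random walk bridges with Brownian bridges. I will work on the unscaled lattice $\mathbb{Z}^2$ and rescale by $\Phi_N$ at the end. Throughout, the root is discretised by $\psi_N$ and the duration by $\varphi_N$; this is why the statement compares $\varphi_N(t_\gamma)$ and $\psi_N(\gamma(0))$ with the lattice quantities. Note that both rooted measures assign mass $1/t$ per unit of rooted time. This is the continuum $\frac{1}{2\pi t^2}\,dt\,dz$ against the discrete $\frac{1}{k}p_k(z,z)$.

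\textbf{Step 1: matching the intensities.} For each $z_0\in\mathbb{Z}^2$ and each integer $k$, I compare two masses.
\begin{itemize}
\item The continuum mass is the $\mu$-mass of rooted Brownian loops whose root lies in the unit square around $z_0$ and whose duration lies in $[k-\tfrac38,k+\tfrac58)$.
\item The discrete mass is the rooted random walk loop mass $\frac{1}{2k}\,\mathbb{P}(S_{2k}=0)$ at $z_0$, with the time convention $2N^2t$ of the statement.
\end{itemize}
By the local central limit theorem with error term, $\mathbb{P}(S_{2k}=0)=\frac{1}{\pi k}+O(k^{-2})$. The shift $-\tfrac38,+\tfrac58$ is chosen precisely so that the leading terms $\int_{k-3/8}^{k+5/8}\frac{dt}{2\pi t^2}$ and $\frac{1}{2\pi k^2}$ agree to order $k^{-4}$. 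The discrepancy per $(z_0,k)$ is therefore $O(k^{-3})$, coming from the LCLT error.

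Poisson processes with intensities $\lambda,\lambda'$ can be coupled so that their counts differ with probability at most $|\lambda-\lambda'|$. Summing over the $O(N^2)$ roots with $|z_0|<N$ and over $k>N^\theta$ gives a total mismatch probability of $O(N^2\cdot N^{-2\theta})$. Under this coupling, on the complement of the mismatch event, the loops in each class $(z_0,k)$ are in one-to-one correspondence. Each matched pair then has duration difference at most $\tfrac58$, which becomes $\tfrac58N^{-2}$ after scaling.

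\textbf{Step 2: coupling the shapes.} Conditionally on its class, a matched discrete loop is a random walk bridge of length $2k$. The continuum loop is a Brownian bridge of the corresponding duration, whose root is uniform in the square (contributing an $O(1)$ displacement). I will use the KMT coupling for bridges. It gives
\[
\mathbb{P}\Bigl(\sup_{s\in[0,1]}|\gamma(st_\gamma)-\gamma^{rw}(st_{\gamma^{rw}})|>c\log k\Bigr)\le c\,k^{-p}
\]
for any fixed $p$, after the time-change $s\mapsto st$, which costs only $O(\log k)$ by the modulus of continuity of the bridge. Since $k\le$ poly$(N)$ with overwhelming probability, this gives $c\log N$, and after rescaling by $N^{-1}$ it becomes $c\frac{\log N}{N}$. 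The contribution from very long loops (durations $\ge N^{2+\alpha}$) is handled separately: their expected number is bounded by $N^2\int_{N^{2+\alpha}}^\infty t^{-2}\,dt$, which is small.

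\textbf{Main obstacle.} The delicate point is the error exponent $2-\tfrac54\theta$ together with the $(\log N)^{1/2}$ factor. The expected number of matched loops is $\asymp N^2\sum_{k>N^\theta}k^{-2}\asymp N^{2-\theta}$. A crude union bound of the KMT failure probabilities over these loops, together with the LCLT mismatch, must be balanced against the truncation. I expect the bound $N^{2-5\theta/4}$ to come from optimizing a cutoff between the LCLT error and the KMT tail. I expect the $(\log N)^{1/2}$ to come from a Gaussian-tail estimate on the bridge oscillation at the duration-discretisation scale.

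My first attempt would follow Lawler--Trujillo Ferreras: make the LCLT error sharp, including the second-order term that the $\tfrac38/\tfrac58$ offset is designed to kill, and then do the bookkeeping over $k$ dyadically. Finally, restricting to $D$, and to loops of scaled diameter bounded below, only removes loops from both sides consistently, since the coupled loops are uniformly close. This is what Theorem \ref{scaling limit} needs for the grid case.
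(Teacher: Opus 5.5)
The paper gives no proof of this statement; it is imported from Lawler--Trujillo Ferreras \cite{lawler2004randomwalkloopsoup}. Your outline (Poisson-coupling the rooted-loop masses class by class in root cell and duration window, then coupling matched shapes through a KMT-type approximation of bridges) is essentially their argument and is sound. Three corrections:

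(i) The window $[k-\frac{3}{8},k+\frac{5}{8})$ makes $\int \frac{dt}{2\pi t^2}$ agree to $O(k^{-4})$ with the exact lattice mass $\frac{1}{2k}p_{2k}(0,0)=\frac{1}{2\pi k^2}-\frac{1}{8\pi k^3}+O(k^{-4})$, not with $\frac{1}{2\pi k^2}$. Your $O(k^{-3})$ bound suffices anyway.

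(ii) No optimization is needed at the end. The union bound you set up already gives $O(N^{2-2\theta}+N^{2-(1+p)\theta}+N^{-\alpha})\le c(\log N)^{1/2}N^{2-\frac{5}{4}\theta}$ for $p\ge \frac{1}{4}$ and $\alpha\ge\frac{1}{2}$. The stated rate is exactly what a per-loop bridge failure of order $k^{-1/4}\sqrt{\log k}$ would produce.

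(iii) Uniform closeness does not make restriction to $D$ act consistently on both soups, since a matched pair can straddle $\partial D$. This is harmless only because the sets in the statement, as in \cite{lawler2004randomwalkloopsoup}, consist of full-plane loops rooted in the unit disk.
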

	Next we prove the main result for the grid lattice case.
	\begin{corollary}
		Let $\Gamma^{\#\frac{1}{N}}=\frac{1}{N}\mathbb{Z}^2 \cap D$. In the sense of the loop soup topology, as $N\to +\infty$, the random walk loop soup converges in law to the Brownian loop soup, i.e.
		\[ \mathcal{L}_D^{\#\frac{1}{N}\mathbb{Z}^2} \xrightarrow[N\to +\infty]{(d)} \mathcal{L}_D. \]
	\end{corollary}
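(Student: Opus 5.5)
We deduce the corollary from Theorem~\ref{lawler} by exhibiting, for every $\epsilon>0$ and all $N$ large, a coupling of $\mathcal{L}_D$ and $\mathcal{L}^{\#\frac1N\mathbb{Z}^2}_D$ under which $d_{\mathcal{M}(D)}(\mathcal{L}_D,\mathcal{L}^{\#\frac1N\mathbb{Z}^2}_D)\le \epsilon$ with probability at least $1-\epsilon$. This gives convergence in the Lévy--Prokhorov metric. First we fix the time normalisation. The grid random walk, scaled as $\frac1N\gamma^{rw}(2N^2t)$, is a standard planar Brownian motion in the limit, so the loops in Theorem~\ref{lawler} are compared with the correct Brownian loop measure. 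Moreover $D$ is bounded, so after a translation and dilation (Brownian scaling invariance of the loop measure, together with the corresponding relabelling of $N$) we may assume $D\subset\{|z|<1\}$. The root constraints $|\psi_N(\gamma(0))|<1$ and $|\gamma^{\#1/N}(0)|<1$ are then automatic for loops in $D$, and restricting the coupling to loops contained in $D$ keeps it valid.

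Fix $\theta\in(8/5,2)$, so that the exceptional probability $c(\log N)^{1/2}N^{2-(5/4)\theta}$ tends to $0$. Outside the exceptional event, the loops with time duration larger than $N^{\theta-2}$ are matched one-to-one. Paired loops satisfy $\sup_s|\gamma(st_\gamma)-\gamma^{\#1/N}(st_{\gamma^{\#1/N}})|\le c\log N/N$. Parameterising both by $[0,1]$ proportionally to time, this bounds the rooted curve distance, and hence $d$ on $\mathscr{L}$, by $c\log N/N$. Define $\lambda$ on the finite set $I$ of matched loops. The supremum in $d_{\mathcal{M}(D)}$ is then at most
\[ \max\Big\{ c\tfrac{\log N}{N},\ \tfrac12\max\{\diameter(l): l \text{ unmatched in } \mathcal{L}_D\cup\mathcal{L}^{\#1/N}_D\}\Big\}. \]
Here $I$ is finite because the RWLS is a.s. finite.

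The main step is to show that unmatched loops, whose durations are at most of order $N^{\theta-2}$, have small diameter with high probability. There are two sources of short loops in each soup. The first is short loops that are nevertheless macroscopic. For the Brownian loop soup, the $\mu_D$-mass of loops with duration at most $t_0$ and diameter at least $\epsilon$ is bounded by
\[ \int_D\int_0^{t_0}\frac{1}{2\pi t^2}\,\mathbb{P}\big(\diameter(\text{bridge of duration } t)\ge\epsilon\big)\,dt\,dz \lesssim \int_0^{t_0} t^{-2}e^{-c\epsilon^2/t}\,dt \to 0 \]
as $t_0\to 0$. The analogous random walk estimate uses $\mu^{rooted}$ summed over roots in $D$ ($O(N^2)$ of them), loops of length $n\le 2N^{\theta}$, weight $1/n$, and the probability that a random walk bridge of length $n$ has diameter at least $\epsilon N$. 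That probability is bounded by a Gaussian-type tail $e^{-c\epsilon^2N^2/n}$, which is super-polynomially small in $N$ since $N^2/n\ge N^{2-\theta}/2$. This kills the $N^2\log N$ prefactor. Since the soups are Poisson point processes, a small expected number of such loops means that with high probability none are present.

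The second source is the boundary effect from the discretisation $\varphi_N$: a loop matched with a loop of duration just above the threshold may itself have duration just below it. This is harmless, because all such loops have duration of order $N^{\theta-2}$ and are therefore covered by the same estimate. I expect the random walk bridge diameter tail bound to be the only genuinely technical point. It follows from a standard reflection or maximal inequality, after conditioning the walk to return, at the cost of the local CLT factor $n^{-1}$ at most polynomially.

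Combining these bounds, outside an event of probability $o(1)$ we have $d_{\mathcal{M}(D)}\le\max(c\log N/N,\epsilon/2)\le\epsilon$, which concludes the proof.
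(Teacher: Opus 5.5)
This is the paper's argument. You use the coupling of Theorem~\ref{lawler} for loops of duration above $N^{\theta-2}$, Poisson first-moment bounds with Gaussian tails to rule out short loops of diameter at least $\epsilon$ in either soup, and a Lévy--Prokhorov conclusion.

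One step is not justified, and the paper's appendix passes over it in the same way: the claim that ``restricting the coupling to loops contained in $D$ keeps it valid.'' Restricting both whole-plane soups to loops contained in $D$ gives the correct marginals, but it does not preserve the one-to-one correspondence. A matched pair $(\gamma,\gamma^{\#1/N})$ can lose exactly one member, e.g.\ $\gamma\subset D$ while $\gamma^{\#1/N}$ crosses $\partial D$. The survivor is then an unmatched loop of \emph{long} duration, possibly macroscopic. So your assertion that unmatched loops have duration of order at most $N^{\theta-2}$ fails after restriction, and your short-loop estimates do not control these loops.

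The repair is a pair of boundary estimates, one for each soup.
\begin{itemize}
\item If $\gamma\subset D$ but $\gamma^{\#1/N}\not\subset D$, uniform closeness gives $\operatorname{dist}(\gamma,\partial D)\le c\log N/N$. The sets $\{\gamma\subset D:\ \diameter(\gamma)\ge\epsilon,\ \operatorname{dist}(\gamma,\partial D)\le\eta\}$ have finite $\mu$-mass. They decrease to $\emptyset$ as $\eta\downarrow 0$, because each loop in $D$ is a compact subset of the open set $D$. Hence, with high probability, $\mathcal{L}_D$ contains no such loop.
\item Symmetrically, if $\gamma^{\#1/N}\subset D$ but $\gamma\not\subset D$, then $\operatorname{dist}(\gamma^{\#1/N},\partial D)\le c\log N/N$. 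Here Corollary~\ref{boundary} applies: it is proved for any graphs satisfying the standing assumptions, in particular $\frac1N\mathbb{Z}^2$, and does not use the appendix. It gives $\eta>0$ such that, with probability at least $1-\epsilon$ for all small mesh, $\mathcal{L}^{\#1/N}_D$ has no loop of diameter larger than $\epsilon$ within distance $\eta$ of $\partial D$.
\end{itemize}
Take $N$ large enough that $c\log N/N<\eta$, and add these two events to your exceptional set. This closes the gap; the rest of your proposal is sound.
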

	\begin{proof}
		Without loss of generality, assume that $D$ is contained in the unit disk $\mathbb{D}$.
		For $\epsilon>0$, let $E(\epsilon)$ be the following event,
		\[ E(\epsilon) =\{\# \{ \gamma \in \Phi_N (\mathcal{L}): \varphi_N (t_{\gamma})\le N^{\theta-2}, |\psi _N(\gamma(0))|<1, \gamma\in D,\  \diameter (\gamma) \ge \epsilon\} \ge 1\}.\]
		Since the Brownian loop soup is a Poisson point process, we have
		\begin{align*}
			\mathbb{P} (E(\epsilon)) &= 1-\exp (- \mu(\varphi_N (t_{\gamma})\le N^{\theta-2}, |\psi _N(\gamma(0))|<1, \gamma\in D,\  \diameter (\gamma) \ge \epsilon))\\
			&\le 1-\exp (-2\int_D \int _0^{N^{\theta-2}} \frac{1}{2\pi t^2} \mu^{br}_t(z) (\diameter (\gamma  )\ge \epsilon ) dt\, dz )\\
			&\le 2\int_D \int _0^{N^{\theta-2}} \frac{1}{2\pi t^2} \mu^{br}_t(z) (\diameter (\gamma  )\ge \epsilon ) dt\, dz\\
			&\le 2area(D) \int_0^{N^{\theta-2}} \frac{1}{2\pi t^2} \mu^{br} (\diameter (\gamma)\ge \epsilon t^{-\frac{1}{2}}) \, dt.
		\end{align*}
		Since we can write a standard 2D Brownian bridge $W_t=B_t-tB_1 =W_t^{x} +i W_t^y$ where $W_t^x$ and $W_t^y$ are two 1D Brownian bridge, and by the property of 1D Brownian bridge, we have
		\begin{align*}
			\mu^{br} (\diameter (\gamma)\ge \epsilon t^{-\frac{1}{2}}) &\le 2\mathbb{P} (\sup_{s\in[0,1]} |W_s^x| \ge \frac{\epsilon}{2\sqrt{2t}})\\
			&\le 4 \mathbb{P}(\sup_{s\in[0,1]} W^x_s \ge \frac{\epsilon}{ 2\sqrt{2t}})\\
			&=4\exp (-\frac{\epsilon^2}{4t}).
		\end{align*}
		So by calculation, we have
		\begin{equation*}
			\mathbb{P} (E(\epsilon)) \le area(D) \frac{16}{\pi \epsilon^2} \exp(-\frac{\epsilon^2}{4}N^{2\theta}).
		\end{equation*}
		
		Similarly we denote by $E^{\#\frac{1}{N}}(\epsilon)$ the event
		\[ E^{\#\frac{1}{N}}(\epsilon) =\{\# \{ \gamma^{\#\frac{1}{N}} \in \mathcal{L}^{\# \frac{1}{N}} : t_{\gamma^{\# \frac{1}{N} }} \le N^{\theta-2}, |\gamma^{\#\frac{1}{N}}(0)|<1,\gamma^{\#\frac{1}{N}} \subset D, \diameter (\gamma^{\frac{1}{N}}) \ge \epsilon\} \ge 1\}. \]
		By the definition of random walk loop soup, we have
		\begin{align*}
			\mathbb{P}(E^{\#\frac{1}{N}}) &= 1- \exp( -\Lambda^{\#\delta } (t_{\gamma^{\#1/N}} \le N^{\theta-2}, |\gamma^{\#\frac{1}{N}}(0)|<1, \gamma^{\#\frac{1}{N}} \subset D, \diameter (\gamma^{\frac{1}{N}}) \ge \epsilon ) )\\
			&\le \Lambda^{\#\delta } (t_{\gamma^{\#1/N}} \le N^{\theta-2}, |\gamma^{\#\frac{1}{N}}(0)|<1,\gamma^{\#\frac{1}{N}} \subset D,\diameter (\gamma^{\frac{1}{N}}) \ge \epsilon)\\
			&\le \sum_{z\in D^{\#1/N}}\sum_{n=\frac{N\epsilon}{2\sqrt{2}}}^{N^{\theta}} \frac{1}{2n} 4^{-2n} \#\{ \gamma^{\#\frac{1}{N}}: | \gamma^{\#\frac{1}{N}}|=2n,\diameter (\gamma^{\#\frac{1}{N}}) \ge \epsilon \}.
		\end{align*}
		We can consider a simple random walk over $\mathbb{Z}^2$ denoted by $S^{2D}_n$, there are two independent 1D simple random walks $S^x$ and $S^y$ such that $S_j^{2D}=\frac{S_j^x+i S_j^y}{1+i}$, then 
		\begin{align*}
			4^{-2n} \#\{ \gamma^{\#\frac{1}{N}}: | \gamma^{\#\frac{1}{N}}|=2n,\diameter (\gamma^{\#\frac{1}{N}}) \ge \epsilon \} & = \mathbb{P}(S^{2D}_{2n}=0, \diameter (S^{2D}) \ge N\epsilon)\\
			&\le 4\mathbb{P} (S^x_{2n}=0, \sup_{0\le j\le 2n} (S^x_j) \ge \frac{N\epsilon}{2\sqrt{2}} )\\
			&\le 4\mathbb{P} ( \sup_{0\le j\le 2n} (S^x_j) \ge \frac{N\epsilon}{2\sqrt{2}} ).
		\end{align*}
		By Hoeffding's inequality, we have 
		\[   \mathbb{P} ( \sup_{0\le j\le 2n} (S^x_j) \ge \frac{N\epsilon}{2\sqrt{2}} ) \le 
		2n \exp(-\frac{N^2\epsilon^2}{32n}).\]
		Then 
		\begin{align*}
			\mathbb{P}(E^{\#\frac{1}{N}}(\epsilon)) &\le \sum_{z\in D^{\# 1/N}} \sum_{n=\frac{N\epsilon}{2\sqrt{2}}}^{N^{\theta}} 4\exp(-\frac{N^2\epsilon^2}{32n})\\
			&\le 4area(D) N^{2+\theta} \exp(-\frac{\epsilon^2}{32n} N^{2-\theta}). 
		\end{align*}
		
		By Theorem \ref{lawler}, take $\theta=\frac{9}{5}$, for $N$ large enough such that \[  c (\log N)^{1/2} N^{2-(5/4)\theta}+area(D) \frac{16}{\pi \epsilon^2} \exp(-\frac{\epsilon^2}{4}N^{2\theta}) + 4area(D) N^{2+\theta} \exp(-\frac{\epsilon^2}{32n} N^{2-\theta})<\epsilon,\]
		and $c\log N/N <\epsilon$,
		we have
		\begin{align*}
			\mathbb{P}(d_{\mathcal{M}(D)} (\mathcal{L}_D^{\#\frac{1}{N}},\mathcal{L}_D) \ge \epsilon) &\le c (\log N)^{1/2} N^{2-(5/4)\theta} + \mathbb{P}( E(\epsilon) \cup E^{\#\frac{1}{N}}(\epsilon) )\\
			&\le  c (\log N)^{1/2} N^{2-(5/4)\theta} + \mathbb{P}( E(\epsilon)) +\mathbb{P}( E^{\#\frac{1}{N}}(\epsilon) )\\
			&<\epsilon.
		\end{align*}
		Then we conclude by using the Lévy-Prokhorov distance.
	\end{proof}
	
	\section{Proof of Theorem \ref{theorem schramm}}\label{appendix B}
	Before giving the proof of Theorem \ref{theorem schramm}, we state an essential lemma which is a type of Beurling estimate. Compared with lemma \ref{beurling estimate}, this lemma gives a quantitative estimate and depends only on the uniform crossing estimate.
	\begin{lemma}[Lemma 4.17 in \cite{berestycki2018dimersimaginarygeometry}] \label{beurling}
		There exist constants $C,c,c'>0$ such that for all $connected$ set $K \subset \mathbb{C}$ such that the diameter of $K$ is at least $R$, let $v$ be an vertex, let $\delta_0$ be the same constant as the uniform crossing assumption,  then for all $\delta\in (0, C\  dist(v,K) \delta_0)$,
		\[ \mathbb{P}_v ( X \text{ exits } B(v,R)^{\#\delta} \text{ before hitting } K^{\#\delta}) \le c(\frac{dist (v,K)}{R})^{c'}. \]
	\end{lemma}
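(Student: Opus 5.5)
The plan is a multiscale annulus-crossing argument built entirely on the uniform crossing estimate (iii). Set $r=\operatorname{dist}(v,K)$. We may assume $r<R/4$, since otherwise the right-hand side is at least $c\,4^{-c'}$ and the bound is trivial after enlarging $c$. Fix the annuli
\[ \mathcal{A}_i=B(v,2^{i+1}r)\setminus B(v,2^{i}r), \qquad 1\le i\le \lfloor \log_2(R/(2r))\rfloor -1 .\]
Because $K$ is connected, meets $\overline{B(v,r)}$, and has diameter at least $R$, it cannot lie inside $B(v,R/2)$. Hence $K$ crosses every annulus $\mathcal{A}_i$ in the list, i.e.\ it contains a connected piece joining the inner circle of $\mathcal{A}_i$ to its outer circle.

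The core estimate is uniform. There is a $p>0$, depending only on $\alpha_0$, such that for every $i$ and every vertex $w$ on the inner circle of $\mathcal{A}_i$ (up to lattice error), the walk started at $w$ hits $K^{\#\delta}$ before exiting $B(v,2^{i+1}r)$ with probability at least $p$. To prove it:
\begin{itemize}
\item Choose a fixed number of translated and rotated copies of $\ell\delta R$ and $\ell\delta R'$ at scale $2^i r$. Arrange them as a chain whose consecutive target balls contain the next starting ball, so that the chain forms a closed circuit around $v$ inside $\mathcal{A}_i$.
\item Add a short chain of rectangles leading from $w$ into this circuit.
\item Applying (iii) successively with the strong Markov property, the walk follows the whole circuit with probability at least $\alpha_0^{M}$, where $M$ is a universal number of rectangles.
\item The walk's trajectory is a continuous curve once edges are linearized, and so is $K$'s crossing of $\mathcal{A}_i$. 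Following the circuit therefore forces the walk's path to intersect $K$.
\end{itemize}
The step "crossing $K$ implies hitting $K^{\#\delta}$" is where the discretization enters. I would take $K^{\#\delta}$ to be the vertices or edges met by $K$ and use planarity: a walk whose linearized path crosses $K$ must pass through an edge meeting $K$. The requirement that (iii) applies at scale $2^i r\ge r$ is exactly $\ell=2^ir/\delta\ge 1/\delta_0$, which is guaranteed by $\delta<C r\delta_0$ for a suitable $C$.

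Next, iterate with the strong Markov property. To exit $B(v,R)$, the walk must successively reach the inner circles of $\mathcal{A}_1,\mathcal{A}_2,\dots$, which happens at the stopping times where it first reaches distance $2^i r$. At each such time, conditionally on the past, it hits $K^{\#\delta}$ before leaving the current annulus's outer ball with probability at least $p$. The number of annuli is $n\ge \log_2(R/r)-2$, so
\[ \mathbb{P}_v(\text{exit } B(v,R) \text{ before } K^{\#\delta})\le (1-p)^{n}\le c\,(r/R)^{c'}, \qquad c'=-\log_2(1-p). \]

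The main obstacle is the circuit construction at step one. It must produce a closed loop of rectangles lying in the annulus, fit the fixed shapes $R,R'$ with their starting and target balls, and stay uniform in $i$ and $\delta$. I would first try the standard choice of about $8$ to $12$ rectangles at scale $2^{i}r/3$ forming a square ring, using vertical rectangles $R'$ on the sides. For the chaining I would use the target ball of one rectangle as the start ball of the next, after shrinking by a constant factor so that containment holds. I would also check that the boundary killing is harmless, since it only helps the bound.
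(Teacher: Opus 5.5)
The overall architecture is right: dyadic annuli between $\mathrm{dist}(v,K)$ and $R$, a per-scale hitting probability bounded below by chaining (iii), then the strong Markov property. This is the standard proof of this Beurling-type estimate. The paper does not reprove it; it cites Lemma 4.17 of Berestycki--Laslier--Ray, which argues this way.

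\textbf{The gap.} The step ``following the circuit forces the walk's path to intersect $K$'' fails. In your chaining, the target ball of the last rectangle contains the starting ball $B^{(1)}$ of the first one. The event you bound below by $\alpha_0^{M}$ then only says that the trace goes from some $w_0\in B^{(1)}$ around the ring to some $w_M\in B^{(1)}$. Nothing makes this trace a closed curve, and a trace that is a simple arc separates nothing, because the complement of an arc in the plane is connected.

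Concretely, take $K$ to be a radial segment from $\partial B(v,r)$ to $\partial B(v,R/2)$ passing through $B^{(1)}$. It is connected and crosses every annulus. It is missed by trajectories that enter $B^{(1)}$ on one side of the segment, turn once around the ring, and end in $B^{(1)}$ on the other side. Such trajectories lie in your event. Adding rectangles so the walk turns by more than $2\pi$ does not help, since a spiral is still an arc. So the circuit event is not contained in $\{X \text{ hits } K^{\#\delta}\}$, and $p\ge\alpha_0^M$ is not justified.

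\textbf{The fix.} You need the trace to contain a closed loop in $\mathcal{A}_i$ with winding number one around $v$. The usual way to force this is to continue the chain past $B^{(1)}$ and make the last rectangle perpendicular to the first, across its middle. In the coordinates of (iii), take the first rectangle $[0,3]\times[0,1]$ and the last one $[1,2]\times[-1,2]$.
\begin{itemize}
\item A start-to-target crossing of the first rectangle contains a left-to-right crossing of $Q=[1,2]\times[0,1]$.
\item The crossing of the last rectangle contains a bottom-to-top crossing of $Q$, so the two intersect.
\item The piece of trajectory between the two visits to an intersection point is a closed curve in the ring of rectangles.
\item Each crossing stays in a rectangle of small angular width seen from $v$. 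Hence the continuous argument along this curve increases by $2\pi$ up to an error smaller than $\pi$, so its winding number is exactly $1$.
\item Then $B(v,2^ir)$ lies in a bounded complementary component and $\mathbb{C}\setminus\overline{B(v,2^{i+1}r)}$ in the unbounded one. Connectedness of $K$ alone then forces the intersection.
\end{itemize}

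This also removes your reliance on a continuous curve inside $K$ crossing $\mathcal{A}_i$. Such a curve need not exist for a general connected set: a Knaster--Kuratowski fan with apex inside $B(v,2^ir)$ meets the closed annulus in a totally disconnected set.

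With $p\ge\alpha_0^{M'}$ for the enlarged chain, the rest of your argument goes through, including the scale condition $\delta<C\,\mathrm{dist}(v,K)\,\delta_0$ and the Markov iteration. Note also that chaining around a circuit uses (iii) for crossings in all four directions; that is how the ``symmetric statement'' in (iii) has to be read.
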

	
	\begin{proof}[Proof of Theorem \ref{theorem schramm}]
		Let $Q_1^{\#\delta
		},Q_2^{\#\delta},\cdots$ be the ordering of starting points defined in the good ordering. We denote the union of the branches sampled from the vertices $Q^{\#\delta}_1, Q^{\#\delta}_2,\cdots ,Q^{\#\delta}_j$ in Wilson's algorithm by $T_j^{\#\delta}$. For $j>2$, the number of vertices in $Q_j$ is at most $d 6^j$ where $d$ is a universal constant. Notice that each vertex in $D^{\#\delta}$ is within distance $4\cdot 6^{-j}$ from a vertex in $Q_{j-1}^{\#\delta}$.  Let $j_{max}:=\lfloor \log_6 (\frac{C\delta_0 }{\delta}) \rfloor$. For $j\in [3, j_{max}] \cap \mathbb{N}$, for each vertex $w\in Q_j^{\#\delta}$, $dist(w,T_j^{\#\delta})\le 4\cdot 6^{-j}$. If we ignore the nearest vertex chosen in the cell in $Q_{j-1}^{\#\delta}$, this distance is at least $6^{-j}$, so the hypothesis in lemma \ref{beurling} is satisfied. In other words, for $j\le j_{max}$, for all $\delta < C 6^{-j} \delta_0$ and all $v\in Q_j$,
		\[ \mathbb{P}_v ( X \text{ exits } B(v,C_0 6^{-j})^{\#\delta} \text{ before hitting } T_{j-1}^{\#\delta}) \le c(\frac{dist (v,T_{j-1}^{\#\delta})}{C_0 6^{-j}})^{c'}\le c(\frac{4\cdot 6^{-j}}{C_0 6^{-j}})^{c'} , \]
		where $C_0$ is a universal constant such that $c(4/C_0)^{c'} <1/2$.
		For $v\in Q_j^{\#\delta}$, let $D(v,j)$ be the event that the random walk emanating from $v\in Q_j^{\#\delta}$ reaches distance $j^2 6^{-j}$ without hitting $T_{j-1}^{\#\delta }$. By using the Markov property, we can iteratively apply the above estimate for the walk $j^2/C_0$ times, then
		\[ \mathbb{P}(D(v,j))\le (\frac{1}{2})^{j^2/C_0}. \]
		
		When $j>j_{max}$, for $v\in Q_j^{\#\delta}$, let $D(v,j)$ be the event that the random walk emanating from $v\in Q_j^{\#\delta}$ reaches distance $j_{max}^2 6^{-j_{max}}$ without hitting $T_{j-1}^{\#\delta}$ (and in particular $T_{j_{\max}}^{\#\delta}$). Then we still have $ \mathbb{P}(D(v,j))\le (\frac{1}{2})^{j_{max}^2/C_0}$. With the bounded density assumption in Section \ref{graph}, the number of vertices in $\cup_{j>j_{max}} Q_j^{\#\delta}$ is at most $B 6^{j_{max}}$ for some constant $B$ depending only on $\delta_0$ and the constant $C_b$ in the bounded density assumption (in fact we can take $B=2\frac{C_b}{C \delta_0}$). 
		
		Now for an integer $j'$, we define an event 
		\[ D(j') :=\bigcup_{j\ge j'} \bigcup _{v\in Q_j^{\#\delta}} D(v,j). \]
		By a union bound, we have 
		\begin{align*}
			\mathbb{P}(D(j')) &\le \sum_{j'\le j\le j_{max}} d6^j (1/2)^{j^2 /C_0} + B6^{j_{max}} (1/2)^{j_{max}^2/C_0}\\
			&\le \sum_{j\ge j'} (B+d)6^j (1/2)^{j^2/C_0}.
		\end{align*}
		We take $j'=j'(\epsilon)$ such that $\sum_{j\ge j'} (B+d)6^j (1/2)^{j^2/C_0}\le \epsilon$ and $j'^26^{-j'} \le \epsilon/2$. Then with probability at least $1-\epsilon$, no random walk emanating from a vertex in $Q_j^{\#\delta}$ with $j\ge j'$ reaches distance $\epsilon/2$ without hitting $T_{j-1}^{\#\delta}$. We conclude by taking $j_0(\epsilon)=d 6^{j'+1}>\sum_{j<j'}\#Q_j^{\#\delta}$. The rest of the theorem follows immediately by considering the coupling described in Corollary \ref{loop addtion2}.
	\end{proof}
\end{appendix}

\subsection*{Acknowledgements}
The author is very grateful to Benoît Laslier for suggesting this question, for his numerous inspiring discussions, and for his help during the preparation of the manuscript. This work was supported by China Scholarship Council (CSC) program (File No. 202306340027).

	{\small \bibliographystyle{plain}
		\bibliography{references}}

\begin{thebibliography}{10}

\bibitem{berestycki2018dimersimaginarygeometry}
Nathana\"el Berestycki, Beno\^it Laslier, and Gourab Ray.
\newblock Dimers and imaginary geometry.
\newblock {\em Ann. Probab.}, 48(1):1--52, 2020.

\bibitem{berestycki2024dimersriemannsurfacesii}
Nathana\"el Berestycki, Beno\^it Laslier, and Gourab Ray.
\newblock Dimers on {R}iemann surfaces {II}: {C}onformal invariance and scaling limit.
\newblock {\em Probab. Math. Phys.}, 5(4):961--1037, 2024.

\bibitem{berestycki2024dimersriemannsurfacesi}
Nathana\"el Berestycki, Beno\^it Laslier, and Gourab Ray.
\newblock Dimers on {R}iemann surfaces {I}: {T}emperleyan forests.
\newblock {\em Ann. Inst. Henri Poincar\'e{} D}, 12(2):363--444, 2025.

\bibitem{chang2014markovloopsdiscretespaces}
Yinshan Chang and Yves Le~Jan.
\newblock Markov loops in discrete spaces.
\newblock In {\em Probability and statistical physics in {S}t. {P}etersburg}, volume~91 of {\em Proc. Sympos. Pure Math.}, pages 215--271. Amer. Math. Soc., Providence, RI, 2016.

\bibitem{kemppainen2015randomcurvesscalinglimits}
Antti Kemppainen and Stanislav Smirnov.
\newblock Random curves, scaling limits and {L}oewner evolutions.
\newblock {\em Ann. Probab.}, 45(2):698--779, 2017.

\bibitem{Lawler_Limic_2010}
Gregory~F. Lawler and Vlada Limic.
\newblock {\em Random walk: a modern introduction}, volume 123 of {\em Cambridge Studies in Advanced Mathematics}.
\newblock Cambridge University Press, Cambridge, 2010.

\bibitem{lawler2003conformalinvarianceplanarlooperased}
Gregory~F. Lawler, Oded Schramm, and Wendelin Werner.
\newblock Conformal invariance of planar loop-erased random walks and uniform spanning trees.
\newblock {\em Ann. Probab.}, 32(1B):939--995, 2004.

\bibitem{lawler2004randomwalkloopsoup}
Gregory~F. Lawler and Jos\'e{}~A. Trujillo~Ferreras.
\newblock Random walk loop soup.
\newblock {\em Trans. Amer. Math. Soc.}, 359(2):767--787, 2007.

\bibitem{lawler2003brownianloopsoup}
Gregory~F. Lawler and Wendelin Werner.
\newblock The {B}rownian loop soup.
\newblock {\em Probab. Theory Related Fields}, 128(4):565--588, 2004.

\bibitem{jan2008markovloopsrenormalization}
Yves Le~Jan.
\newblock Markov loops and renormalization.
\newblock {\em Ann. Probab.}, 38(3):1280--1319, 2010.

\bibitem{jan2010markovpathsloopsfields}
Yves Le~Jan.
\newblock {\em Markov paths, loops and fields}, volume 2026 of {\em Lecture Notes in Mathematics}.
\newblock Springer, Heidelberg, 2011.
\newblock Lectures from the 38th Probability Summer School held in Saint-Flour, 2008, \'Ecole d'\'Et\'e{} de Probabilit\'es de Saint-Flour.

\bibitem{pommerenke2013boundary}
Ch. Pommerenke.
\newblock {\em Boundary behaviour of conformal maps}, volume 299 of {\em Grundlehren der mathematischen Wissenschaften [Fundamental Principles of Mathematical Sciences]}.
\newblock Springer-Verlag, Berlin, 1992.

\bibitem{qian2026couplingbrownianloopsoups}
Wei Qian.
\newblock Coupling brownian loop soups and random walk loop soups at all polynomial scales, 2026.

\bibitem{MR3877544}
Artem Sapozhnikov and Daisuke Shiraishi.
\newblock On {B}rownian motion, simple paths, and loops.
\newblock {\em Probab. Theory Related Fields}, 172(3-4):615--662, 2018.

\bibitem{schramm1999scalinglimitslooperasedrandom}
Oded Schramm.
\newblock Scaling limits of loop-erased random walks and uniform spanning trees.
\newblock {\em Israel J. Math.}, 118:221--288, 2000.

\bibitem{sheffield2011conformalloopensemblesmarkovian}
Scott Sheffield and Wendelin Werner.
\newblock Conformal loop ensembles: the {M}arkovian characterization and the loop-soup construction.
\newblock {\em Ann. of Math. (2)}, 176(3):1827--1917, 2012.

\bibitem{suzuki2014convergencelooperasedrandom}
Hiroyuki Suzuki.
\newblock Convergence of loop erased random walks on a planar graph to a chordal {SLE}(2) curve.
\newblock {\em Kodai Math. J.}, 37(2):303--329, 2014.

\bibitem{uchiyama2017boundarybehaviourrwsplanar}
Kohei Uchiyama.
\newblock Boundary behaviour of {RW's} on planar graphs and convergence of {LERW} to chordal {SLE}$ _2$.
\newblock {\em arXiv preprint arXiv:1705.03224}, 2017.

\bibitem{werner2003slesboundariesclustersbrownian}
Wendelin Werner.
\newblock S{LE}s as boundaries of clusters of {B}rownian loops.
\newblock {\em C. R. Math. Acad. Sci. Paris}, 337(7):481--486, 2003.

\bibitem{10.1145/237814.237880}
David~Bruce Wilson.
\newblock Generating random spanning trees more quickly than the cover time.
\newblock In {\em Proceedings of the {T}wenty-eighth {A}nnual {ACM} {S}ymposium on the {T}heory of {C}omputing ({P}hiladelphia, {PA}, 1996)}, pages 296--303. ACM, New York, 1996.

\bibitem{Yadin_2011}
Ariel Yadin and Amir Yehudayoff.
\newblock Loop-erased random walk and {P}oisson kernel on planar graphs.
\newblock {\em Ann. Probab.}, 39(4):1243--1285, 2011.

\end{thebibliography}
\end{document}